\documentclass[11pt,reqno]{amsart}
\usepackage[margin = 1 in]{geometry}
\usepackage[frenchmath,defaultmathsizes]{mathastext}
\usepackage{
  hyperref,
  amsmath,
  amssymb,
  amsthm,
  thmtools,
  microtype,
  mathrsfs,
  enumitem,
  stmaryrd,
  diagbox
}
\usepackage[table]{xcolor}
\usepackage{tikz, tikz-cd}
\usepackage{cleveref}
\usepackage{tikz-cd}
\usepackage{url}
\usepackage{verbatim}
\usepackage{pgfplots} 
\pgfplotsset{width=8cm,compat=1.9} 
\usepackage{subcaption}

\setlength{\parskip}{.25em}

\usepackage{graphicx}

\linespread{1.15}

\usepackage{eucal}

\theoremstyle{plain}
\newtheorem{theorem}{Theorem}[section]
\newtheorem{proposition}[theorem]{Proposition}
\newtheorem{lemma}[theorem]{Lemma}
\newtheorem{conjecture}[theorem]{Conjecture} 
\newtheorem{corollary}[theorem]{Corollary}
\theoremstyle{definition}
\newtheorem{definition}[theorem]{Definition}

\newtheorem{example}[theorem]{Example}
\newtheorem{exercise}[theorem]{Exercise}

\newtheorem{question}[theorem]{Question}
\theoremstyle{remark}

\numberwithin{equation}{section}


\newcommand{\cO}{{\mathscr O}}
\renewcommand{\to}{{\longrightarrow}}


\ifcsname theorem\endcsname{}\else\declaretheorem[parent=section]{theorem}\fi
\ifcsname corollary\endcsname{}\else\declaretheorem[sibling=theorem]{corollary}\fi
\ifcsname lemma\endcsname{}\else\declaretheorem[sibling=theorem]{lemma}\fi
\ifcsname proposition\endcsname{}\else\declaretheorem[sibling=theorem]{proposition}\fi
\ifcsname conjecture\endcsname{}\else\fi
\ifcsname problem\endcsname{}\else\fi
\ifcsname question\endcsname{}\else\declaretheorem[sibling=theorem]{question}\fi
\ifcsname definition\endcsname{}\else\declaretheorem[sibling=theorem, style=definition]{definition}\fi
\ifcsname exercise\endcsname{}\else\fi
\ifcsname example\endcsname{}\else\fi
\ifcsname remark\endcsname{}\declaretheorem[sibling=theorem, style=remark]{remark}\fi





\providecommand{\PGL}{\operatorname{PGL}}




\providecommand{\Hom}{\operatorname{Hom}}


\numberwithin{equation}{section}

\newcommand{\OO}{\mathscr{O}}
\newcommand{\PP}{\mathbb{P}}

\DeclareMathOperator{\Mor}{Mor}

\newcommand{\smvee}{\raise0.5ex\hbox{$\scriptscriptstyle\vee$}}

\newcommand{\cM}{\mathcal{M}}
\newcommand{\cU}{\mathcal{U}}
\newcommand{\CC}{\mathbb{C}}

\newcommand{\Spec}{{\text{\rm Spec}\,}}

\newcommand{\git}{\sslash}
\newcommand{\up}{\textrm{upper}}
\newcommand{\low}{\textrm{lower}}


\title{Moduli of linear slices of high degree smooth hypersurfaces}
\author{Anand Patel, Eric Riedl, Dennis Tseng}

\begin{document}

\maketitle

\begin{abstract}
We study the variation of linear sections of hypersurfaces in $\PP^n$. We completely classify all plane curves, necessarily singular, whose line sections do not vary maximally in moduli. In higher dimensions, we prove that the family of hyperplane sections of any smooth degree $d$ hypersurface in $\PP^n$ varies  maximally for $d \geq n+3$. In the process, we generalize the classical Grauert-Mulich theorem about lines in projective space, both to $k$-planes in projective space and to free rational curves on arbitrary varieties.
\end{abstract}

\section{Introduction}
A fundamental technique for studying a degree $d$ complex hypersurface $X$ in projective space $\PP^n$ is to intersect it with hyperplanes.  The family of varieties thus obtained can be represented by a map to moduli:
\begin{align*}
    \phi: \mathbb{P}^{n*}&\dashrightarrow \PP H^0(\OO_{\PP^{n-1}}(d))\git SL_{n} \\
    [\Lambda] &\mapsto [\Lambda\cap X]
\end{align*}
 Basic properties of $\phi$ are still not understood, even under regularity assumptions on $X$.  Take, for instance, the problem of determining the dimension of its image.  If $X$ is assumed to be {\sl general}, then $\phi$ can directly be shown to have maximal rank, i.e. its image is as large as possible, as done in \cite{opstallveliche}. However, once we assume $X$ is an {\sl arbitrary} hypersurface, the story becomes more complicated, with several authors studying special cases in the last few decades.  Even in the case of a reduced plane curve $X$, showing maximal variation is not a trivial task. Thirty years ago, while studying $\PGL_3$-orbits of plane curves, Aluffi and Faber \cite[Proposition 4.2]{AF93P} cleverly exploited the classical Pl\"ucker formulas to prove that {\sl smooth} plane curves of degree at least 5 always have maximum variation of linear sections.   However, if the curve $X$ is singular, then $\phi$ can fail to have maximal rank, and Aluffi and Faber were not able to completely analyze this case.  
 
    Quite generally, if the dimension of the projective automorphism group of $X$ is larger than expected (e.g. if $X$ is a cone), then linear slices must fail to vary maximally in moduli.  Outside this class of hypersurfaces, we are unaware of any other examples where $\phi$ fails to have maximal rank, so we pose the following question: 
\begin{question}
\label{question:Automorphisms}  If $\phi$ fails to have maximal rank, must $X$ have a positive-dimenstional projective automorphism group?
\end{question}

Our first result is to answer \Cref{question:Automorphisms} affirmatively when $X \subset \PP^{2}$ is a plane curve:
\begin{theorem}
\label{thm:curveintro}
If $X\subset \PP^2_{\CC}$ is an arbitrary plane curve and if $\phi$ fails to have maximal rank, then $X$ has infinitely many projective automorphisms. 
\end{theorem}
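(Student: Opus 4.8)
First I would understand the map $\phi\colon \PP^{2*}\dashrightarrow \PP H^0(\OO_{\PP^1}(d))\git\SL_2$ sending a line $\Lambda$ to the divisor $\Lambda\cap X$, i.e. the $d$ points (with multiplicity) cut out on $\Lambda$ by the degree $d$ curve $X$, considered modulo $\PGL_2$. The failure of maximal rank means: for a general line $\Lambda$, the differential $d\phi$ at $[\Lambda]$ is not injective, equivalently there is a nonzero tangent direction at $[\Lambda]$ in $\PP^{2*}$ — a pencil of lines through a point — along which the configuration of $d$ points on $\Lambda$ moves only by a projective motion of $\Lambda$ to first order. I want to translate this into a statement that can be integrated: a general line of the family, deformed in some specified way, has its intersection with $X$ varying only via $\PGL_2$, and I want to conclude this forces infinitely many automorphisms of $X$.

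**Key steps.**

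(1) Dimension bookkeeping: the source $\PP^{2*}$ has dimension $2$; the target $\PP H^0(\OO_{\PP^1}(d))\git\SL_2$ has dimension $d-3$ for $d\geq 4$. So for $d\geq 5$ maximal rank means $\phi$ is generically finite onto its image (image of dimension $2$), and for $d=4$ it means dominant, while small degrees $d\le 3$ are degenerate and handled by hand (every line section of a conic or cubic is projectively a fixed configuration or trivially varies; one checks these have positive-dimensional automorphisms unless... — in fact a smooth conic is $\PGL_2$-homogeneous, a smooth cubic has a $1$-dimensional automorphism group, so these are consistent with the theorem). (2) The geometric heart: if $\phi$ is not generically finite, there is a curve $B\subset\PP^{2*}$ through a general point along which $\Lambda\cap X$ is constant in $\PP H^0(\OO_{\PP^1}(d))\git\SL_2$; choosing $B$ a pencil of lines through a point $p$ (or, if $\phi$ contracts a curve that is not a pencil, a $1$-parameter family of lines sweeping a region) and trivializing, we get a family of isomorphisms $\psi_b\colon \Lambda_0\to\Lambda_b$ of $\PP^1$'s, elements of $\PGL_2$, carrying $\Lambda_0\cap X$ to $\Lambda_b\cap X$. (3) Assemble these fiberwise isomorphisms into a birational self-map of $\PP^2$ preserving $X$: the union of the $\Lambda_b$ covers a surface (all of $\PP^2$ if $B$ is a pencil through $p\notin X$ generically, or a dense open otherwise), and the maps $\psi_b$ glue to a birational transformation $\Psi\colon \PP^2\dashrightarrow\PP^2$ with $\Psi(X)=X$. (4) Upgrade $\Psi$ to an honest automorphism: a birational self-map of $\PP^2$ preserving an irreducible curve $X$ of degree $\geq 4$ of sufficiently high genus, or more robustly any curve of degree $\ge 4$ that is not swept by the indeterminacy structure, must be linear — I would invoke the fact that $\Bl$ of the indeterminacy locus forces $X$ to pass through base points in a way incompatible with its degree/genus unless $\Psi\in\PGL_3$. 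This produces a $1$-parameter subgroup of $\Aut(X)\subset\PGL_3$, hence $X$ has infinitely many projective automorphisms. (5) Finally treat non-reduced and reducible $X$ separately: if $X$ has a multiple component or is a union of lines/conics through a point, it is either a cone (a point with a multiplicity structure on lines — positive-dimensional automorphisms are visible directly) or splits into pieces each of which either already has positive-dimensional automorphism group or forces $\phi$ to have maximal rank by the reduced/irreducible analysis applied componentwise together with a transversality argument.

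**Main obstacle.**

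The hard part will be step (4): turning the fiberwise $\PGL_2$-matchings into a genuine element of $\PGL_3$ rather than merely a birational or Cremona-type map, and controlling the indeterminacy. Equivalently, ruling out the scenario where $\phi$ contracts a curve "for a bad reason" — e.g. where the contracted curve in $\PP^{2*}$ is dual to a special subvariety of $X$ (flex lines, bitangents, lines through singular points) and the fiberwise identifications do not propagate to a global automorphism. I expect the resolution to go through the Plücker-style formulas in the spirit of Aluffi–Faber's argument for the smooth case: the ramification data of $\phi$ (the behavior of $\Lambda\cap X$ as $\Lambda$ becomes tangent, hyperflex, etc.) is controlled by the singularities and the class/genus of $X$, and one shows that a generic contracted direction would violate these numerical constraints unless $X$ is a cone or otherwise has a continuous automorphism group. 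Carrying this out will require a careful case analysis over the classification of singular plane curves with small automorphism-adjusted moduli, which is presumably exactly what occupies the bulk of the paper leading up to a full classification (the abstract promises a complete list).
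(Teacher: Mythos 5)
There is a genuine gap, and it sits exactly where you locate your ``main obstacle.'' Your steps (2)--(3) already have a problem: the fiberwise identifications $\psi_b\colon \Lambda_0\to\Lambda_b$ are only defined up to the automorphisms of the configuration $\Lambda_0\cap X$ and need not vary algebraically without a base change; more importantly, they map a \emph{fixed} line to the moving lines, so to get self-maps of $\PP^2$ you must organize them into a flow $\psi_{b'}\circ\psi_b^{-1}$ over the contracted curve $B$, which requires a one-parameter group structure on $B$ that you have not produced (and $B$ need not be a pencil, so the lines $\Lambda_b$ need not even sweep out $\PP^2$ in a fibered way). But the fatal issue is step (4): it is simply not true that a birational self-map of $\PP^2$ preserving a curve of degree $\ge 4$ must be linear when the curve is singular. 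The curves at stake here are precisely of this kind: for instance the unions of $\mathbb{G}_m$-orbit closures $X^pY^q=cZ^{p+q}$ (which do occur, see \Cref{integralcurve}) are preserved by large families of monomial Cremona transformations, and de Jonqui\`eres maps preserve many singular high-degree curves. So even if you succeeded in gluing a positive-dimensional family of birational self-maps preserving $X$, the conclusion ``infinitely many \emph{projective} automorphisms'' does not follow, and your proposed remedy via Pl\"ucker-formula bookkeeping is exactly the step that Aluffi--Faber could carry out only for smooth curves; the singular case is the whole content of the theorem. As written, (3)--(5) are sketches with acknowledged holes rather than a proof.

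For comparison, the paper avoids the globalization problem entirely by working infinitesimally: by \Cref{logtangent}, the tangent space to the fiber of $\phi$ at a general line $\Lambda$ is $H^0\bigl(T_{\PP^2}(-\log X)|_{\Lambda}\bigr)$, so failure of maximal rank gives a section of the restricted log tangent bundle on a general line; the generalized Grauert--M\"ulich descent (\Cref{thm-GrauertMulich}, via the Descente Lemma applied to the universal family) then produces a line subbundle $\OO_{\PP^2}(a)\subset T_{\PP^2}(-\log X)$ with $a\ge 0$, i.e.\ a global vector field on $\PP^2$ tangent to $X$ (or, for $a=1$, \Cref{O1vectorfield} forces concurrent lines). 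Integrating that vector field yields a $\mathbb{G}_m$- or $\mathbb{G}_a$-action preserving $X$ (\Cref{integralcurve}), which gives the infinite group of projective automorphisms directly and, as a bonus, the complete classification in \Cref{classification}. If you want to salvage your approach, the missing ingredient is precisely a mechanism for converting the isotriviality of the family of line sections into an \emph{infinitesimal} symmetry of the pair $(\PP^2,X)$ rather than a birational one; that is what the log tangent sheaf plus descent accomplishes.
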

Given \Cref{thm:curveintro}, we see that of all curves where $\phi$ fails to have maximal rank have stabilizer containing $\mathbb{G}_m$ or $\mathbb{G}_a$, where typical examples are depicted in \Cref{plots}.
\begin{figure}[!h]
\centering
\begin{subfigure}{.475\linewidth}
\centering
\begin{tikzpicture}
\begin{axis}[
    trig format plots=rad,
    axis equal,
    hide axis
]

\addplot [domain=-.05:1, samples=50, black, dashed] ({0}, {x});
\addplot [domain=-.05:1, samples=50, black, dashed] ({1}, {x});

\addplot [domain=-.7:5, samples=300, black] ({1/(1+x^5)}, {x^2/(1+x^5)});
\addplot [domain=-.7:0, samples=50, black] ({1/(1+x^(-5))}, {-x^(-2)/(1+x^(-5))});
\addplot [domain=-.7:5, samples=300, black] ({1/(1+x^5)}, {2*x^2/(1+x^5)});
\addplot [domain=-.7:0, samples=50, black] ({1/(1+x^(-5))}, {-2*x^(-2)/(1+x^(-5))});

\addplot [domain=-.7:5, samples=300, black] ({1/(1+x^5)}, {1.5*x^2/(1+x^5)});
\addplot [domain=-.7:0, samples=50, black] ({1/(1+x^(-5))}, {-1.5*x^(-2)/(1+x^(-5))});

\addplot [domain=-.7:5, samples=100, black] ({1/(1+x^5)}, {1.25*x^2/(1+x^5)});
\addplot [domain=-.7:0, samples=50, black] ({1/(1+x^(-5))}, {-1.25*x^(-2)/(1+x^(-5))});

\addplot [domain=-.7:5, samples=100, black] ({1/(1+x^5)}, {1.75*x^2/(1+x^5)});
\addplot [domain=-.7:0, samples=50, black] ({1/(1+x^(-5))}, {-1.75*x^(-2)/(1+x^(-5))});
\end{axis}
\end{tikzpicture}
\caption{Union of orbits under $\mathbb{G}_m$ action}
\end{subfigure}
\hfill%
\begin{subfigure}{.475\linewidth}
\begin{tikzpicture}
\begin{axis}[
    axis equal,
    hide axis
]

\addplot [domain=0:360, samples=60,black] ({sin(x)+cos(x)},{cos(x)});
\addplot [domain=0:360, samples=60,black] ({.5+.5*(sin(x)+cos(x))},{.5*(cos(x))});
\addplot [domain=0:360, samples=60,black] ({.25+.75*(sin(x)+cos(x))},{.75*(cos(x))});
\addplot [domain=0:360, samples=60,black] ({.75+.25*(sin(x)+cos(x))},{.25*(cos(x))});
\end{axis}
\end{tikzpicture}
\caption{Union of orbits under $\mathbb{G}_a$ action (quadritangent conics)}
\end{subfigure}
\caption{Singular curves for which $\phi$ fails to have maximal rank}
\label{plots}
\end{figure}
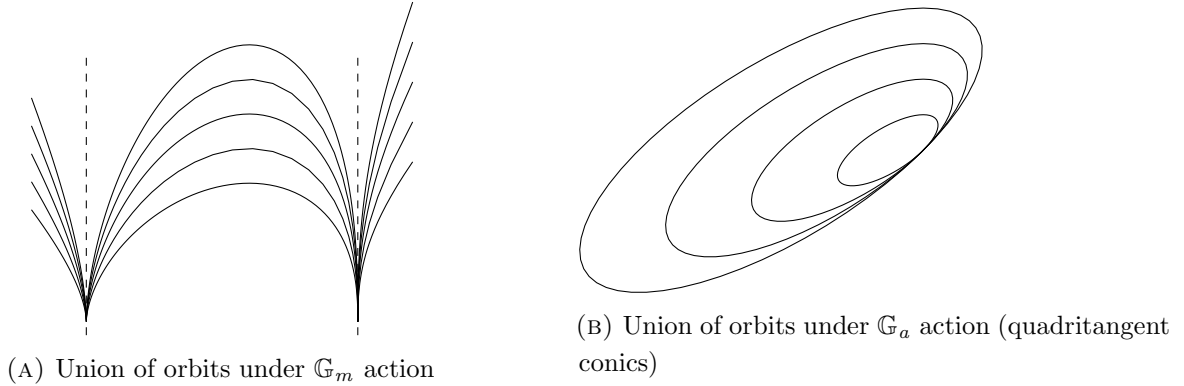
 
The map $\phi$ is even more difficult to understand for larger-dimensional hypersurfaces -- we restrict our attention primarily to smooth hypersurfaces in this paper. Beauville in \cite{beauville} investigated the case where $\phi$ is a {\sl constant map} and classified the smooth hypersurfaces $X$ for which the family of hyperplane sections has constant moduli.   This phenomenon happens only for very special hypersurfaces in positive characteristic. In contrast, we prove:

 \begin{theorem}
\label{thm:hypsliceintro}
If $X\subset \PP^n_{\CC}$ is a smooth hypersurface of degree $d\geq n+3$, then $\phi$ has maximal rank. 
\end{theorem}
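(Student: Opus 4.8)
The plan is to translate the condition that $\phi$ has maximal rank at a general point $[\Lambda]$ into a statement about the restriction of the tangent bundle $T_X$ (or a closely related bundle) to the general curve $C = \Lambda \cap X$, where $\Lambda$ ranges over a general pencil through a general point. Concretely, the differential of $\phi$ at $[\Lambda]$ fails to be injective precisely when there is a nonzero first-order deformation of the pair $(\Lambda, X)$ inside $X$ that comes from moving $\Lambda$ but is absorbed by an infinitesimal automorphism of the ambient $\PP^n$ fixing $X$ — or, more precisely, when the map from the normal space $N_{\Lambda/\PP^n}$ at $[\Lambda]$ to $H^1$ of some twisted bundle on $\Lambda \cap X$ governing the moduli of the slice has a kernel larger than the image of $\mathfrak{pgl}_{n+1}$. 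So the first step is to set up this deformation-theoretic dictionary carefully: identify the obstruction/tangent space to the moduli of $\Lambda \cap X$ as $H^1$ of a sheaf built from $T_{\Lambda\cap X}$ and the conormal of $\Lambda \cap X$ in $\Lambda$, and express $d\phi$ as a coboundary-type map out of $H^0(N_{\Lambda/\PP^n})$.

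The second step, which is the technical heart, is to control the restriction $T_{\PP^n}|_C$, or rather the bundle $E$ on $C = \Lambda \cap X$ fitting in $0 \to T_C \to T_{\PP^n}|_C \to N_{C/\PP^n} \to 0$, and to show it is as balanced/semistable as possible when $C$ is a general linear section. This is exactly where the generalized Grauert–Mülich theorem advertised in the abstract — stated for free rational curves on arbitrary varieties and for $k$-planes — should be invoked: a general line (or a general member of a free family of curves) sees the ambient bundle in its most balanced splitting type, and the semistability-type bound prevents the existence of sub-line-bundles of too-high degree in $T_X|_C$. Applying this to the family of curves cut out by hyperplanes through a general point, one extracts that $H^1$ of the relevant twist of $T_X|_C$ (or of $N_{C/\PP^n}(-1)$, the bundle whose sections correspond to moving $\Lambda$) vanishes, or at least has dimension exactly equal to what the $\PGL_{n+1}$-action contributes. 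The degree hypothesis $d \geq n+3$ is what makes the canonical twist positive enough — roughly, $K_C = \O_C(d - n - 1)\cdot(\text{something})$ via adjunction on the complete intersection curve $C$ of type $(1,\dots,1,d)$, so the relevant Euler characteristics fall on the correct side of the inequality, and the cohomology that would obstruct maximal rank is forced to vanish.

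The third step is bookkeeping: assemble the long exact sequences coming from $0 \to T_C \to T_{\PP^n}|_C \to N_{C/\PP^n} \to 0$ and from the Euler sequence restricted to $C$, feed in the vanishing from Step 2, and conclude that $\dim \ker(d\phi_{[\Lambda]})$ equals $\dim \mathfrak{pgl}_{n+1} - \dim(\text{stabilizer of } \Lambda \cap X)$, which since $X$ is smooth of degree $d \geq n+3$ has trivial (finite) automorphism group (a Matsumura–Monsky type input, or the fact that smooth hypersurfaces of large degree have no infinitesimal automorphisms), hence equals exactly $\dim \mathfrak{pgl}_{n+1} - \dim \mathfrak{pgl}_{n+1}$ after accounting for the $\PGL$ that must be quotiented in the target; i.e. $\phi$ has maximal rank. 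One must be a little careful at the very end to match the count of the $\SL_n$ acting on the hyperplane versus the $\PGL_{n+1}$ acting on $\PP^n$, but this is linear algebra.

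The main obstacle I anticipate is Step 2: proving the Grauert–Mülich-type balancedness in enough generality and then converting "balanced splitting type" into the precise cohomology vanishing needed, rather than just an inequality. General semistability of $T_X|_C$ does not immediately kill $H^1$ of a twist unless the twist is chosen optimally, and near the boundary case $d = n+3$ the estimate is tight, so one likely needs to analyze the extremal sub-bundles of $T_X|_C$ directly — showing that any destabilizing sub-line-bundle would force $X$ to be swept out by lines along which $X$ looks like a cone, contradicting smoothness and the degree bound. Handling that extremal case, and ruling out "accidental" $H^1$ contributions coming from special position of the general pencil, is where the real work will be.
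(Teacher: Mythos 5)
There is a genuine gap, and it sits exactly where you anticipated: your Step 2 is not carried out, and as sketched it cannot be. A Grauert--M\"ulich statement (including the paper's generalizations, \Cref{thm-GrauertMulich} and \Cref{lem:slopesOfRestrictionsTokPlanes}) does not \emph{prove} that the restriction of the relevant sheaf to a general linear section is balanced; it only converts semistability of a sheaf on the ambient $\PP^n$ into near-balancedness of its restriction, or else produces a destabilizing subsheaf upstairs. So the whole argument hinges on knowing, beforehand, that the right sheaf on $\PP^n$ is semistable. In the paper that sheaf is the log tangent sheaf $T_{\PP^n}(-\log X)$, whose semistability for smooth (indeed log-canonical) $X$ of degree $d\geq n+2$ is imported from Guenancia's theorem (\Cref{theorem:Guenancia}); nothing in your proposal supplies this input, and your fallback plan (``any destabilizing sub-line-bundle would force $X$ to be swept out by lines\dots'') is precisely the unproved hard step, not a routine verification. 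Relatedly, you are working with the wrong object on the wrong space: by \Cref{logtangent}, the kernel of $d\Phi$ at $[\Lambda]$ is $H^0\bigl(\Lambda,\,T_{\PP^n}(-\log X)|_{\Lambda}\bigr)$, i.e.\ sections on the hyperplane $\Lambda$ itself, not an $H^1$ on the slice $\Lambda\cap X$ built out of $T_X$, $T_{\Lambda\cap X}$ and normal bundles. Note also that $\Lambda\cap X$ is an $(n-2)$-dimensional hypersurface in $\Lambda\cong\PP^{n-1}$, not a curve, so adjunction on a ``complete intersection curve of type $(1,\dots,1,d)$'' and the free-rational-curve form of Grauert--M\"ulich are not the relevant tools here; one needs the $k$-plane version, in which consecutive Harder--Narasimhan slopes of the restriction drop by at most $1/k$.

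The second missing ingredient is the numerical step that actually produces the bound $d\geq n+3$. Even granting semistability and the $k$-plane Grauert--M\"ulich, one must rule out a global section of $T_{\PP^n}(-\log X)|_{\Lambda}$, a restriction of a semistable sheaf of total degree $n+1-d<0$ to a hyperplane; a section is a subsheaf of nonnegative slope of the restriction, and because the HN slopes may climb back up in increments of $1/(n-1)$, one needs the combinatorial control the paper encodes in $B_k(n)$ (with $B_{n-1}(n)=1$ by \Cref{prop:k2n/3}) to conclude via \Cref{thm:Bbound} that a section forces $n+1-d\geq -1$, contradicting $d\geq n+3$. Your plan of ``choose the twist optimally and kill $H^1$'' does not engage with this mechanism, which is exactly why the threshold is $n+3$ rather than $n+2$ and why you correctly sensed the boundary case is tight. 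Finally, the closing bookkeeping with $\dim\operatorname{PGL}_{n+1}$ minus stabilizers is not needed in the paper's route: one works with the parametrized space $\Mor_1(\PP^{n-1},\PP^n)$ and the unquotiented target $\PP H^0(\OO_{\PP^{n-1}}(d))$, where maximal rank is just generic finiteness and is equivalent to the vanishing of $H^0$ of the restricted log tangent sheaf; the descent to the GIT quotient is handled separately and does not require an automorphism count in the main argument.
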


We can also intersect a hypersurface $X$ with $k$-planes for smaller $k$, obtaining natural analogues
\begin{align*}
    \phi_k: \mathbb{G}(k,n)&\dashrightarrow \PP H^0(\OO_{\PP^{k}}(d))\git SL_{k+1} \\
    [\Lambda] &\mapsto [\Lambda\cap X]
\end{align*}
and ask similar questions about $\phi_k$.   Harris, Mazur, and Pandharipande \cite{HMP95}, and then later Starr \cite{starr}, studied the situation where $\phi_k$ is expected to be dominant, relating the problem of establishing dominance to the question of unirationality of low degree hypersurfaces. When $\phi_{k}$ is expected to be generically finite and dominant, the problem of establishing its degree has also appeared in the literature.  In this direction, see \cite{CL08,FML,LPT19}.
 
We are able to generalize \autoref{thm:hypsliceintro}, and prove that $\phi_{k}$ has maximal rank under some restrictions on $k$:

\begin{theorem}
\label{thm:hypsliceintro2}
If $X\subset \PP_{\CC}^n$ is a smooth hypersurface of degree $d$, then $\phi_{k}$ has maximal rank assuming $d\geq n+3$ and $k\geq \frac{2}{3}n$. 
\end{theorem}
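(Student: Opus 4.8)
The plan is to reduce the maximal-rank question for $\phi_k$ to an infinitesimal statement about the differential $d\phi_k$ at a general point $[\Lambda]\in\G(k,n)$, and then to analyze that differential using a Grauert–Mülich-type rigidity result for $k$-planes (established earlier in the paper). Concretely, at a general $\Lambda$ the intersection $Y=\Lambda\cap X\subset\Lambda\cong\PP^k$ is a smooth degree-$d$ hypersurface, so the moduli space $\PP H^0(\OO_{\PP^k}(d))\git\SL_{k+1}$ is smooth near $[Y]$ of the expected dimension $\binom{d+k}{k}-1-(k+1)^2+1$ (using that $Y$ has finite automorphisms when $d\ge n+3\ge k+3$). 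The tangent space to moduli at $[Y]$ is $H^0(\OO_Y(d))$ modulo the image of $H^0(T_{\PP^k})$, and $d\phi_k$ is the natural composite
\[
T_{[\Lambda]}\G(k,n)=H^0(N_{\Lambda/\PP^n})=H^0(\OO_\Lambda(1))^{\oplus(n-k)}\;\longrightarrow\; H^0(N_{Y/\Lambda})=H^0(\OO_Y(d))\;\longrightarrow\; H^0(\OO_Y(d))/\,\mathrm{im}\,H^0(T_{\PP^k}),
\]
where the first arrow is restriction of the defining equation's normal data. So "maximal rank" means this composite is injective or surjective according to the numerics.

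The key steps, in order: (1) Record the expected dimension count, i.e. compare $\dim\G(k,n)=(k+1)(n-k)$ with $\dim\big(\PP H^0(\OO_{\PP^k}(d))\git\SL_{k+1}\big)$, and split into the "expected injective" range and the "expected surjective" range; the hypothesis $k\ge\tfrac23 n$ should be exactly what makes one of these two cases controllable (it forces $n-k\le \tfrac12 n$, so the source is not too big). (2) Identify $\ker d\phi_k$ with the space of first-order deformations of $\Lambda$ inside $\PP^n$ that, after restriction to $X$, are realized by an ambient $\PSL_{k+1}$-motion of $Y$ inside $\Lambda$ — that is, deformations of $\Lambda$ that "move along $X$ up to reparametrization." (3) Apply the generalized Grauert–Mülich theorem to the restricted tangent (or normal) bundle of a general $\Lambda$: a general $k$-plane behaves like a general free rational curve, so the relevant bundle $N_{\Lambda/\PP^n}\otimes\OO_Y$ (or the kernel sheaf) has the expected, balanced splitting type, which bounds its sections. (4) Combine a Koszul/restriction exact sequence relating $H^0$ on $\Lambda$ and on $Y=\Lambda\cap X$ with the vanishing $H^1(\OO_\Lambda(1-d))=\cdots=0$ (automatic since $d\ge k+3>2$) to show the restriction map $H^0(N_{\Lambda/\PP^n})\to H^0(N_{\Lambda/\PP^n}|_Y)$ is an isomorphism, so that the only possible kernel of $d\phi_k$ comes from the $\SL_{k+1}$-reparametrizations; finiteness of $\Aut(Y)$ then kills it, giving injectivity in the injective range. (5) In the surjective range, dualize: show the transpose map is injective, which again reduces to a statement that no nonzero section of the appropriate twist of the conormal bundle of $Y$ in $\PP^k$ is "Euler-exact," and feed in Grauert–Mülich plus the degree bound $d\ge n+3$.

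The main obstacle I expect is Step (3)–(5): making the Grauert–Mülich input genuinely effective. It is not enough to know the restricted bundle is semistable of the right slope; we need the precise splitting type (balanced up to $1$) to control $h^0$ and $h^1$ of specific twists, and we need this for a \emph{general} $k$-plane, not a general free curve — so one must check that a general $k$-plane in $\PP^n$ is "general enough" as a subvariety for the free-curve version of the theorem to apply (e.g. via a bend-and-break / deformation-to-a-chain-of-lines argument, or directly since $\Hom$ schemes of $k$-planes are well understood). The numerical hypothesis $k\ge\tfrac23 n$ is presumably dictated precisely by where this splitting-type control, combined with the cohomology vanishing needed in Step (4), can be pushed through; handling the boundary values of $k$ and keeping track of the $(k+1)^2-1$ correction from $\PSL_{k+1}$ in the dimension count will be the fiddly part.
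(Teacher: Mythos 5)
There is a genuine gap: your argument never brings in any global information about how $X$ sits inside $\PP^n$, and that is exactly what the proof needs. At a general $[\Lambda]$, the kernel of $d\phi_k$ is the preimage, under the map $H^0(\OO_\Lambda(1))^{\oplus(n-k)}\to H^0(\OO_Y(d))$ given by differentiating the equation of $X$ in the normal directions of $\Lambda$, of the image of $H^0(T_\Lambda)\to H^0(\OO_Y(d))$. Knowing that $H^0(N_{\Lambda/\PP^n})\to H^0(N_{\Lambda/\PP^n}|_Y)$ is injective says nothing about this preimage, so the conclusion in your Step (4) that ``the only possible kernel comes from $\SL_{k+1}$-reparametrizations'' does not follow. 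Similarly, Step (3) applies Grauert--M\"ulich to $N_{\Lambda/\PP^n}|_Y$ or a kernel sheaf built from it, but $N_{\Lambda/\PP^n}\cong\OO_\Lambda(1)^{\oplus(n-k)}$ is already split and balanced and carries no information about $X$; no splitting-type control on these bundles can see the hypotheses ``$X$ smooth'' and ``$d\geq n+3$,'' which in your sketch enter only through the slice $Y$. Without a positivity or semistability input on a sheaf attached to the pair $(\PP^n,X)$, nothing rules out a special smooth $X$ for which the fibers of $\phi_k$ are positive-dimensional.

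The paper supplies precisely that missing ingredient. By \Cref{logtangent} the tangent space to the fiber of the map to moduli (in its $\operatorname{Mor}_1(\PP^k,\PP^n)$ incarnation, which is equivalent to $\phi_k$ having maximal rank since $\Aut(Y)$ is finite) is $H^0(T_{\PP^n}(-\log X)|_\Lambda)$, the restricted log tangent sheaf --- an object absent from your proposal. Guenancia's theorem (\Cref{theorem:Guenancia}) gives semistability of $T_{\PP^n}(-\log X)$, of degree $n+1-d$, once $d\geq n+2$; the $k$-plane Grauert--M\"ulich statement (\Cref{lem:slopesOfRestrictionsTokPlanes}), proved directly via the descente lemma for the universal $k$-plane using $T_{\cU/\PP^n}|_\Lambda\cong\Omega_\Lambda(1)^{\oplus(n-k)}$ rather than by degenerating to free rational curves, bounds the drops between consecutive Harder--Narasimhan slopes of the restriction by $1/k$ (for $k\geq 2$ there is no splitting-type statement at all); and a purely combinatorial estimate, $B_k(n)=1$ for $k\geq\frac{2}{3}n$ (\Cref{prop:k2n/3}, fed into \Cref{thm:Bbound}), shows that a nonzero section of the restriction would force $n+1-d\geq -1$, contradicting $d\geq n+3$. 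In particular the threshold $k\geq\frac{2}{3}n$ is not where some cohomology vanishing on $\Lambda$ breaks down, as you guessed in Step (4)/(5); it is where the combinatorics of admissible slope sequences pins $B_k(n)$ down to $1$. Your reduction to the differential and the bookkeeping with finite $\Aut(Y)$ agree with the paper's \S 2.2, but without the log tangent sheaf and its semistability your Steps (3)--(5) cannot be completed.
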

For $k<\frac{2}{3}n$, we obtain similar statements, but with $d$ forced to be larger (see \Cref{thm:knboundslice} and \Cref{thm:smallkslice}). 

Broadening the topic even further, we can intersect $X$ with other types of varieties, for example rational curves of degree $e$.  In this way, we obtain a map from the variety of degree $e$ rational curves in $\PP^{n}$ to the moduli space of $ed$ points on $\PP^{1}$.  Our methods also provide results in this context -- see \Cref{classification} and \Cref{theorem:degEcurvesLargen}.

\subsection{Methods}
The log tangent sheaf $T_{\PP^n}(-\log X)$ and the Grauert-Mulich theorem play key roles in our approach. We identify the tangent space of the fiber of $\phi$ at a point $[\Lambda]$ with sections of the log tangent sheaf $T_{\PP^n}(-\log X)$ restricted to $\Lambda$. Then, we adapt the argument in the usual Grauert-Mulich theorem \cite[Theorem 2.1.4]{OSS80} to produce sections or subsheaves of the log tangent sheaf $T_{\PP^n}(-\log X)$. In the plane curve case, this forces $X$ to be an integral curve for a vector field on $\PP^2$,  leading to the classification in \Cref{classification}. In the higher dimensional case, we appeal to a result of Guenancia regarding the semistability of $T_{\PP^n}(-\log X)$, when $(\PP^n, X)$ is a log-canonical pair and $d\geq n+2$. In particular, all our results in this case actually hold when $(\PP^n, X)$ is a log-canonical pair, not only when $X$ is smooth. 

Our methods will produce results in other contexts, for example if we replace $\PP^n$ with a homogeneous space $G/P$. 

\subsection{Acknowledgements}
The authors would like to thank Paolo Aluffi, Izzet Coskun, Carel Faber and Joe Harris for helpful conversations.

\section{Preliminaries}
In this section, we introduce conventions and basic definitions.
\subsection{Notation and Conventions} 
We will work over the complex numbers.  We identify vector bundles with locally free sheaves throughout, and all our sheaves are coherent.  A {\sl sub-bundle} of a vector bundle $V$ is a locally free subsheaf $W \subset V$ such that $V/W$ is also locally-free. For us, a variety is an integral scheme of finite type.  If $F$ is a coherent sheaf on a scheme $X$, we denote by $\operatorname{ev}: H^{0}(X,F) \otimes \OO_{X} \to F$ the natural evaluation map.

We denote by $\Mor_{e}(\PP^{k},\PP^{n})$ the variety parametrizing morphisms $f: \PP^{k} \to \PP^{n}$ with $f^{*}\OO(1) = \OO(e)$.  Explicitly, $\Mor_{e}(\PP^{k},\PP^{n})$ is a Zariski open subset of $\PP(H^{0}(\OO_{\PP^{k}}(e))^{\oplus n+1})$ parametrizing tuples $(A_{0}, \dots, A_{n})$ of homogeneous degree $e$ forms on $\PP^{k}$ which do not vanish simultaneously anywhere on $\PP^{k}$. More generally, $\Mor(X,Y)$ denotes the (not finite-type) scheme parameterizing morphisms from the scheme $X$ to the scheme $Y$. 

Given a torsion-free sheaf $E$ on a projective variety $X$, we let its slope $\mu(E)$ denote the ratio $\frac{\deg(E)}{\operatorname{rank}(E)}$, where $\deg(E) = \int_{X}c_1(E)\OO_X(1)^{\dim(X)-1}$. We call $E$ semistable (respectively stable) if there is no proper subsheaf $F$ with $\mu(F)>\mu(E)$ (respectively $\mu(F)\geq \mu(E)$). In general, the \emph{Harder-Narasimhan} filtration of $E$ is
\begin{align*}
    0=E_0\subsetneq E_1\subsetneq E_2\cdots \subsetneq E_a=E,
\end{align*}
where the subquotients $E_1/E_0,E_2/E_1,\ldots,E_a/E_{a-1}$ are semistable and have strictly decreasing slopes. 

\subsection{The map to moduli $\Phi$}\label{subsection:maptomoduli}
Suppose $X \in \PP^{n}$ is a degree $d$ hypersurface.  After fixing integers $e \geq 1, k \leq n-1$, we get the induced {\sl map to moduli}
\begin{align*}
    \Phi: \operatorname{Mor}_e(\PP^k,\PP^n)&\dashrightarrow \PP H^0(\PP^k,\OO_{\PP^k}(de))\\
    \iota &\mapsto [\iota^{-1}(X)].
\end{align*}
We say that $\Phi$ has maximal rank  if the dimension of its image is $\max\{\dim(\operatorname{Mor}_e(\PP^k,\PP^n)),\PP H^0(\PP^k,\OO_{\PP^k}(de))\}$. Equivalently, since we are working over $\mathbb{C}$, the derivative of $\Phi$ at a general point has maximum rank. 

Though our methods give results for all $e,k$, we are primarily interested in the cases where $e=1$ or $k=1$.  Therefore, we have only stated our results in these cases. In the case $e=1$, $\Phi$ having maximal rank is equivalent to the map
\begin{align*}
    \mathbb{G}(k,n)&\dashrightarrow \PP H^0(\PP^k,\OO_{\PP^k}(d))\git SL_k\\
    [\Lambda]&\mapsto [\Lambda\cap X]
\end{align*}
having maximal rank, assuming the general $k$-plane slice of $X$ has no infinitesimal automorphisms. Whenever our results apply, this condition will always be satisfied. 
\subsection{Log tangent sheaves}
We now introduce the main tool of the paper. We suspect \Cref{logtangent} is well-known to experts but include a proof for want of a suitable reference. Everything in this section should work for a reduced divisor in an arbitrary smooth ambient variety, but we will focus on the case that the ambient variety is projective space in this paper. 

Let $D\subset \mathbb{P}^n$ be a reduced hypersurface. Viewing $D$ as a divisor in the smooth ambient variety $\mathbb{P}^n$, we get the log tangent sheaf $T_{\mathbb{P}^n}(-\log D)$, which sits inside the exact sequence
\begin{align*}
    0\to T_{\mathbb{P}^n}(-\log D)\to T_{\mathbb{P}^n}\to \mathscr{O}_D(D)\to \mathscr{O}_{D_{{\rm sing}}}(D)\to 0,
\end{align*}
where $D_{{\rm sing}}$ is the singular subscheme cut out of $\mathbb{P}^n$ by the equation for $D$ and its partials. In terms of background, we  only assume what is covered in \cite[2.1.2]{Xia93}, but see \cite{S80} for the original reference. One can check that $T_{\mathbb{P}^n}(-\log D)$ is a vector bundle when $D$ is smooth using local coordinates; in general $T_{\mathbb{P}^n}(-\log D)$ is a reflexive sheaf. 

Informally, local sections of $T_{\mathbb{P}^n}(-\log D)$ represent local vector fields which are tangent to $D$. This can be seen explicitly by noting that the map $T_{\mathbb{P}^n}\to \cO_D(D)$ in the exact sequence above is given by $\theta\mapsto \theta(f)$, where $\theta$ is a vector field and $f$ is the (local) equation for $D$. If we identify $\cO_D(D)$ with $N_{D/\mathbb{P}^n}$, the map $T_{\mathbb{P}^n}\to \cO_D(D)$ is also $T_{\mathbb{P}^n}\to T_{\mathbb{P}^n}|_{D}\to N_{D/\mathbb{P}^n}$.

Let $\mathbb{P}^k\xrightarrow{\iota} \mathbb{P}^n$ be a map defined by degree $e$ homogeneous forms, and suppose $Z \subset \mathbb{P}^{n}$ is a subscheme.  We say an infinitesimal deformation $\iota_{\epsilon} : \mathbb{P}^{k}\times\Spec \mathbb{C}[\epsilon]/(\epsilon^{2}) \to \mathbb{P}^{n}$ {\sl preserves $\iota^{-1}(Z)$} if $\iota_{\epsilon}^{-1}(Z) \subset \mathbb{P}^{k}\times\Spec \mathbb{C}[\epsilon]/(\epsilon^{2})$ is the trivial deformation $\iota^{-1}(Z)\times \Spec \mathbb{C}[\epsilon]/(\epsilon^{2})$.
The point of this section is to prove the following lemma.
\begin{lemma}
\label{logtangent}
Let $\mathbb{P}^k\xrightarrow{\iota} \mathbb{P}^n$ be a map defined by degree $e$ homogeneous forms whose image is not contained in $D$. Global sections of $\iota^{*}T_{\mathbb{P}^n}(-\log D)$ correspond to deformations of the map $\iota$ preserving the hypersurface $\iota^{-1}(D)$. 
\end{lemma}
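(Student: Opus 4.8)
The plan is to identify both sides of the claimed correspondence with concrete spaces of data attached to the map $\iota$, and then match them. First I would recall that a first-order deformation $\iota_\epsilon$ of $\iota$ is classified by a global section of $\iota^* T_{\PP^n}$: pulling back a local chart of $\PP^n$, the deformed map is $\iota + \epsilon\, s$ for $s \in H^0(\PP^k, \iota^*T_{\PP^n})$ (one must check, using the description of $\Mor_e(\PP^k,\PP^n)$ as an open subset of $\PP(H^0(\OO_{\PP^k}(e))^{\oplus n+1})$ given in the Preliminaries, that every such $s$ is actually realized by an honest deformation inside $\Mor_e$, not just formally — this is automatic since $\Mor_e$ is smooth, being open in a projective space). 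So the right-hand side is a subspace of $H^0(\iota^*T_{\PP^n})$, and the task is to show that subspace is exactly the image of $H^0(\iota^*T_{\PP^n}(-\log D)) \to H^0(\iota^*T_{\PP^n})$ — equivalently, since pulling back the defining sequence of the log tangent sheaf along $\iota$ and taking global sections is left exact, that $s$ preserves $\iota^{-1}(D)$ if and only if $s$ lies in the kernel of the induced map $\iota^*T_{\PP^n} \to \iota^*\cO_D(D)$ on global sections.

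The key computation is local and is exactly the one already highlighted in the text: the map $T_{\PP^n} \to \cO_D(D)$ sends a vector field $\theta$ to $\theta(f)$, where $f$ is a local equation for $D$. Working in an affine chart where $X = \iota^{-1}(D)$ is cut out by $g = \iota^\# f = f \circ \iota$, a first-order deformation $\iota_\epsilon$ corresponding to $s$ pulls $f$ back to $g + \epsilon\, s(f) = g + \epsilon\, ds(\ldots)$ — more precisely, $\iota_\epsilon^\# f = g + \epsilon\, \langle df, s\rangle$ where $\langle df, s\rangle$ is the image of $s$ under $\iota^*T_{\PP^n} \to \iota^*\cO_D(D)$, interpreted as a section over the locus where $g$ vanishes. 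The deformation $\iota_\epsilon^{-1}(D)$ is then cut out by $g + \epsilon\,\langle df,s\rangle$ in $\PP^k \times \Spec \CC[\epsilon]/(\epsilon^2)$, and this is the trivial deformation $X \times \Spec\CC[\epsilon]/(\epsilon^2)$ precisely when $\langle df, s\rangle$ is a multiple of $g$, i.e. vanishes as a section of $\cO_X$, i.e. vanishes in $\iota^*\cO_D(D) = \cO_X(\iota^{-1}(D))$ (here using that $D$ is reduced so that $f$ generates the ideal sheaf locally and the scheme structure on $\iota^{-1}(D)$ is the one cut out by $g$). Gluing these local statements over a cover of $\PP^k$ gives: $s$ preserves $\iota^{-1}(D)$ iff the image of $s$ in $H^0(\iota^*\cO_D(D))$ is zero.

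To finish, I would assemble the diagram: applying $\iota^*$ to $0 \to T_{\PP^n}(-\log D) \to T_{\PP^n} \to \cO_D(D) \to \cO_{D_{\mathrm{sing}}}(D) \to 0$ and taking $H^0$, the hypothesis that $\iota(\PP^k) \not\subset D$ ensures $\iota^{-1}(D)$ is a genuine (codimension one) divisor on $\PP^k$, so the pullback of the relevant terms behaves well and $H^0(\iota^*T_{\PP^n}(-\log D))$ is exactly the kernel of $H^0(\iota^*T_{\PP^n}) \to H^0(\iota^*\cO_D(D))$; combined with the previous paragraph this kernel is the space of deformations preserving $\iota^{-1}(D)$, which is the assertion.

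The main obstacle I anticipate is bookkeeping around $\iota^*$ and reflexivity: $T_{\PP^n}(-\log D)$ is only reflexive (not locally free) when $D$ is singular, and pulling back along $\iota$ need not be exact, so I want to argue directly with the defining four-term sequence and the explicit $\theta \mapsto \theta(f)$ description rather than invoking formal exactness — i.e. verify on an open cover that a section of $\iota^*T_{\PP^n}$ killed by the map to $\iota^*\cO_D(D)$ locally lifts to a section of $\iota^*T_{\PP^n}(-\log D)$, which is where one genuinely uses that $\iota(\PP^k)$ meets $D$ properly. A secondary but routine point is the chart-independence of the identification "deformation of $\iota$ $\leftrightarrow$ section of $\iota^*T_{\PP^n}$," which is standard deformation theory of maps with fixed source and target.
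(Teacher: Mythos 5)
Your proposal is correct and follows essentially the same route as the paper: identify first-order deformations of $\iota$ with sections of $\iota^{*}T_{\PP^n}$, expand the pullback of the defining equation of $D$ to first order, and characterize triviality of the induced deformation of $\iota^{-1}(D)$ as divisibility by that pulled-back equation, which is exactly the pulled-back log-tangent condition $\theta(F)\equiv 0 \pmod F$. The only cosmetic difference is that the paper performs this computation globally in homogeneous coordinates via the Euler sequence, whereas you work in local charts and glue.
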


\begin{proof}
First, sections of $\iota^{*}T_{\mathbb{P}^n}$ correspond to deformations of $\iota$. More explicitly, $\iota$ is defined by an $n+1$ tuple of degree $e$ forms in $k+1$ variables $A_0(s_0,\ldots,s_k),\ldots,A_n(s_0,\ldots,s_k)$ sending $[s_0:\cdots:s_k]$ to $[A_0(s_0,\ldots,s_k):\cdots:A_n(s_0,\ldots,s_k)]$. 

A deformation $\iota_{\epsilon}$ of $\iota$ is given by another $n+1$ tuple of degree $e$ forms in $k+1$ variables $B_0(s_0,\ldots,s_k),\ldots,B_n(s_0,\ldots,s_k)$. Explicitly, as a map from $\operatorname{Spec}(\mathbb{C}[\epsilon]/(\epsilon^2))\times \mathbb{P}^k\to \mathbb{P}^n$ this is given in coordinates by $\epsilon,[s_0,\ldots,s_k]$ mapping to $[A_0(s_0,\ldots,s_k)+\epsilon B_0(s_0,\ldots,s_k):\cdots:A_n(s_0,\ldots,s_k)+\epsilon B_n(s_0,\ldots,s_k)]$. The vector space of deformations is given by the quotient space of $n+1$-tuples of degree $e$ forms $(B_0,\ldots,B_{n})$ modulo the 1-dimensional vector space generated by $(A_0,\ldots,A_{n})$. 

Let $D$ be defined by $F=0$ where $F$ is a reduced homogenous form in $n+1$ variables. If we pull back the form $F$ under the deformed map  $\iota_{\epsilon}$, we obtain
\begin{align*}
    F(A_0(s_0,\ldots,s_k)+\epsilon B_0(s_0,\ldots,s_k),\ldots,A_n(s_0,\ldots,s_k)+\epsilon B_n(s_0,\ldots,s_k))&=\\
    F(A_0(s_0,\ldots,s_k),\ldots,A_n(s_0,\ldots,s_k))+&\\\epsilon \sum_{i=0}^{n}B_i(s_0,\ldots,s_k) \cdot \partial_iF (A_0(s_0,\ldots,s_k),\ldots,A_n(s_0,\ldots,s_k))&. 
\end{align*}
Therefore, deformations $\iota_{\epsilon}$ that preserve $\iota^{-1}(D)$ correspond to choices of $B_0,\ldots,B_n$ such that
\begin{align}
\label{multiple}
    \sum_{i=0}^{n}B_i(s_0,\ldots,s_k)\cdot \partial_iF (A_0(s_0,\ldots,s_k),\ldots,A_n(s_0,\ldots,s_k))
\end{align}
is a scalar multiple of $F$. 

Now, we wish to realize this latter condition as producing sections of the pulled back log tangent sheaf. First, the sections of the pulled back tangent sheaf $\iota^{*}T_{\mathbb{P}^n}$ can be computed via the Euler sequence
\begin{align*}
    0\to \mathscr{O}_{\mathbb{P}^k}\to \mathscr{O}_{\mathbb{P}^k}(e)^{n+1}\to \iota^{*}T_{\mathbb{P}^n}\to 0
\end{align*}
to be the quotient space of $n+1$-tuples of linear forms $(B_0,\ldots,B_{n})$ modulo the 1-dimensional vector space generated by $(A_0,\ldots,A_{n})$. 

The restricted vector field corresponding to $(B_0,\ldots,B_{n})$ is $\sum_{i=0}^{n}B_i\frac{\partial}{\partial x_i}$. Recall that $T_{\mathbb{P}^n}(-\log D)$ is the kernel of the map $T_{\mathbb{P}^n}\to \mathscr{O}_D(D)$ sending a vector field $\theta:=\sum_{i=0}^{n}B_i\frac{\partial}{\partial x_i}$ to $\theta(F)$. 

In other words, the defining equation is 
\begin{align*}
    \sum_{i=0}^{n}B_i\frac{\partial}{\partial x_i}F \equiv 0 \pmod{F}.
\end{align*}
Pulling back this under $\iota$ yields exactly \eqref{multiple}. 
\end{proof}

\section{Grauert-Mulich}
\label{sec:Grauert-Mulich}
The goal of this section is to generalize the classical Grauert-Mulich theorem \cite[Theorem 2.1.4]{OSS80} in two directions:

\begin{proposition}
\label{thm-GrauertMulich}
Let $Z\subset \mathbb{P}^N$ be a smooth projective variety and $f: \mathbb{P}^1\to Z$ be a general map in an open subset $\cM$ of $\operatorname{Mor}(\mathbb{P}^1, Z)$ such that $\PP^1\times \cM\to Z$ has connected fibers. Suppose $f^{*}T_Z$ is globally generated. 

Let $E$ be a torsion free sheaf on $Z$ and write $f^{*}E$ as $\bigoplus_{i=1}^b \cO(a_i)$ with $a_1\geq \cdots \geq a_b$. If $a_j>a_{j+1}+1$ for some $i$, then $E$ has a subsheaf of rank $i$ and degree $\frac{1}{\deg(f)}\sum_{i=1}^ja_i$. In particular, if $E$ is semistable, then the bundle $f^{*}E$ can be written as $\bigoplus_i \cO(a_i)$ with $|a_i - a_{i+1}| \leq 1$.
\end{proposition}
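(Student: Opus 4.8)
The plan is to mimic the proof of the classical Grauert–Mülich theorem as in \cite[Theorem 2.1.4]{OSS80}, but replacing the "lines through a point" incidence variety with the universal family of rational curves in the open set $\cM$. Consider the incidence correspondence $p \colon \PP^1 \times \cM \to Z$ together with the second projection $q \colon \PP^1 \times \cM \to \cM$. By hypothesis $p$ has connected fibers, and since $f$ is general with $f^*T_Z$ globally generated (hence $f$ is free), $p$ is dominant and smooth over a dense open subset of $Z$. Relative to $q$, the restriction of $E$ to each fiber $\PP^1 \times \{m\}$ has, for general $m$, a fixed splitting type $(a_1 \geq \cdots \geq a_b)$; let $j$ be such that $a_j > a_{j+1} + 1$. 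On each general fiber the subbundle $F_m \subset p^*E|_{\PP^1\times\{m\}}$ generated by the summands $\cO(a_1),\dots,\cO(a_j)$ is canonically defined (it is the $j$-th step of the Harder–Narasimhan-type filtration determined by the jump in degrees, so it does not depend on the choice of splitting), and these glue to a subsheaf $\mathscr{F} \subset p^*E$ over the open locus where the splitting type is generic.

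The key step is then to descend $\mathscr{F}$ along $p$: I would show that the image of $\mathscr{F}$ under the natural map defining $p^*E$ is pulled back from $Z$, i.e. there is a subsheaf $E' \subset E$ with $p^*E' = \mathscr{F}$ generically. This is where the two main inputs enter. First, connectedness of the fibers of $p$ forces any such "relative flag" to be constant along fibers — on a connected fiber $P$ the restriction $p^*E|_P$ is trivial (being $f^*E$ for $f$ contracting $P$... more precisely one argues with the rigidity of the generic splitting type under deformation within a fiber), so $\mathscr{F}|_P$ is a fixed subspace of the fiber of $E$ and descends set-theoretically. Second, the numerical gap $a_j > a_{j+1}+1$ is exactly what rules out the "coordinate" deformations coming from moving the curve: if $\mathscr{F}$ did not descend, differentiating in the $\cM$-direction would produce a nonzero $\cO(-1)$ or $\cO(0)$ class bounding the jump by $1$, contradicting the gap. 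This is precisely the mechanism in the original Grauert–Mülich argument, now run over $\cM$ instead of over a pencil of lines.

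Once $\mathscr{F}$ descends to $E' \subset E$, compute its rank and degree: the rank is $j$ by construction, and $\deg(E') = \frac{1}{\deg f}\deg(f^*E') = \frac{1}{\deg f}\sum_{i=1}^j a_i$, using that $f^*E' = \mathscr{F}|_{\PP^1\times\{m\}} = \bigoplus_{i=1}^j \cO(a_i)$ for general $m$ and that pullback along $f$ multiplies degrees by $\deg f$. The final assertion about semistable $E$ is immediate: if some $|a_i - a_{i+1}| \geq 2$, pick the largest such gap at position $j$; then $E'$ has slope $\frac{1}{j\deg f}\sum_{i=1}^j a_i > \frac{1}{b \deg f}\sum_{i=1}^b a_i = \mu(E)$ (the strict inequality because the average of the first $j$ of a weakly decreasing sequence exceeds the total average when there is a strict drop), contradicting semistability; hence all gaps are $\leq 1$.

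The main obstacle I anticipate is the descent step — making rigorous that $\mathscr{F}$, defined only over the generic locus of $Z$ and a priori only as a subsheaf of $p^*E$, actually comes from a subsheaf of $E$. In \cite{OSS80} this uses the very explicit geometry of lines; here one must instead invoke connectedness of the fibers of $p$ together with a deformation argument controlled by the gap $a_j > a_{j+1}+1$, and then extend the resulting subsheaf (defined in codimension $0$) to all of $Z$ by taking saturation, which is harmless for computing rank and slope. Verifying that the generic splitting type is genuinely constant on $\cM$ (semicontinuity plus irreducibility of $\cM$) and that $p$ is generically smooth with the asserted fiber properties are the routine but necessary preliminaries.
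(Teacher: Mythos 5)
Your proposal is essentially the paper's own argument: run the Grauert--M\"ulich descent over the universal family $\PP^1\times\cM\to Z$, extract the subsheaf of $f^{*}E$ determined by the gap $a_j>a_{j+1}+1$, and descend it to $Z$ using connectedness of the fibers of $\PP^1\times\cM\to Z$ together with the infinitesimal rigidity forced by the gap (the paper packages exactly this as \Cref{integrate}, combining the relative Harder--Narasimhan filtration with the Descente Lemma \Cref{lem-descent}), then compute rank and degree of the saturated descended subsheaf. The one assertion you leave unproved --- that differentiating in the $\cM$-direction can only produce classes of degree $\geq -1$, so a nonzero second fundamental form would force $a_{j+1}\geq a_j-1$ --- is precisely where the hypothesis that $f^{*}T_Z$ is globally generated is used: the paper establishes it by identifying $T_{\PP^1\times\cM/Z}|_{\PP^1\times\{[f]\}}$ as an extension of $T_{\PP^1}$ by the Lazarsfeld--Mukai bundle of $f^{*}T_Z$, which is a direct sum of $\OO(-1)$'s (\Cref{reltmor} and \Cref{lem-O-1}), giving $\mu^{\min}\geq -1$.
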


For applications to slicing by $k$-planes, we will use \Cref{lem:slopesOfRestrictionsTokPlanes}.
\begin{definition}
Given a torsion free sheaf $E$ on a smooth projective variety, let $\mu^{\max}(E)$ denote the maximum slope of a nontrivial subsheaf of $E$. It is also the slope of the first subsheaf appearing in its Harder-Narasimhan filtration. Similarly let $\mu^{\min}(E)$ denote the minimum slope of a nontrivial quotient of $E$. It is also the slope of the quotient of the last two subsheaves appearing in its Harder-Narasimhan filtration.
\end{definition}

\begin{proposition}
\label{lem:slopesOfRestrictionsTokPlanes}
Let $E$ be a torsion free sheaf on $\mathbb{P}^n$. Let $\Lambda$ be a general $k$-plane in $\mathbb{P}^n$. Let $S\subsetneq E|_{\Lambda}$ be a sheaf appearing in the Harder-Narasimhan filtration of $E|_{\Lambda}$ and suppose 
\begin{align*}
\mu^{\min}(S)-\frac{1}{k}>\mu^{\max}(E|_{\Lambda}/S),
\end{align*}
then $E$ is not semistable.
\end{proposition}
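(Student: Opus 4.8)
The key idea is to reduce the statement about $k$-planes to a statement about lines, and then invoke the Grauert–Mulich-type result \Cref{thm-GrauertMulich} applied to the universal family of lines in $\PP^n$ (with $Z = \PP^n$, $f\colon \PP^1 \to \PP^n$ a general line, for which $f^*T_{\PP^n}$ is indeed globally generated and the incidence variety has connected fibers). I want to show that if $E|_\Lambda$ has a piece $S$ in its Harder–Narasimhan filtration with the separation property $\mu^{\min}(S) - \tfrac1k > \mu^{\max}(E|_\Lambda / S)$, then $E$ itself is destabilized.

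First I would record what happens when we restrict from $\Lambda$ (a general $k$-plane) to a general line $\ell \subset \Lambda$. The sheaf $S \subset E|_\Lambda$ restricts to a subsheaf $S|_\ell \subset E|_\ell$, and on the line $\ell \cong \PP^1$ everything splits. The crucial quantitative input is a bound comparing $\mu^{\min}$ and $\mu^{\max}$ of a sheaf on $\PP^k$ with the splitting type of its restriction to a general line: by \Cref{thm-GrauertMulich} applied to a general line in $\PP^k$, a semistable sheaf on $\PP^k$ restricts to a line with splitting type spread out by at most $1$, so for the HN pieces one gets that the least summand of $(S|_\ell)$ is at least $\mu^{\min}(S) - \tfrac{k-1}{k} \cdot(\text{something})$; more precisely the standard estimate is that for any torsion-free $F$ on $\PP^k$, the minimal splitting degree of $F|_\ell$ is $\geq \mu^{\min}(F) - \tfrac{k-1}{k}\,\mathrm{const}$ and the maximal is $\leq \mu^{\max}(F) + \tfrac{k-1}{k}\,\mathrm{const}$. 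I would carefully pin down the exact form of this inequality — this is where the factor $\tfrac1k$ in the hypothesis comes from — using that the HN filtration of a restriction is controlled by that of the original sheaf together with the Grauert–Mulich spread. The upshot should be: the hypothesis $\mu^{\min}(S) - \tfrac1k > \mu^{\max}(E|_\Lambda/S)$ forces, on a general line $\ell \subset \Lambda$, that every summand of $S|_\ell$ has degree strictly greater than every summand of $(E|_\Lambda/S)|_\ell$, i.e. there is a genuine gap of at least $2$ in the splitting type of $E|_\ell = E|_\Lambda|_\ell$ separating the $S$-part from the quotient part.

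Next, since $\ell \subset \Lambda \subset \PP^n$ is a general line in $\PP^n$, I apply \Cref{thm-GrauertMulich} directly to $E$ on $\PP^n$: the gap of size $\geq 2$ in the splitting type of $E|_\ell$ produces a subsheaf $E' \subset E$ on $\PP^n$ of rank equal to $\rk S$ and of the prescribed degree, namely the sum of the top part of the splitting type. Then I compute slopes: $\mu(E') = \mu^{\min}(S|_\ell)$-level quantity, which by the previous paragraph exceeds $\mu^{\max}((E|_\Lambda/S)|_\ell)$ and hence (chasing the inequalities back, using that restriction to a general linear subspace can only increase $\mu^{\min}$-type lower bounds by controlled amounts) exceeds $\mu(E)$ — because $\mu(E) = \mu(E|_\Lambda)$ lies between $\mu^{\max}(E|_\Lambda/S)$ and $\mu^{\min}(S)$ is false in general, so I instead argue directly that $\mu(E') > \mu(E|_\Lambda) = \mu(E)$ by comparing $\deg$ and $\rk$ through the line restriction, where degrees on $\PP^n$ and on $\Lambda$ and on $\ell$ are all compatible under the normalization $\deg(F) = \int c_1(F)\cdot \O(1)^{\dim - 1}$. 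This exhibits a subsheaf of $E$ of slope exceeding $\mu(E)$, so $E$ is not semistable.

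The main obstacle I anticipate is making the slope bookkeeping across three spaces ($\PP^n$, $\Lambda$, $\ell$) completely rigorous, in particular getting the constant $\tfrac1k$ exactly right rather than off by a factor: one has to combine the Grauert–Mulich spread on $\PP^k$ (which controls how much the splitting type on a line can differ from the slopes on $\Lambda$) with the fact that $S$ is not merely any subsheaf but a term in the HN filtration, so its restriction to a general line is again semistable-by-pieces with controlled slopes. A secondary technical point is ensuring that ``general line in $\Lambda$'' may be taken to be ``general line in $\PP^n$'' so that \Cref{thm-GrauertMulich} is applicable on $\PP^n$ — this is immediate since a general line in a general $k$-plane is a general line in $\PP^n$, and the splitting type of $E|_\ell$ is the generic one. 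Finally one should note the hypothesis that $E$ is torsion-free is what lets us restrict to general linear subspaces without picking up torsion, so all the restricted sheaves remain torsion-free and the slope formalism applies.
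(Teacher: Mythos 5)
There is a genuine gap at the central step of your plan: the claim that the hypothesis $\mu^{\min}(S)-\frac{1}{k}>\mu^{\max}(E|_{\Lambda}/S)$ forces, on a general line $\ell\subset\Lambda$, every summand of $S|_{\ell}$ to dominate every summand of $(E|_{\Lambda}/S)|_{\ell}$, and hence a gap of size at least $2$ in the splitting type of $E|_{\ell}$. Grauert--Mulich on $\Lambda\cong\PP^k$ only bounds \emph{consecutive} differences in the splitting type of a semistable sheaf by $1$, so the splitting degrees of a semistable piece of rank $r$ can spread over an interval of length roughly $r-1$ around its slope; a separation of the $\PP^k$-slopes by slightly more than $\frac{1}{k}$ is nowhere near enough to separate the splitting types, which can interleave. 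Concretely, with $k=2$: if $S$ is semistable of slope $\tfrac12$ restricting to a general line as $\O(1)\oplus\O$ (the shape of $T_{\PP^2}(-1)$) and the quotient is semistable of rank $3$ and slope $-\tfrac13$ restricting as $\O\oplus\O\oplus\O(-1)$, then the hypothesis $\tfrac12-\tfrac12>-\tfrac13$ holds, yet $E|_{\ell}=\O(1)\oplus\O^{\oplus 3}\oplus\O(-1)$ has no gap of size $2$ and the $S$-summands do not dominate the quotient summands. So \Cref{thm-GrauertMulich} applied to lines produces nothing, and the slope bookkeeping in your second paragraph has no input to run on. The constant $\frac{1}{k}$ in the statement does not come from restriction to lines at all.

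What the paper does instead is work directly with the family of $k$-planes: apply \Cref{integrate} with $Z=\PP^n$, $\cM=\mathbb{G}(k,n)$ and $\cU$ the universal $k$-plane. By \Cref{lem-LazMuk} and the Euler sequence, $T_{\cU/\PP^n}|_{\Lambda}\cong\Omega_{\Lambda}(1)^{\oplus(n-k)}$, which is semistable of slope $-\frac{1}{k}$, so $\mu^{\min}(T_{\cU/\PP^n}|_{\Lambda})=-\frac{1}{k}$ and your hypothesis is exactly the slope condition required by \Cref{integrate}. That lemma (descent via \Cref{lem-descent}, checked after restricting to a high-degree complete intersection curve in $\Lambda$ and using Mehta--Ramanathan, which is how the tensor-product slope estimate is controlled) produces a subsheaf $\widetilde S\subset E$ on $\PP^n$ agreeing generically with $S$ along $\Lambda$; since $S$ is a proper term of the Harder--Narasimhan filtration, $\mu(\widetilde S)=\mu(S)>\mu(E|_{\Lambda})=\mu(E)$, so $E$ is not semistable. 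Any attempt to salvage a line-based reduction would in effect have to reprove this descent for the $k$-plane family, which is precisely \Cref{integrate}.
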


The proofs of \Cref{thm-GrauertMulich} and \Cref{lem:slopesOfRestrictionsTokPlanes} are very similar in spirit to standard proofs of Grauert-Mulich, such as the one found in \cite{OSS80}. The argument relies crucially on the following Lemma.

\begin{lemma}[{Descente Lemma from \cite[Lemma 2.1.2]{OSS80}}]
\label{lem-descent}
Let $Y$ and $Z$ be smooth varieties and $\pi: Y\to Z$ be a surjective smooth morphism with connected fibers. Let $E$ be a vector bundle on $Z$ such that $\pi^{*}E$ has a vector subbundle $S$ with quotient vector bundle $Q$. Then, if $\Hom(T_{Y/Z},\Hom(S,Q))=0$, then $S$ is the pullback of a subbundle of $E$ on $Z$. 
\end{lemma}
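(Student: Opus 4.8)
The plan is to re-encode the subbundle $S\subset \pi^{*}E$ as a classifying morphism to a relative Grassmann bundle, use the vanishing hypothesis to show that this morphism contracts every fiber of $\pi$, and then descend it along the faithfully flat morphism $\pi$ to obtain a subbundle of $E$ on $Z$.

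Write $r=\rk(S)$ and let $p\colon \mathcal{G}:=\Gr_{Z}(r,E)\to Z$ denote the Grassmann bundle parametrizing rank-$r$ locally free subsheaves of $E$ with locally free quotient, equipped with its universal sequence $0\to \mathcal{S}\to p^{*}E\to \mathcal{Q}\to 0$. By the universal property of $\mathcal{G}$, the subbundle $S\subset \pi^{*}E$ is classified by a unique $Z$-morphism $\phi\colon Y\to \mathcal{G}$ with $\phi^{*}\mathcal{S}=S$, and hence $\phi^{*}\mathcal{Q}=Q$ inside $\phi^{*}p^{*}E=\pi^{*}E$. Since $p$ is smooth, pulling back the locally split exact sequence $0\to T_{\mathcal{G}/Z}\to T_{\mathcal{G}}\to p^{*}T_{Z}\to 0$ along $\phi$ stays exact, and because $\phi$ lies over $Z$ we have $(\phi^{*}dp)\circ d\phi=d\pi$; therefore the differential $d\phi\colon T_{Y}\to \phi^{*}T_{\mathcal{G}}$ carries the vertical subbundle $T_{Y/Z}=\ker(T_{Y}\to \pi^{*}T_{Z})$ into $\phi^{*}T_{\mathcal{G}/Z}$. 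Using $T_{\mathcal{G}/Z}=\mathcal{H}om(\mathcal{S},\mathcal{Q})$ we get $\phi^{*}T_{\mathcal{G}/Z}=\mathcal{H}om(S,Q)$, so $d\phi$ induces a morphism $T_{Y/Z}\to \mathcal{H}om(S,Q)$, i.e. an element of $\Hom(T_{Y/Z},\mathcal{H}om(S,Q))$, which is $0$ by hypothesis.

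Next I would deduce that $\phi$ contracts each fiber of $\pi$. Fix a closed point $z\in Z$; as $\pi$ is smooth the fiber $Y_{z}$ is smooth, and it is connected by assumption, so it is a smooth connected variety over $\CC$. The differential of $\phi|_{Y_{z}}\colon Y_{z}\to \mathcal{G}_{z}=\Gr(r,E_{z})$ at any point is the restriction of the morphism $T_{Y/Z}\to \mathcal{H}om(S,Q)$ just shown to vanish, so $d(\phi|_{Y_{z}})\equiv 0$; in characteristic zero a morphism from a connected reduced variety whose differential vanishes identically is constant. Hence $\phi(Y_{z})$ is a single reduced point for every closed point $z\in Z$.

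Finally I would descend $\phi$ along $\pi$. Since $\pi$ is smooth and surjective it is faithfully flat and locally of finite presentation, so by faithfully flat descent of morphisms into the scheme $\mathcal{G}$ it suffices to verify $\operatorname{pr}_{1}^{*}\phi=\operatorname{pr}_{2}^{*}\phi$ as morphisms $Y\times_{Z}Y\to \mathcal{G}$. These agree at every closed point $(y_{1},y_{2})$ lying over $z\in Z$, since $\operatorname{pr}_{i}^{*}\phi$ sends it to $\phi(y_{i})$, which is the common value of $\phi$ on the contracted fiber $Y_{z}$ by the previous step. Moreover $Y\times_{Z}Y$ is smooth over $Y$ (base change of the smooth $\pi$), hence smooth over $\CC$, so reduced and of finite type, while $\mathcal{G}$ is separated; two morphisms from such a scheme to a separated scheme that agree on all closed points coincide. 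Thus $\phi=\psi\circ\pi$ for a unique $Z$-morphism $\psi\colon Z\to\mathcal{G}$, which, being a $Z$-morphism, is a section of $p$ and so corresponds to a locally free subsheaf $S':=\psi^{*}\mathcal{S}\subset E$ with locally free quotient $\psi^{*}\mathcal{Q}=E/S'$; pulling back, $\pi^{*}S'=\phi^{*}\mathcal{S}=S$, which is the claim. The main obstacle is exactly this last step: promoting the set-theoretic fact that $\phi$ is constant on fibers to a factorization through $\pi$ in the category of schemes. Reducedness of $Y\times_{Z}Y$ — which smoothness of $\pi$ and $Y$ provides for free — is what upgrades the pointwise coincidence of $\operatorname{pr}_{1}^{*}\phi$ and $\operatorname{pr}_{2}^{*}\phi$ to an honest equality, so that descent applies; everything else is a formal unwinding of the universal property of the Grassmann bundle and of the characteristic-zero principle that a vanishing differential forces constancy, the hypothesis $\Hom(T_{Y/Z},\mathcal{H}om(S,Q))=0$ entering precisely to make that differential vanish.
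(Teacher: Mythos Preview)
The paper does not supply a proof of this lemma; it simply quotes it from \cite[Lemma 2.1.2]{OSS80}. Your argument is correct and is essentially the standard proof given in that reference: encode $S\subset\pi^{*}E$ as a $Z$-morphism $\phi$ to the relative Grassmannian, identify the vertical part of $d\phi$ with the element of $\Hom(T_{Y/Z},\mathcal{H}om(S,Q))$ that the hypothesis kills, conclude (using characteristic zero and connectedness of fibers) that $\phi$ is constant along fibers, and then descend $\phi$ through the faithfully flat $\pi$ by checking the cocycle condition on the reduced scheme $Y\times_{Z}Y$.
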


The key technical lemma of this section is \Cref{integrate}, whose proof will use the following simple fact in \Cref{subquots}.

\begin{lemma}
\label{subquots}
Let $Y$ be a variety and $E$ and $F$ be two sheaves on $Y$. Suppose all the semistable subquotients in the Harder-Narasimhan filtration of $E$ have greater slope than the corresponding semistable subquotients of $F$. Then, $\Hom(E,F)=0$.
\end{lemma}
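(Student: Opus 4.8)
The plan is to argue by induction, reducing to the semistable case. First I would recall the basic fact that if $A$ is a semistable sheaf and $B$ is a semistable sheaf with $\mu(A) > \mu(B)$, then $\Hom(A,B) = 0$: indeed, the image of any nonzero map would be a quotient of $A$, hence of slope $\geq \mu(A)$, and simultaneously a subsheaf of $B$, hence of slope $\leq \mu(B)$, contradicting $\mu(A) > \mu(B)$. (Strictly I should be slightly careful, since ``semistable'' here should be read with the appropriate non-strict inequalities, but the strict gap $\mu(A) > \mu(B)$ makes this go through.)

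Next I would set up the induction on the total length $a + a'$ of the two Harder--Narasimhan filtrations of $E$ and $F$. Write $0 = E_0 \subsetneq E_1 \subsetneq \cdots \subsetneq E_a = E$ and $0 = F_0 \subsetneq F_1 \subsetneq \cdots \subsetneq F_{a'} = F$ for the two filtrations, with semistable subquotients $\mathrm{gr}_i E := E_i/E_{i-1}$ and $\mathrm{gr}_j F := F_j/F_{j-1}$. The hypothesis ``the semistable subquotients of $E$ have greater slope than the corresponding ones of $F$'' I interpret as: $a = a'$ and $\mu(\mathrm{gr}_i E) > \mu(\mathrm{gr}_i F)$ for every $i$. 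The crucial numerical consequence of this, together with the strictly decreasing nature of the HN slopes, is that $\mu^{\min}(E) = \mu(\mathrm{gr}_a E) > \mu(\mathrm{gr}_a F) = \mu^{\min}(F) \geq \mu(\mathrm{gr}_j F)$ for all $j$, and dually $\mu^{\max}(E) = \mu(\mathrm{gr}_1 E) \geq \mu(\mathrm{gr}_i E) > \mu(\mathrm{gr}_1 F) = \mu^{\max}(F)$; more to the point, every HN slope of $E$ strictly exceeds every HN slope of $F$. Indeed $\mu(\mathrm{gr}_i E) \geq \mu(\mathrm{gr}_a E) > \mu(\mathrm{gr}_1 F) \geq \mu(\mathrm{gr}_j F)$ for any $i \leq j$, and for $i > j$ one has $\mu(\mathrm{gr}_i E) > \mu(\mathrm{gr}_i F) \geq \mu(\mathrm{gr}_j F)$; either way $\mu(\mathrm{gr}_i E) > \mu(\mathrm{gr}_j F)$ for all $i,j$.

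Now for the inductive step I would peel off the bottom and top pieces. Consider the short exact sequences $0 \to E_{a-1} \to E \to \mathrm{gr}_a E \to 0$ and $0 \to F_1 \to F \to F/F_1 \to 0$. Applying $\Hom(-, F_1)$ and $\Hom(-, F/F_1)$ and chasing the resulting long exact sequences, a map $E \to F$ is controlled by $\Hom(E_{a-1}, F)$, $\Hom(\mathrm{gr}_a E, F)$, and so on. The cleanest route: first show $\Hom(\mathrm{gr}_a E, F) = 0$ by inducting on the length of $F$'s filtration using the sequence $0 \to F_1 \to F \to F/F_1 \to 0$ and the base fact above (every slope of $F$ is strictly below $\mu(\mathrm{gr}_a E)$, since $\mathrm{gr}_a E$ is the lowest-slope piece of $E$); then from $0 \to E_{a-1} \to E \to \mathrm{gr}_a E \to 0$ deduce $\Hom(E,F) = \Hom(E_{a-1}, F)$; and finally apply the outer induction to $E_{a-1}$, whose HN filtration is $E_1 \subsetneq \cdots \subsetneq E_{a-1}$ with subquotients still having slopes strictly above all those of $F$ — though here I must be slightly more careful, since ``corresponding'' subquotients no longer line up in length. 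The fix is to observe that the only property actually used is that every HN slope of the source strictly exceeds every HN slope of the target, which I established above and which is inherited by $E_{a-1}$; so I would state and prove the lemma in that slightly more robust form. The main obstacle is therefore not any deep input but this bookkeeping: making sure the induction hypothesis is phrased (``every HN slope of $E$ exceeds every HN slope of $F$'', or equivalently $\mu^{\min}(E) > \mu^{\max}(F)$) so that it is genuinely inherited by the sub- and quotient-sheaves appearing in the dévissage. Once the statement is in the form $\mu^{\min}(E) > \mu^{\max}(F) \implies \Hom(E,F) = 0$, the induction on filtration lengths closes immediately, and the hypothesis as stated in the lemma is a special case since it forces $\mu^{\min}(E) > \mu^{\min}(F) = \mu(\mathrm{gr}_{a'} F) = \mu^{\max}(F)$ is false in general — rather, it forces every slope of $E$ above every slope of $F$ by the computation above, which is exactly what is needed.
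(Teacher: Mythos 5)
Your core dévissage---proving that $\mu^{\min}(E) > \mu^{\max}(F)$ implies $\Hom(E,F)=0$ by peeling off graded pieces of the two Harder--Narasimhan filtrations and invoking the basic vanishing for semistable sheaves with a strict slope gap---is correct and is essentially the paper's own proof: the paper tests each $E_i/E_{i-1}$ against $F_b/F_{b-1}$, then $F_{b-1}/F_{b-2}$, and so on, which is the same reduction as your peeling of $E_{a-1}\subset E$ and $F_1\subset F$.

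The genuine gap is in your numerical bridge from the stated hypothesis to that condition. You read the hypothesis as: the filtrations have equal length and $\mu(E_i/E_{i-1}) > \mu(F_i/F_{i-1})$ for each $i$, and you then claim this forces every HN slope of $E$ to exceed every HN slope of $F$. That implication is false, and the inequalities you use to justify it are reversed: $\mu^{\min}(F)$ is the \emph{smallest} slope of $F$, so $\mu^{\min}(F)\leq \mu(F_j/F_{j-1})$, and for $i>j$ one has $\mu(F_i/F_{i-1}) < \mu(F_j/F_{j-1})$; moreover the inequality $\mu(E_a/E_{a-1}) > \mu(F_1)$ that anchors your chain is exactly the assertion at stake and is not available. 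Concretely, on $\PP^1$ take $E=\OO(3)\oplus\OO(1)$ and $F=\OO(2)\oplus\OO$: slopewise $3>2$ and $1>0$, yet $\Hom(E,F)\neq 0$ (any nonzero map $\OO(1)\to\OO(2)$). So under your literal coordinate-wise reading the lemma itself would be false; the hypothesis must be read---as the paper's proof does, and as its application inside the descent argument supplies, where the input is precisely $\mu^{\min}$ of the source exceeding $\mu^{\max}$ of the target---as saying that every HN subquotient of $E$ has slope strictly greater than every HN subquotient of $F$, i.e. $\mu^{\min}(E) > \mu^{\max}(F)$. With that reading no bridging step is needed and your argument coincides with the paper's.
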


\begin{proof}
Let $0=E_0\subset E_1\subset \cdots\subset E_a=E$ be the Harder-Narasimhan filtration for $E$ and $0=F_0\subset F_1\subset \cdots\subset F_b=F$ be the Harder-Narasimhan filtration for $F$. Consider a map $\phi: E\to F$. We show $\phi=0$. 

First, $\phi$ induces a map $E_1\to F_b/F_{b-1}$, which is zero since the source is semistable and has slope greater than the target, which is also semistable. Therefore, $\phi$ induces a map $E_1\to F_{b-1}/F_{b-2}$, which again is zero for the same reason. Continuing this, we find the map $E_1\to F$ is zero. 

Then, we consider the induced map $E_2/E_1\to F_{b}/F_{b-1}$ and repeat the argument above to find $E_2/E_1\to F$ must be the zero map. Continuing this for $E_3/E_2$ and so on shows that the map $\phi$ is zero. 
\end{proof}

\begin{lemma}
\label{integrate}
Let $Z$ be a smooth projective variety and let $\cU \to \cM$ be a smooth family of projective varieties with a smooth surjective map $\pi:\cU \to Z$ having connected fibers. Let $E$ be a torsion free sheaf on $Z$ and let $\cU_p$ be a general fiber of $\cU\to \cM$. Let $S$ be a subsheaf of $\pi^{*}E|_{\cU_p}$ appearing in the Harder-Narasimhan filtration of $\pi^{*}E|_{\cU_p}$ such that
\begin{align*}
    \mu^{\min}(S)+\mu^{\min}(T_{\cU/Y}|_{\cU_p})>\mu^{\max}(\pi^{*}E|_{\cU_p}/S),
\end{align*}
then there is a subsheaf $\widetilde{S}$ on $Z$ of $E$ such that $\widetilde{S}|_{\cU_p}$ is a subsheaf of $\pi^{*}E|_{\cU_p}$ agreeing with $S$ on the locus where $S$ is a vector bundle. 
\end{lemma}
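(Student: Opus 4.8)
The plan is to mimic the classical Grauert–Mülich argument using the Descente Lemma (\Cref{lem-descent}), but with two extra layers of care: first, $E$ (hence $S$) need not be a vector bundle, only torsion-free and a Harder–Narasimhan piece; second, the Descente Lemma applies to the relative situation $\pi : \cU \to Z$, whereas the hypothesis is stated on a single fiber $\cU_p$. First I would replace $E$ by $E$ restricted to the open locus $Z^\circ \subset Z$ where $E$ is a vector bundle; since $E$ is torsion-free, $Z \setminus Z^\circ$ has codimension $\geq 2$, so subsheaves of $E|_{Z^\circ}$ extend (via pushforward and reflexive hull, then intersecting with $E$) to subsheaves of $E$ agreeing with the original on $Z^\circ$. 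This reduces us to constructing $\widetilde S$ over $Z^\circ$, at the cost of shrinking $\cU$ to $\pi^{-1}(Z^\circ)$. Similarly, on the fiber $\cU_p$ the sheaf $S \subset \pi^*E|_{\cU_p}$ is a subbundle away from a closed subset of codimension $\geq 2$ (being an HN piece of a sheaf that is a bundle in codimension $1$), which is the locus referred to in the statement.

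The key step is to upgrade the fiberwise hypothesis to a statement about the total space $\cU$. Over the (nonempty, open) locus in $\cM$ where $\cU_p$ is general, the relative Harder–Narasimhan filtration of $\pi^*E$ is defined and restricts fiberwise to the HN filtration, so the $S$ in the statement globalizes to a subsheaf $\mathcal S \subset \pi^*E$ over $\pi^{-1}(\cM^\circ)$ whose restriction to each general fiber is the given one. Semicontinuity of slopes, together with the strict inequality $\mu^{\min}(S) + \mu^{\min}(T_{\cU/Y}|_{\cU_p}) > \mu^{\max}(\pi^*E|_{\cU_p}/S)$, then gives the same inequality generically on $\cM$, and shrinking $\cM$ we may arrange $\mathcal S$ and the quotient $\mathcal Q = \pi^*E/\mathcal S$ to be $\pi$-flat with locally free fibers over the appropriate open loci. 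Now I invoke \Cref{subquots}: the hypothesis on minimal and maximal slopes, applied to the relative tangent sheaf twist, forces $\Hom(T_{\cU/\cM}, \mathcal{H}om(\mathcal S, \mathcal Q)) = 0$ fiberwise — indeed $\mathcal{H}om(\mathcal S,\mathcal Q) \otimes \text{(dual of }T_{\cU/\cM})$ has all HN slopes negative on each fiber by the slope inequality, so it has no sections, and this propagates to the relative $\Hom$ by pushing forward. (Here I use that $T_{\cU/\cM}$ restricted to a fiber is $T_{\cU_p}$; the notation $T_{\cU/Y}$ in the statement should be $T_{\cU/\cM}$.)

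With the vanishing $\Hom(T_{\cU/\cM}, \mathcal{H}om(\mathcal S,\mathcal Q)) = 0$ in hand, the Descente Lemma (\Cref{lem-descent}), applied to $\pi : \cU^\circ \to Z^\circ$ where both $\mathcal S$ and $\mathcal Q$ are subbundle/quotient-bundle, shows that $\mathcal S$ descends to a subbundle $\widetilde S_0 \subset E|_{Z^\circ}$ with $\pi^* \widetilde S_0 = \mathcal S$ there. Pulling back to the general fiber $\cU_p$ recovers $S$ over the locus where $S$ is a bundle. Finally I extend $\widetilde S_0$ across the codimension-$\geq 2$ locus $Z \setminus Z^\circ$ as in the first paragraph — take $j_* \widetilde S_0$ for $j : Z^\circ \hookrightarrow Z$ the inclusion, intersect with $E$ inside $j_*(E|_{Z^\circ})$ — to obtain the desired subsheaf $\widetilde S \subset E$; since $\widetilde S|_{Z^\circ} = \widetilde S_0$ and the locus where $S$ is a bundle maps into $Z^\circ$, the agreement claim holds.

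I expect the main obstacle to be the bookkeeping around sheaves that are not locally free: making precise the "agreeing on the locus where $S$ is a vector bundle" clause, ensuring the relative HN filtration behaves well in families (which requires working over a possibly smaller open $\cM^\circ$, harmless since $f$ resp. $\cU_p$ is assumed general), and checking that the Descente Lemma's hypotheses — everything in sight a genuine vector (sub)bundle — are met after passing to $Z^\circ$ and the corresponding open in $\cU$. The slope inequality itself does all the real work via \Cref{subquots}; the rest is ensuring the hypotheses of \Cref{lem-descent} are literally satisfied.
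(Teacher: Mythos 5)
There is a genuine gap, and it lies precisely where you tried to fix the paper's typo: you resolved $T_{\cU/Y}$ as $T_{\cU/\cM}$, whose restriction to a fiber is $T_{\cU_p}$, but the intended sheaf is $T_{\cU/Z}$, the relative tangent sheaf of the map $\pi\colon \cU\to Z$ along which the descent actually happens. This is forced by the Descente Lemma itself: to descend a subsheaf of $\pi^{*}E$ along $\pi\colon\cU^{0}\to Z^{0}$ you must verify $\Hom(T_{\cU^{0}/Z^{0}},\Hom(S,Q))=0$, not $\Hom(T_{\cU/\cM},\Hom(S,Q))=0$; and it is confirmed by how the lemma is used later, where $\mu^{\min}(T_{\cU/Z}|_{\cU_p})$ is computed via \Cref{reltmor} and \Cref{lem-LazMuk} to be $-1$ (Lazarsfeld--Mukai bundle of $f^{*}T_Z$, in \Cref{thm-GrauertMulich}) or $-\frac{1}{k}$ (namely $\Omega_{\Lambda}(1)^{n-k}$, in \Cref{lem:slopesOfRestrictionsTokPlanes}). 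With your reading the hypothesis would involve the slopes of $T_{\cU_p}$, which are nonnegative, so the inequality would be far too weak to produce the vanishing required for descent along $\pi$; and conversely the vanishing you sketch, $\Hom(T_{\cU/\cM},\mathcal{H}om(\mathcal S,\mathcal Q))=0$, is simply not the hypothesis of \Cref{lem-descent} for the morphism $\cU^{0}\to Z^{0}$ you then feed it. So as written the chain from the slope inequality to the Descente Lemma breaks.

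The rest of your outline does match the paper's route: extend the Harder--Narasimhan filtration of $\pi^{*}E|_{\cU_p}$ to a relative filtration over a shrunken $\cM$ (the paper cites Shatz), pass to the open set $\cU^{0}$ with codimension-$2$ complement where sub and quotient are bundles and to its image $Z^{0}$, apply \Cref{lem-descent}, and extend across $Z\setminus Z^{0}$. One further point you gloss over: to turn the slope hypothesis into the required $\Hom$-vanishing on the (higher-dimensional) fiber, the paper restricts to a general complete intersection curve $C\subset\cU_p$ of high degree and invokes Mehta--Ramanathan, so that the restricted filtration is the HN filtration on $C$ and the tensor-product slope identities used with \Cref{subquots} hold for bundles on a curve. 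Your appeal to ``all HN slopes negative on each fiber'' for the tensor product needs either this restriction step or an explicit appeal to tensor semistability in characteristic zero; this is patchable, but the misidentification of $T_{\cU/Z}$ is not, since it changes both the hypothesis you are allowed to use and the vanishing you must prove.
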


\begin{proof}
By \cite[Lemmas 5 and 7]{S77}, we can replace $\cM$ by a dense open subset so that the members of the Harder-Narasimham filtration of $\pi^{*}E|_{\cU_p}$ extend to a family over $\cU$. 
Namely, there exists a sequence of subsheaves $0=S_0\subset S_1\subset \cdots S_a= \pi^{*}E$ that restricts to the Harder-Narasimham filtration of $E|_{\cU_p}$ for all $p\in \cM$, so in particular $S=S_i|_{\cU_p}$ for some $i$. 

We have an open subset $\cU^0\subset \cU$ whose complement has codimension at least 2 and consists of the points over which $S_i$ and $\pi^{*}E/S_i$ are both vector bundles. The image of $\cU^0$ in $Z$ is an open subset $Z^0$ (by flatness of $\pi$) whose complement must also have  codimension at least 2. 

Now, we can apply \Cref{lem-descent} in the case $Y=\cU^0$ and $Z=Z^0$. In order to do so, we must show that
\begin{align}\label{Descentcondition}
    \Hom(T_{\cU^0/Z^0},\Hom(S_i|_{\cU^0},(\pi^{*}E/S_i)|_{\cU^0}))=0.
\end{align}
For this, let $C \subset \cU_{p}$ be a general complete intersection curve of sufficiently high degree. Restricting the Harder-Narasimhan filtration of $\pi^{*}E|_{\cU_p}$ to $C$ will result in a sequence of vector sub-bundles. This is because each semistable subquotient on $\cU_{p}$ is in particular torsion-free, so the Harder-Narasimhan filtration is a sequence of vector subbundles away from a set of codimension at least 2. Since $C$ can be chosen to avoid this set of codimension 2, restricting a sequence of sub-bundles yields a sequence of sub-bundles. By \cite{MR81}, this sequence of sub vector bundles on $C$ is the Harder-Narasimhan filtration of $\pi^{*}E|_{C}$.  $\Hom(T_{\cU/Z}|_{\cU_p}\otimes S,\pi^{*}E|_{\cU_p}/S)=0$.   In order to show \eqref{Descentcondition}, it now suffices to show 
\begin{align*}
    \Hom(T_{\cU/Z}|_{C},\Hom(S|_C,\pi^{*}E|_{\cU_p}/S)|_C))=\Hom(T_{\cU/Z}|_{C}\otimes S|_C,\pi^{*}E|_{\cU_p}/S)|_C)=0.
\end{align*}

We conclude by applying \Cref{subquots} as 
\begin{align*}
    \mu^{\max}(\pi^{*}E|_{\cU_p}/S)|_C) &= \deg(C) \mu^{\max}(\pi^{*}E|_{\cU_p}/S))\\
    \mu^{\min}(S|_C) &= \deg(C) \mu^{\min}(C)\\
    \mu^{\min}(T_{\cU/Z}|_{C}) &= \deg(C) \mu^{\min}(T_{\cU/Z})\\
    \mu^{\min}(T_{\cU/Z}|_{C}\otimes S|_{C}) &= \mu^{\min}(S|_C) + \mu^{\min}(T_{\cU/Z}|_{C}).
\end{align*}
\end{proof}

In order for us to apply \Cref{integrate}, it is necessary to understand the sheaf $T_{\cU/Z}|_{\cU_{p}}$.  The \Cref{lem-LazMuk} and \Cref{reltmor} identifies the sheaf in two common situations.

\begin{definition}
Let $Y$ be a variety and $V$ be a globally generated vector bundle on $Y$. Then, the \emph{Lazarsfeld-Mukai} bundle of $V$ is the kernel of the evaluation map $\OO_Y\otimes H^0(V)\to V$. 
\end{definition}

\begin{lemma}
\label{lem-LazMuk}
Let $Z$ be a smooth projective variety and $\cM$ be a smooth open subset of a component of the Hilbert scheme of varieties on $Z$. Let $\cU$ be the universal family, and suppose that the natural map $\pi: \cU \to Z$ is smooth. Let $\cU_{p}$ be a general fiber of $\cU \to \cM$. Then $T_{\cU/Z}|_{\cU_{p}}$ is the Lazarsfeld-Mukai bundle for the normal bundle $N_{\cU_{p}/Z}$, defined by the short exact sequence
\[ 0 \to T_{\cU / Z}|_{\cU_{p}} \to H^0(N_{\cU_{p}/Z}) \otimes \cO_{\cU_{p}} \to N_{\cU_{p}/Z} \to 0. \]
\end{lemma}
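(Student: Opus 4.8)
The plan is to compute $T_{\cU/Z}$ along a fiber by interpreting both the relative tangent sheaf of $\cU\to Z$ and the deformation theory of $\cU_p$ inside $Z$, and then match them. First I would recall the two fibration structures on $\cU$: the projection $q:\cU\to\cM$ realizes $\cU_p = q^{-1}(p)$ as the subvariety of $Z$ corresponding to $p\in\cM$, while $\pi:\cU\to Z$ is the (smooth, by hypothesis) evaluation map. The key point is that $\cU$, as an open subset of the incidence variety $\{(z,p): z\in \cU_p\}\subset Z\times\cM$, carries two natural exact sequences of tangent sheaves: one for $q$ and one for $\pi$. I would fix a point $(z,p)\in\cU$ lying over a general $p$, and analyze the fiber $\pi^{-1}(z)\cap q^{-1}(\text{nbhd})$, i.e. the locus of members of the family through the fixed point $z$.

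The heart of the argument is the standard deformation-theoretic identification: for $\cM$ an open subset of the Hilbert scheme, the Zariski tangent space to $\cM$ at $p$ is $H^0(\cU_p, N_{\cU_p/Z})$, and more relatively, $q^*T_\cM|_{\cU_p}$ is the trivial bundle $H^0(N_{\cU_p/Z})\otimes\cO_{\cU_p}$ (using that $\cM$ is smooth at $p$, so $T_\cM$ is a vector bundle near $p$ of the expected rank). Restricting the tangent sequence $0\to T_{\cU/\cM}\to T_\cU\to q^*T_\cM\to 0$ and the sequence $0\to T_{\cU/Z}\to T_\cU\to \pi^*T_Z\to 0$ to $\cU_p$, and using $T_{\cU/\cM}|_{\cU_p}=T_{\cU_p}$ together with $\pi^*T_Z|_{\cU_p}=T_Z|_{\cU_p}$, one gets a commutative diagram whose rows and columns let me extract $T_{\cU/Z}|_{\cU_p}$ as the kernel of the composite $q^*T_\cM|_{\cU_p}\to T_\cU|_{\cU_p}\to \pi^*T_Z|_{\cU_p}\to N_{\cU_p/Z}$; I would argue this composite is precisely the evaluation map $H^0(N_{\cU_p/Z})\otimes\cO_{\cU_p}\to N_{\cU_p/Z}$, since an infinitesimal motion of the member $\cU_p$ induces, at each point, the corresponding normal vector, which is exactly how the Hilbert-scheme-to-normal-bundle identification is set up. Once the composite is shown to be the evaluation map (and surjective — which holds because $\pi$ is smooth, forcing $\cU_p$ to move in all normal directions at a general point), the snake lemma / kernel comparison gives $T_{\cU/Z}|_{\cU_p}=\ker(\mathrm{ev})$, which is by definition the Lazarsfeld–Mukai bundle; the displayed short exact sequence follows, and $N_{\cU_p/Z}$ is globally generated for the same reason (smoothness of $\pi$), so the definition applies.

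The main obstacle I expect is bookkeeping the two tangent-sequence structures carefully enough to see that the relevant composite really is the evaluation map and not merely some map of the right shape — in particular, verifying surjectivity of $q^*T_\cM|_{\cU_p}\to N_{\cU_p/Z}$ at the generic point, which is where smoothness of $\pi:\cU\to Z$ is used in an essential way (it guarantees a general member passes through a general point of $Z$ transversally in every normal direction, so the normal bundle is globally generated and the evaluation is onto). A secondary subtlety is the passage between "on a dense open subset of $\cU_p$" statements and genuine sheaf equalities: since all the sheaves in sight are torsion-free (indeed $T_{\cU/Z}|_{\cU_p}$ is a subsheaf of a vector bundle and the LM bundle is locally free), an isomorphism on a big open subset extends uniquely, so it suffices to do the identification generically and then invoke reflexivity/torsion-freeness — which is exactly the style already used in the proof of \Cref{integrate}.
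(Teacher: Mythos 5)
Your proposal is correct and follows essentially the same route as the paper: restrict the two relative tangent sequences of $\cU\subset Z\times\cM$ to a general fiber, use the Hilbert-scheme identification $T_{\cM,p}\cong H^0(N_{\cU_p/Z})$ (so the pullback of $T_\cM$ to $\cU_p$ is trivial), and a nine-lemma/kernel comparison to produce $0\to T_{\cU/Z}|_{\cU_p}\to H^0(N_{\cU_p/Z})\otimes\cO_{\cU_p}\to N_{\cU_p/Z}\to 0$ with the second map the evaluation. The only slip is the first arrow of your composite, from the pullback of $T_\cM$ into $T_\cU|_{\cU_p}$, which is not a natural map (the natural map goes the other way); instead observe that $T_\cU|_{\cU_p}\to T_Z|_{\cU_p}\to N_{\cU_p/Z}$ kills $T_{\cU_p}=T_{\cU/\cM}|_{\cU_p}$ and hence descends to the quotient $H^0(N_{\cU_p/Z})\otimes\cO_{\cU_p}$, giving exactly the evaluation map whose kernel you then identify with $T_{\cU/Z}|_{\cU_p}$.
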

\begin{proof}
First we compare the normal sheaf of $\cU$ in $\cM \times Z$ to the normal sheaf $N_{\cU_p/Z}$. We have the following diagram.

\catcode`\@=10
\newdimen\cdsep
\cdsep=3em

\def\cdstrut{\vrule height .25\cdsep width 0pt depth .12\cdsep}
\def\@cdstrut{{\advance\cdsep by 2em\cdstrut}}

\def\arrow#1#2{
  \ifx d#1
    \llap{$\scriptstyle#2$}\left\downarrow\cdstrut\right.\@cdstrut\fi
  \ifx u#1
    \llap{$\scriptstyle#2$}\left\uparrow\cdstrut\right.\@cdstrut\fi
  \ifx r#1
    \mathop{\hbox to \cdsep{\rightarrowfill}}\limits^{#2}\fi
  \ifx l#1
    \mathop{\hbox to \cdsep{\leftarrowfill}}\limits^{#2}\fi
}
\catcode`\@=10

\cdsep=3em
$$
\begin{matrix}
& & 0 & & 0 \cr
& & \arrow{u}{} & & \arrow{u}{} \cr
& & \cO_{\cU_p}^N & \arrow{r}{=} & \cO_{\cU_p}^N \cr
& & \arrow{u}{} & & \arrow{u}{} \cr
0 & \arrow{r}{} &  T_{\cU}|_{\cU_p}  & \arrow{r}{} & T_{\cM \times Z}|_{\cU_p} & \arrow{r}{} &  N_{\cU/ \cM \times Z}|_{\cU_p}& \arrow{r}{} & 0          \cr
& &  \arrow{u}{} & & \arrow{u}{} & & \arrow{u}{\cong} \cr
0 & \arrow{r}{} & T_{\cU_p} & \arrow{r}{} & T_{Z}|_{\cU_p} & \arrow{r}{} &  N_{\cU_p/Z} & \arrow{r}{} & 0 \cr
& & \arrow{u}{} & & \arrow{u}{} \cr
& & 0 & & 0 \cr
\end{matrix}
$$

In this diagram, we have written $H^0(N_{\cU_{p}/Z}) \otimes \cO_{\cU_{p}}$ as $\cO_{\cU_p}^N$, where $N=h^0(N_{\cU_{p}/Z})$.  We see that $N_{\cU/ \cM \times Z}|_{\cU_{p}}$ is isomorphic to $N_{\cU_{p}/Z}$ by the eight lemma. Next we relate $N_{\cU/ \cM \times Z}|_{\cU_{p}}$ to the Lazarsfeld-Mukai bundle. Consider the following diagram, where the lower right entry is computed by the eight lemma.

$$
\begin{matrix}
& & 0 & & 0 \cr
& & \arrow{u}{} & & \arrow{u}{} \cr
& & T_Z|_{\cU_p} & \arrow{r}{=} & T_Z|_{\cU_p} \cr
& & \arrow{u}{} & & \arrow{u}{} \cr
0 & \arrow{r}{} &  T_{\cU}|_{\cU_p}  & \arrow{r}{} & T_{\cM \times Z}|_{\cU_p} & \arrow{r}{} &  N_{\cU / \cM \times Z}|_{\cU_p}& \arrow{r}{} & 0          \cr
& &  \arrow{u}{} & & \arrow{u}{} & & \arrow{u}{\cong} \cr
0 & \arrow{r}{} & T_{\cU / Z} & \arrow{r}{} & \pi^{*}T_{\cM}|_{\cU_p} & \arrow{r}{} &  N_{\cU / \cM \times Z}|_{\cU_p} & \arrow{r}{} & 0 \cr
& & \arrow{u}{} & & \arrow{u}{} \cr
& & 0 & & 0 \cr
\end{matrix}
$$

Then since $\pi^* T_{\cM}$ is constant on $\cU_p$ and $N_{\cU/ \cM \times Z}|_{\cU_p} \cong N_{\cU_p/Z}$, we see that the last row becomes
\[ 0 \to T_{\cU / Z}|_{\cU_p} \to H^0(N_{\cU_p/Z}) \otimes \cO \to N_{\cU_p/Z} \to 0. \]

The result follows.

\end{proof}

\begin{lemma}
\label{reltmor}
Let $Y$ and $Z$ be smooth projective schemes and $\cM$ be an open subset of $\operatorname{Mor}(Y,Z)$. Let $\pi: Y\times\cM\to Z$ be the universal map. For $f: Y\to Z$ in $\cM$, suppose $f^{*}T_Z$ is globally generated. Then, the restriction $T_{Y\times\cM/Z}|_{Y\times\{[f]\}}$ is an extension of $T_Y$ by the Lazersfeld Mukai bundle of $f^{*}T_Z$. 
\end{lemma}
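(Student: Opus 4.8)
The plan is to imitate the diagram chases in the proof of \Cref{lem-LazMuk}, the situation here being in fact simpler because the total space $\cU = Y\times\cM$ is literally a product. Write $W=Y\times\cM$, let $p\from W\to Y$ and $q\from W\to\cM$ be the projections, let $\pi\from W\to Z$ be the universal morphism, and identify the fiber $q^{-1}([f])$ with $Y$, writing $\iota\from Y\hookrightarrow W$ for the inclusion. Since $W$ is a product of smooth varieties, $T_W = p^{*}T_Y\oplus q^{*}T_{\cM}$, and restricting to $Y$ gives $\iota^{*}T_W = T_Y \oplus \bigl(T_{[f]}\cM\bigr)\otimes\mathcal{O}_Y$. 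By the standard deformation theory of $\operatorname{Mor}(Y,Z)$ (here we use that $Y$ is projective and $Z$ is smooth), $T_{[f]}\cM = H^{0}(Y,f^{*}T_Z)$, so $\iota^{*}T_W = T_Y\oplus H^{0}(f^{*}T_Z)\otimes\mathcal{O}_Y$.

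The first real step is to identify $d\pi$ after restriction to $Y$. Since $\pi\circ\iota = f$, the component of $\iota^{*}(d\pi)\from \iota^{*}T_W\to \iota^{*}\pi^{*}T_Z = f^{*}T_Z$ on the summand $T_Y$ is $df$. The component on $H^{0}(f^{*}T_Z)\otimes\mathcal{O}_Y$ is the evaluation map $\operatorname{ev}$: this is precisely the content of the identification $T_{[f]}\cM = H^{0}(f^{*}T_Z)$, since a tangent vector to $\cM$ at $[f]$ is by definition the first-order deformation $f_{\epsilon}$ of $f$ whose associated section of $f^{*}T_Z$ is $y\mapsto \tfrac{d}{d\epsilon}\big|_{0}f_{\epsilon}(y)$, and composing $f_{\epsilon}$ with $\pi$ and differentiating returns that same section. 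Thus $\iota^{*}(d\pi) = (df,\operatorname{ev})$ under the splitting above. Because $f^{*}T_Z$ is globally generated, $\operatorname{ev}$ is surjective, hence so is $\iota^{*}(d\pi)$; as $W$ and $Z$ are smooth, this means $\pi$ is smooth in an open neighborhood of $Y$ in $W$, so there $T_{W/Z} = \ker(d\pi)$ is locally free and fits into a short exact sequence of vector bundles $0\to T_{W/Z}\to T_W\to \pi^{*}T_Z\to 0$, which restricts to an exact sequence on $Y$:
\[
  0\to \iota^{*}T_{W/Z}\to T_Y\oplus H^{0}(f^{*}T_Z)\otimes\mathcal{O}_Y \xrightarrow{\ (df,\,\operatorname{ev})\ } f^{*}T_Z\to 0 .
\]

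Finally, let $M:=\ker(\operatorname{ev})$ be the Lazarsfeld--Mukai bundle of $f^{*}T_Z$. Since $M$ maps to $0$ in $f^{*}T_Z$, it lies inside $\iota^{*}T_{W/Z} = \ker(df,\operatorname{ev})$, and the inclusion of the second summand $H^{0}(f^{*}T_Z)\otimes\mathcal{O}_Y\hookrightarrow T_Y\oplus H^{0}(f^{*}T_Z)\otimes\mathcal{O}_Y$ defines a map from the defining short exact sequence $0\to M\to H^{0}(f^{*}T_Z)\otimes\mathcal{O}_Y\xrightarrow{\operatorname{ev}} f^{*}T_Z\to 0$ of $M$ to the displayed sequence for $\iota^{*}T_{W/Z}$, compatible with the identity on $f^{*}T_Z$. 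The snake lemma applied to this map of short exact sequences (equivalently, the eight lemma as in the proof of \Cref{lem-LazMuk}) yields $0\to M\to \iota^{*}T_{W/Z}\to T_Y\to 0$, exhibiting $T_{Y\times\cM/Z}|_{Y\times\{[f]\}}$ as an extension of $T_Y$ by the Lazarsfeld--Mukai bundle of $f^{*}T_Z$, as desired.

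The only step that is not purely formal is identifying the $\cM$-directions of $d\pi|_Y$ with the evaluation map, i.e. making precise the deformation theory of $\operatorname{Mor}(Y,Z)$; once that is in hand the rest is the same homological bookkeeping as in \Cref{lem-LazMuk}, together with the observation that global generation of $f^{*}T_Z$ is exactly what makes $\pi$ smooth along $Y\times\{[f]\}$, so that the relative tangent sheaf behaves well under restriction.
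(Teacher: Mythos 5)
Your argument is correct and follows essentially the same route as the paper: decompose $T_{Y\times\cM}|_{Y\times\{[f]\}}$ as $T_Y\oplus H^0(f^{*}T_Z)\otimes\OO_Y$, identify the restricted differential of $\pi$ with $(df,\operatorname{ev})$, and extract the extension $0\to K\to T_{Y\times\cM/Z}|_{Y\times\{[f]\}}\to T_Y\to 0$ by a diagram chase against the defining sequence of the Lazarsfeld--Mukai bundle. The only difference is cosmetic: you justify the exactness of the restricted relative tangent sequence via smoothness of $\pi$ along the fiber (a worthwhile remark the paper leaves implicit), while the paper organizes the chase as a $3\times 3$ diagram rather than a snake-lemma application.
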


\begin{proof}
By definition, we have the short exact sequence
\begin{align*}
    0\to T_{Y\times\cM/Z}|_{Y\times\{[f]\}}\to T_{Y\times\cM/Z}|_{Y\times\{[f]\}}\to f^{*}T_Z\to 0. 
\end{align*}
We have the natural decomposition $T_{Y\times\cM/Z}|_{Y\times\{[f]\}}\cong H^0(f^{*}T_Z)\otimes \cO\oplus T_Y$, with respect to which the natural map $T_{Y\times\cM/Z}|_{Y\times\{[f]\}}\to f^{*}T_Z$ is $\operatorname{ev}+df$.   Consider the following commutative diagram, where $K$ is the Lazarsfeld-Mukai bundle of $f^{*}T_Z$:
\begin{center}
    \begin{tikzcd}
     & 0 &0 &0 &\\
     0 \ar[r]& T_Y \ar[r,equal] \ar[u] & T_Y \ar[u] \ar[r] & 0 \ar[u]\ar[r] & 0\\
     0 \ar[r]& T_{Y\times\cM/Z}|_{Y\times\{[f]\}} \ar[r]\ar[u] & H^0(f^{*}T_Z)\otimes \cO\oplus T_Y \ar[u] \ar[r,"\operatorname{ev}+df"] & f^{*}T_Z\ar[r] \ar[u] & 0\\
     0 \ar[r]& K \ar[r] \ar[u] & H^0(f^{*}T_Z)\otimes \cO \ar[u] \ar[r,"\operatorname{ev}"] & f^{*}T_Z \ar[u,equal] \ar[r]& 0\\
          & 0\ar[u] &0\ar[u] &0\ar[u] &
    \end{tikzcd}
\end{center}
The rows and columns are exact and the left column gives $T_{Y\times\cM/Z}|_{Y\times\{[f]\}}$ as an extension of $K$ by $T_Y$. 
\end{proof}

\begin{lemma}
\label{lem-O-1}
The Lazarsfeld-Mukai bundle of any globally generated vector bundle on $\PP^1$ is a direct sum of $\cO(-1)$'s.
\end{lemma}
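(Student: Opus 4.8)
The plan is to reduce everything to Grothendieck's splitting theorem together with an $h^0$ computation on $\PP^1$. Let $V$ be a globally generated vector bundle on $\PP^1$ and let $K$ denote its Lazarsfeld--Mukai bundle, so that $0 \to K \to H^0(V)\otimes\O_{\PP^1} \to V \to 0$. First I would observe that $K$ is locally free, being the kernel of a surjection of vector bundles on a smooth curve, so by Grothendieck $K \cong \bigoplus_j \O(b_j)$ for some integers $b_j$; the goal is to show that every $b_j = -1$.

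Next I would pin down the two numerical invariants of $K$ from the defining sequence. Since $V$ is globally generated on $\PP^1$ it splits as $V \cong \bigoplus_i \O(a_i)$ with all $a_i \geq 0$, whence $h^0(V) = \rk(V) + \deg(V)$; write $d := \deg(V) \geq 0$. Taking ranks in the defining sequence gives $\rk(K) = h^0(V) - \rk(V) = d$, and taking first Chern classes gives $\deg(K) = -\deg(V) = -d$. Thus $K$ has exactly $d$ summands $\O(b_j)$, with $\sum_j b_j = -d$.

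The remaining input is that $H^0(K) = 0$. This follows from the long exact cohomology sequence of the defining sequence: the induced map $H^0\big(H^0(V)\otimes\O_{\PP^1}\big) \to H^0(V)$ is, under the identification $H^0(\O_{\PP^1}) = \CC$, the canonical isomorphism of $H^0(V)$ with itself, hence injective, forcing $H^0(K) = 0$. Therefore $b_j \leq -1$ for every $j$, and combined with $\sum_j b_j = -d$ over exactly $d$ summands this forces $b_j = -1$ for all $j$, i.e. $K \cong \O(-1)^{\oplus d}$ (and $K = 0$ vacuously when $d = 0$). I do not anticipate any genuine obstacle here; the only points requiring a little care are the bookkeeping that global generation on $\PP^1$ makes the equality $h^0(V) = \rk(V) + \deg(V)$ hold exactly, and the trivial edge case $d = 0$ where $V$ is already trivial.
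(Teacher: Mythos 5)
Your proof is correct and takes essentially the same route as the paper's: both identify the kernel by computing its rank and degree and by showing $H^0(K)=0$ from the fact that the evaluation map is an isomorphism on global sections, which on $\PP^1$ forces a direct sum of $\O(-1)$'s. The only cosmetic difference is that the paper first reduces to line bundles $\O(a)$, $a\geq 0$, via compatibility of Lazarsfeld--Mukai bundles with direct sums, whereas you run the same count directly on $V$ using Grothendieck splitting and the equality $h^0(V)=\rk(V)+\deg(V)$.
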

\begin{proof}
Taking Lazarsfeld-Mukai bundles behaves well with respect to direct sum, so it remains to show the result for line bundles $\cO(a)$ with $a \geq 0$. The Lazarsfeld-Mukai bundle $M$ satisfies
\[ 0 \to M \to \cO \otimes H^0(\cO(a)) \to \cO(a) \to 0 .  \]
It follows that $M$ has rank $a$, degree $-a$ and no global sections, so that $M= \cO(-1)^a$. The result follows.
\end{proof}

\begin{proof}[Proof of Theorem \ref{thm-GrauertMulich}]
We apply \Cref{integrate} to our situation, where $\cM$ is an open subset of $\operatorname{Mor}(\mathbb{P}^1,Z)$ containing $[f]$ and $\cU=\PP^1\times \cM$. Then, applying \Cref{reltmor} shows $T_{\PP^1\times\cM/Z}|_{\PP^1\times\{[f]\}}$ is the direct sum of the Lazersfeld Mukai bundle of $f^{*}T_Z$ with $T_{\PP^1}$. By \Cref{lem-O-1}, the Lazersfeld Mukai bundle of $f^{*}T_Z$ is a sum of $\OO(-1)$'s, so $T_{\PP^1\times\cM/Z}|_{\PP^1\times\{[f]\}}$ is an extension of $\OO(2)$ by a direct sum of $\OO(-1)$'s implying $\mu^{\min}(T_{\PP^1\times\cM/Z}|_{\PP^1\times\{[f]\}}) \geq -1$. 

Suppose $f^{*}E$ splits as $\bigoplus_i \cO(a_i)$ with $a_1 \geq \cdots \geq a_r$ and $a_j \leq a_{j+1}-2$. Letting $S=\oplus _{i\leq j}\cO(a_i)$, we find 
\begin{align*}
    a_j + (-1) &>a_{j+1}\\ 
    \mu^{\min}(S)+\mu^{\min}(T_{\PP^1\times\cM/Z}|_{\PP^1\times\{[f]\}})&>\mu^{\max}((f^{*}E)/S),
\end{align*}
Therefore, we can apply  \Cref{integrate} and conclude. 
\end{proof}

\begin{proof}[Proof of \Cref{lem:slopesOfRestrictionsTokPlanes}]
This follows from \Cref{integrate} with $Z=\mathbb{P}^n$, $\cM=\mathbb{G}(k,n)$ and $\cU$ the universal $k$-plane. The only thing to check is $\mu^{\min}(T_{\cU/\mathbb{P}^n}|_\Lambda)=-\frac{1}{k}$. By \Cref{lem-LazMuk}, $T_{\cU/\mathbb{P}^n}|_\Lambda$ lies in the sequence
\begin{align*}
    0\to T_{\cU/\mathbb{P}^n}|_{\Lambda}\to H^0(\OO_{\Lambda}(1)^{n-k})\otimes \OO_{\Lambda}\to \OO_{\Lambda}(1)^{n-k} \to 0,
\end{align*}
and so is isomorphic to $\Omega_{\Lambda}(1)^{n-k}$ by the Euler sequence. Since $\Omega_{\Lambda}(1)$ is semistable with slope $-\frac{1}{k}$ \cite[Theorem 1.3.2]{OSS80}, the result follows. 
\end{proof}

\section{Plane curves}
We now apply the results from the previous section to analyze the map to moduli $\Phi$ introduced in \Cref{subsection:maptomoduli} in the case of plane curves. Throughout this section, $C$ in $\PP^2$ denotes a reduced plane curve. (In the non-reduced case, we simply pass to the reduction and apply the results of this section.) Our main results in this section are stated below.

\begin{theorem}
\label{thmplanecurves}
Let $C$ be a reduced plane curve of degree $d$. Then, the map
\begin{align*}
    \Phi: \operatorname{Mor}_e(\mathbb{P}^1,\mathbb{P}^2)&\dashrightarrow \mathbb{P}(H^0(\OO_{\PP^1}(ed)))\\
    [\iota]&\mapsto [\iota^{-1}(C)]
\end{align*}
has maximal rank if $C$ has finite stabilizer under the action of $PGL_{3}$. 
\end{theorem}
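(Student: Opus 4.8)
The plan is to identify the fibers of $\Phi$ infinitesimally with sections of a pulled-back log tangent sheaf, and then to use the Grauert–Mulich-type result (Proposition \ref{thm-GrauertMulich}) to show that if the fiber were too large, $C$ would be preserved by a global vector field on $\PP^2$.

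Let me think about how to implement this. First I would reduce to the case $d \geq 2$ (for $d \leq 1$ there is nothing to check, or the statement is vacuous). By Lemma \ref{logtangent}, for a general $\iota \in \operatorname{Mor}_e(\PP^1,\PP^2)$, the tangent space to the fiber of $\Phi$ through $[\iota]$ is $H^0(\PP^1, \iota^* T_{\PP^2}(-\log C))$ modulo the subspace coming from reparametrizations of $\PP^1$ composed with the $\Gm$-rescaling, i.e. modulo $H^0(\PP^1, \iota^* T_{\PP^2})^{\mathrm{triv}}$... more precisely modulo the image of the automorphisms of $\PP^1$ and the overall scaling. So $\Phi$ has maximal rank at a general point precisely when $h^0(\iota^* T_{\PP^2}(-\log C))$ equals the expected value, which is $\min$ of $\dim \operatorname{Mor}_e$ and $\dim \PP H^0(\OO_{\PP^1}(ed))$, shifted by the automorphism count. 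The key numerical point: $\iota^* T_{\PP^2}(-\log C)$ is a rank $2$ bundle on $\PP^1$ of degree $3e - ed = e(3-d)$ (using the exact sequence $0 \to T_{\PP^2}(-\log C) \to T_{\PP^2} \to \OO_C(d) \to \cdots$, so $c_1(T_{\PP^2}(-\log C)) = c_1(T_{\PP^2}) - d H = (3-d)H$), so it splits as $\OO(a_1) \oplus \OO(a_2)$ with $a_1 + a_2 = e(3-d)$ and $a_1 \geq a_2$. The failure of maximal rank is governed by $a_1$ being too large, i.e. $a_1 \geq a_2 + 2$.

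Next I would invoke Guenancia's semistability (cited in the introduction): when $(\PP^2, C)$ is log-canonical and $d \geq 4 = n+2$, the sheaf $T_{\PP^2}(-\log C)$ is semistable. But here $C$ is an arbitrary reduced plane curve with finite $\PGL_3$-stabilizer, not necessarily log-canonical, so I cannot directly quote that — instead I would run the Grauert–Mulich argument directly. Apply Proposition \ref{thm-GrauertMulich} with $Z = \PP^2$, $E = T_{\PP^2}(-\log C)$ (a reflexive, hence torsion-free, sheaf on $\PP^2$), and $f = \iota$ a general degree-$e$ map $\PP^1 \to \PP^2$; one checks $f^*T_{\PP^2}$ is globally generated (it is $\OO(e)^{\oplus ?}$-type, a quotient of $\OO(e)^{\oplus 3}$, indeed globally generated for $e \geq 1$) and the connected-fibers hypothesis holds for the universal family over $\operatorname{Mor}_e(\PP^1,\PP^2)$. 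Then if $a_1 \geq a_2 + 2$, Proposition \ref{thm-GrauertMulich} produces a rank-$1$ subsheaf $L \subset T_{\PP^2}(-\log C)$ of degree $\tfrac{1}{e}a_1 \geq \tfrac{1}{2}(3-d) + \tfrac12 > \tfrac{3-d}{2} = \tfrac12 \mu(T_{\PP^2}(-\log C))\cdot 2 / 2$... more carefully $\deg L = a_1/e > (3-d)/2 = \mu(T_{\PP^2}(-\log C))$, i.e. $T_{\PP^2}(-\log C)$ is destabilized by $L$.

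The final step — and the one I expect to be the main obstacle — is to extract geometric consequences from the destabilizing subsheaf $L \subset T_{\PP^2}(-\log C) \subset T_{\PP^2}$. A rank-$1$ subsheaf of $T_{\PP^2}$ of degree $\geq (4-d)/2$... the point is that $L \hookrightarrow T_{\PP^2}(-\log C)$ is a saturated rank-one subsheaf, hence of the form $\OO_{\PP^2}(m) \otimes I_W$ for some $m$ and some finite (or empty) subscheme $W$; one must bound $m$ and then interpret a nonzero map $\OO_{\PP^2}(m) \to T_{\PP^2}(-\log C)$ as a global vector field on $\PP^2$ (twisted) tangent to $C$. When $m \geq 0$ this directly gives a nonzero global section of $T_{\PP^2}(-\log C)$ (or of $T_{\PP^2}(-\log C)(m)$ with $m$ controlled), hence — since such a section is a vector field on $\PP^2$ tangent to $C$, and $\PP^2$ vector fields integrate to one-parameter subgroups of $\PGL_3$ — a positive-dimensional subgroup of the stabilizer of $C$, contradicting the finite-stabilizer hypothesis. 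The delicate bookkeeping is: showing the destabilizing slope actually forces $m \geq 0$ (this is where $d$ being large is not assumed, so one needs the refined degree bound $\deg L \geq (3-d)/2 + 1/2$ from the "gap $\geq 2$" hypothesis, combined with $\deg T_{\PP^2}(-\log C)/L \leq (3-d)/2 - 1/2$, to pin down $m$), and handling the torsion/non-saturated cases and the case where the section vanishes on a curve rather than points. This reduction of maximal rank to the classification of vector fields on $\PP^2$ preserving $C$ (i.e. to Theorem/Classification \ref{classification}, which lists exactly the curves admitting such a vector field and shows they all have infinite stabilizer) is the heart of the argument, and assembling the inequalities cleanly is the part requiring the most care.
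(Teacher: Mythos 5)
Your overall strategy is the paper's: use \Cref{logtangent} to identify the tangent space of the fiber of $\Phi$ at a general $\iota$ with $H^0(\iota^{*}T_{\PP^2}(-\log C))$ (note: exactly this space, with no quotient by reparametrizations of $\PP^1$, since $\Phi$ is defined on $\Mor_e(\PP^1,\PP^2)$ itself), then apply \Cref{thm-GrauertMulich} to convert a splitting gap of the pulled-back rank-two bundle into a rank-one subsheaf of $T_{\PP^2}(-\log C)$ of controlled degree, and finally conclude that $C$ is invariant under a nonzero vector field. Your endgame is a legitimate shortcut: a nonzero section of $T_{\PP^2}(-\log C)$ is a nonzero element of $\mathfrak{sl}_3$ whose action sends the equation $F$ of $C$ into the line $\langle F\rangle$, so the associated one-parameter subgroup of $\PGL_3$ stabilizes $C$, contradicting finiteness of the stabilizer; the paper instead passes through \Cref{O1vectorfield} and \Cref{integralcurve}, which it needs anyway for the classification in \Cref{classification}. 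Also, for $d\leq 2$ the statement is vacuous (every reduced curve of degree at most $2$ has positive-dimensional stabilizer), so only $d\geq 3$ requires an argument.

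The genuine gap is in the step that is supposed to force the line subsheaf to have non-negative degree. You propose to extract $m\geq 0$ from the bound $\deg L\geq \tfrac{3-d}{2}+\tfrac12$ coming from the gap-$\geq 2$ hypothesis; but for $d\geq 5$ that quantity is negative, so this only shows $L$ destabilizes $T_{\PP^2}(-\log C)$, which is no contradiction (instability of $T_{\PP^2}(-\log C)$ is perfectly compatible with a finite stabilizer, e.g.\ for non-log-canonical curves). Likewise, your criterion ``failure of maximal rank means $a_1\geq a_2+2$'' is only a necessary condition and does not by itself give $a_1\geq 0$. The correct bookkeeping, which is what the paper carries out case by case, is to translate failure of maximal rank into a lower bound on $h^0$: with $E=\iota^{*}T_{\PP^2}(-\log C)=\OO(a_1)\oplus\OO(a_2)$ one has $\chi(E)=e(3-d)+2=\dim\Mor_e(\PP^1,\PP^2)-\dim\PP H^0(\OO_{\PP^1}(ed))$, and $\Phi$ fails to have maximal rank precisely when $h^0(E)>\max(0,\chi(E))$, which is equivalent to $a_1\geq 0$ and $a_2\leq -2$ (concretely: $h^0>0$ when $d\geq 5$; $h^0\geq 2$, i.e.\ $a_1\geq 1$, when $d=4,e=1$; $h^0\geq 3$, i.e.\ $a_1\geq 2$, when $d=3$). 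Only with $a_1\geq 0$ in hand does \Cref{thm-GrauertMulich}, whose degree output is $a_1/e\geq 0$, produce after saturation a line bundle $\OO(m)\hookrightarrow T_{\PP^2}(-\log C)$ with $m\geq 0$, hence a nonzero section and your contradiction. With that fix your argument goes through and agrees with the paper's.
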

In fact, we can classify all cases in \Cref{thmplanecurves} where $\Phi$ does not have maximal rank.
\begin{theorem}
\label{classification}
Furthermore we get a complete classification of cases when $\Phi$ in \Cref{thmplanecurves} does not have maximal rank:
\begin{enumerate}
    \item $d\geq 5$: $C$ is a union of orbits under an action of $\mathbb{G}_m$ or $\mathbb{G}_a$ on $\PP^2$
    \item $d=4$: 
    \begin{enumerate}
        \item $e=1$ and $C$ is the union of four concurrent lines
        \item $e\geq 2$ and $C$ is a union of orbits under an action of $\mathbb{G}_m$ or $\mathbb{G}_a$ on $\PP^2$
    \end{enumerate}
    \item $d=3$: $e\geq 2$ and $C$ is union of concurrent lines.
\end{enumerate}
\end{theorem}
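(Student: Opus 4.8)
I would approach the classification in Theorem~\ref{classification} by combining the Grauert–Mulich machinery from \S\ref{sec:Grauert-Mulich} with a direct analysis of vector fields on $\PP^2$. The starting point is Lemma~\ref{logtangent}: the kernel of the derivative of $\Phi$ at $[\iota]$ is $H^0(\iota^*T_{\PP^2}(-\log C))$ modulo the $1$-dimensional subspace coming from reparametrizing $\PP^1$ (when $k=1$; in general one subtracts the automorphisms of $\PP^1$). So $\Phi$ fails to have maximal rank precisely when, for $\iota$ general of degree $e$, the bundle $\iota^*T_{\PP^2}(-\log C)$ has ``too many'' global sections — quantitatively, more than $\max\{0, \dim\Mor_e - \dim\PP H^0(\OO_{\PP^1}(ed))\} + \dim\Aut(\PP^1)$ of them. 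The first step is therefore to compute $\chi$ and the expected splitting type of $\iota^*T_{\PP^2}(-\log C) = \bigoplus_{i=1}^2 \OO(a_i)$. Using the defining sequence $0 \to T_{\PP^2}(-\log C) \to T_{\PP^2} \to \OO_C(C) \to \OO_{C_{\sing}}(C)\to 0$, one gets $\deg(T_{\PP^2}(-\log C)) = 3 - d$ (up to the $\OO(1)$-normalization), hence $a_1 + a_2 = e(3-d)$, and $\iota^*T_{\PP^2}(-\log C)$ is locally free away from where $\iota$ meets $C_{\sing}$, which a general $\iota$ avoids unless $\dim C_{\sing} \geq 0$ forces intersection — i.e. always, since $C_{\sing}$ is finite, a general rational curve of degree $e \geq 1$ misses it. So $\iota^*T_{\PP^2}(-\log C)$ is a rank-$2$ bundle on $\PP^1$ of degree $e(3-d)$.

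**Next steps.** Now I would apply Theorem~\ref{thm-GrauertMulich} (Grauert–Mulich) with $Z = \PP^2$, $E = T_{\PP^2}(-\log C)$: if $E$ is semistable, then $|a_1 - a_2| \leq 1$, so the splitting type is balanced and one computes exactly $h^0$, finding it equals the expected value — hence $\Phi$ has maximal rank. Thus $\Phi$ fails to have maximal rank only if $E = T_{\PP^2}(-\log C)$ is \emph{not} semistable, i.e. admits a destabilizing subsheaf. Since $E$ is a reflexive (here, rank $2$) sheaf of slope $\tfrac{3-d}{2}$, a destabilizing subsheaf is (the saturation of) a line subbundle $\OO(m) \hookrightarrow E$ with $m > \tfrac{3-d}{2}$, equivalently $m \geq \lceil \tfrac{3-d}{2}\rceil$, equivalently $2m \geq 4-d$. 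A nonzero map $\OO(m) \to T_{\PP^2}(-\log C)$ with $m \geq 0$ is exactly a vector field on $\PP^2$ (twisted by $\OO(m)$) tangent to $C$; since $h^0(T_{\PP^2}(t)) = 0$ for $t < -1$ and the relevant $m$ can in principle be negative for small $d$, one has to be slightly careful, but the upshot is: for $d \geq 4$ the destabilizing $m$ forces $m \geq 0$ in the range of interest only after checking $4 - d \leq 0$ fails — so in fact for $d = 3, 4$ one allows $m = 0$, and for $d \geq 5$ destabilization would need $m \leq \tfrac{3-d}{2} < -1$, handled by twisting: $\OO(m) \hookrightarrow E \subset T_{\PP^2}$ with $m \geq \lceil\tfrac{4-d}{2}\rceil$; tensoring the inclusion $E \subset T_{\PP^2}(-\log C)$ appropriately, a destabilizing line bundle still yields a nonzero global vector field on $\PP^2$ with zeros/poles constrained along $C$, i.e. $C$ is an \emph{invariant curve} (union of orbit closures) of a nonzero vector field. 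Then I invoke the classification of vector fields on $\PP^2$: a nonzero global vector field on $\PP^2$ comes from a nonzero element of $\mathfrak{sl}_3$ (or $\mathfrak{pgl}_3$), and after Jordan-normal-form analysis over $\CC$, its integral curves are organized by conjugacy class — diagonalizable (torus, the $\mathbb{G}_m$-orbit picture) versus nilpotent/parabolic (the $\mathbb{G}_a$-orbit picture, quadritangent conics). This is where the pictures in Figure~\ref{plots} come from: $C$ must be a union of closures of orbits of a $1$-parameter subgroup $\mathbb{G}_m$ or $\mathbb{G}_a \subset PGL_3$.

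**The low-degree and exceptional cases.** The cases $d = 3, 4$ require separate bookkeeping because there the generic-behavior dimension count is different: for $d = 3$, $\dim\PP H^0(\OO_{\PP^1}(3e))$ versus $\dim \Mor_e(\PP^1,\PP^2)$ flips sign depending on $e$, and similarly at $d = 4$, $e = 1$ the source and target both have the same dimension (so maximal rank means dominant/generically finite), which is exactly why ``four concurrent lines'' appears as an isolated exception: four concurrent lines have a $1$-dimensional stabilizer (the $\mathbb{G}_m$ fixing the two cross-ratio-determined points on each transversal, or rather the torus fixing the common point and acting on the pencil), but is \emph{not} literally a union of $\mathbb{G}_m$- or $\mathbb{G}_a$-orbits in the strong sense needed for $e \geq 2$; one must check by hand that for $e = 1$ the line-section map $\PP^{2*} \dashrightarrow \overline{M}_{0,4}/S_4$ (cross-ratio of four points) is constant, and classify when the cross-ratio of the four intersection points of a line with $C$ is independent of the line — a classical fact giving exactly concurrent lines (and for $d=4$, four concurrent lines; for $d = 3$, three concurrent lines, i.e. a degenerate cubic). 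So the final step is: (i) show the stabilizer being finite is equivalent, in the failure locus, to $C$ not being a union of $\mathbb{G}_m/\mathbb{G}_a$-orbits, which closes the loop between Theorem~\ref{thmplanecurves} and Theorem~\ref{classification}; (ii) for each degree $d$ and each $e$, match the numerical failure condition $h^0(\iota^*T_{\PP^2}(-\log C)) > \text{expected}$ against the vector-field classification to produce exactly the list.

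**Main obstacle.** The technical heart — and the step I expect to be the real obstacle — is passing from ``$T_{\PP^2}(-\log C)$ is not semistable'' to a \emph{usable} geometric statement: controlling the destabilizing subsheaf precisely enough (its degree, and the fact that it genuinely integrates to a vector field preserving $C$ rather than just an abstract subsheaf), and then matching this against the orbit-closure description while handling the boundary degrees $d = 3, 4$ where the expected-dimension inequality degenerates and spurious-looking cases (four concurrent lines at $e=1$) must be separated out. Grauert–Mulich (Theorem~\ref{thm-GrauertMulich}) gives the clean reduction to semistability, but the reverse implication — non-semistable $\Rightarrow$ $\Phi$ degenerate — requires genuinely computing $h^0$ of unbalanced rank-$2$ bundles on $\PP^1$ and comparing to the dimension counts case by case, which is where the enumeration in Theorem~\ref{classification} gets its precise shape.
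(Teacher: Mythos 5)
Your overall scaffolding (pull back $T_{\PP^2}(-\log C)$ to a general degree $e$ rational curve, use \Cref{logtangent} to identify the kernel of $d\Phi$, apply \Cref{thm-GrauertMulich}, and finish with a Jordan-form analysis of vector fields on $\PP^2$) is the right one, but the central reduction you propose has a genuine gap. You reduce failure of maximal rank to ``$T_{\PP^2}(-\log C)$ is not semistable'' and then claim that a destabilizing line subsheaf $\OO(m)\hookrightarrow T_{\PP^2}(-\log C)$, $m>\tfrac{3-d}{2}$, ``still yields a nonzero global vector field on $\PP^2$'' after twisting, so that $C$ is a union of orbit closures. That last step is false: for $d\geq 6$ the slope inequality only forces $m\geq -1$ (and for larger $d$, even more negative $m$), and a subsheaf $\OO(m)\subset T_{\PP^2}$ with $m<0$ is a foliation of degree $1-m\geq 2$, not a vector field; invariant curves of higher-degree foliations are in no way confined to unions of $\mathbb{G}_m$- or $\mathbb{G}_a$-orbit closures, so the classification does not follow from non-semistability alone. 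The information you are throwing away is exactly what \Cref{thm-GrauertMulich} provides: failure of maximal rank forces $h^0(\iota^{*}T_{\PP^2}(-\log C))$ to exceed the relevant threshold, and since $\deg \iota^{*}T_{\PP^2}(-\log C)=(3-d)e\leq -2$ in the main cases, the splitting is $\OO(a)\oplus\OO(b)$ with $a\geq 0$ and $a-b\geq 2$; Grauert--Mulich then produces a rank-one subsheaf of $T_{\PP^2}(-\log C)$ of degree $a/e\geq 0$ (not merely of slope above $\tfrac{3-d}{2}$), whose saturation is $\OO(k)$ with $k\geq 0$, i.e.\ a genuine section of $T_{\PP^2}(-\log C)$ (or a map from $\OO(1)$ in the threshold cases). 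Only then does the vector-field/orbit analysis (\Cref{integralcurve}, and \Cref{O1vectorfield} for concurrent lines) apply. The paper never passes through the semistable/unstable dichotomy for the plane-curve case; semistability is only used in the higher-dimensional sections, where it is imported from Guenancia for smooth $X$ and $d\geq n+2$, hypotheses unavailable here.

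There are also quantitative slips in your bookkeeping that matter for the precise list. By \Cref{logtangent} the kernel of $d\Phi$ at $\iota$ is exactly $H^0(\iota^{*}T_{\PP^2}(-\log C))$; there is no ``modulo reparametrization'' or ``$+\dim\Aut(\PP^1)$'' correction, since the fibers of $\Phi$ (whose target is the space of degree $de$ divisors on $\PP^1$, not its $SL_2$-quotient) are not $\Aut(\PP^1)$-invariant. The correct thresholds are: $h^0\geq 1$ for $d\geq 5$ or ($d=4$, $e\geq 2$); $h^0\geq 2$ for $d=4$, $e=1$ (here $\dim\Mor_1(\PP^1,\PP^2)=5>4=\dim \PP H^0(\OO_{\PP^1}(4))$, so source and target do not have equal dimension as you assert); and $h^0\geq 3$ for $d=3$, $e\geq 2$. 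In particular the $d=4$, $e=1$ exception is not established by appealing to a ``classical'' constant-cross-ratio fact (which you would in any case have to prove); in the paper it falls out of the same mechanism: $h^0\geq 2$ with total degree $-1$ forces $a\geq 1$, Grauert--Mulich gives $\OO(1)\to T_{\PP^2}(-\log C)$, and \Cref{O1vectorfield} shows this happens exactly for unions of concurrent lines. Your $d=2$ and $d=3$, $e=1$ cases also need the (easy) separate arguments the paper gives, but those are minor compared with the destabilization issue above.
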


Before giving the proofs of these theorems, we need the following two propositions.

\begin{proposition}
\label{O1vectorfield}
If $C$ is a reduced plane curve and $T_{\PP^2}(-\log C)$ admits a nontrivial homomorphism from $\OO_{\PP^2}(1)$, then $C$ is a union of concurrent lines.
\end{proposition}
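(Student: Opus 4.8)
The plan is to translate the hypothesis into an explicit statement about a vector field on $\PP^2$ and then argue that its integral curve is forced to be a union of lines through a single point. A nontrivial homomorphism $\OO_{\PP^2}(1) \to T_{\PP^2}(-\log C)$ composed with the inclusion $T_{\PP^2}(-\log C) \hookrightarrow T_{\PP^2}$ gives a nonzero map $\OO_{\PP^2}(1) \to T_{\PP^2}$, i.e. a global section $\theta$ of $T_{\PP^2}(-1)$, equivalently a vector field on $\PP^2$ of the lowest possible degree. Concretely, writing homogeneous coordinates $[x_0:x_1:x_2]$, such a section is represented by $\theta = \sum_i \ell_i \,\partial/\partial x_i$ with $\ell_i$ linear forms, modulo the Euler vector field; such $\theta$ is the infinitesimal generator of a one-parameter subgroup of $\PGL_3$. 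The condition that the map factors through $T_{\PP^2}(-\log C)$ says exactly (via the description of the log tangent sheaf in \Cref{logtangent}, or directly) that $\theta(F) \in (F)$ where $F$ is the defining equation of $C$; that is, $\theta$ is tangent to $C$, so $C$ is invariant under the flow of $\theta$.

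The core of the argument is then a classification of such $\theta$ up to $\PGL_3$. A linear vector field on $\PP^2$ corresponds to a traceless $3\times 3$ matrix $M$ (acting projectively), and up to conjugation $M$ is in Jordan form; the possible flows are the $\G_m$-actions with weights $(a,b,c)$, $a+b+c=0$, and the $\G_a$-actions coming from nilpotent $M$ (a single Jordan block, or a block of size $2$ plus a $1\times 1$ block). In every case one writes down the invariant curves: for a diagonalizable action the invariant polynomials are spanned by monomials $x_0^{i}x_1^{j}x_2^{k}$, so an invariant reduced curve is a union of coordinate lines and possibly curves $\{x_i^p = c\, x_j^q x_k^r\}$ — but I need to rule the latter out. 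Here is the key point: I would use that the hypothesis is about a \emph{general} line / that the relevant invariance must hold for the specific $C$ in the proposition, and combine it with the observation that if $C$ is $\theta$-invariant and \emph{not} a union of concurrent lines, then the stabilizer argument or a direct degree count produces a contradiction. Actually the cleanest route: show the fixed locus of the $\G_m$ or $\G_a$ action on $\PP^2$ forces $C$, being a union of orbit closures, to be a union of lines, and then that these lines are concurrent because every orbit closure of a one-parameter subgroup of $\PGL_3$ that is a line passes through a common fixed point (the source or sink). For the nilpotent (unipotent) case the unique fixed point lies on every invariant line, giving concurrency immediately; for the semisimple case one checks the three fixed points and sees that a one-dimensional invariant linear subspace is a line through two of them, but a reduced invariant curve of higher degree cannot be a union of such lines unless the weights degenerate, reducing to the concurrent case.

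The main obstacle I anticipate is precisely ruling out the "higher-degree invariant curve" possibility for the $\G_m$-action — i.e. showing that a reduced curve invariant under a nontrivial $\G_m \subset \PGL_3$ must actually be a union of lines through a fixed point, rather than something like $\{xy = z^2\}$ which is $\G_m$-invariant but a smooth conic. I suspect the resolution is that the proposition's hypothesis is stronger than bare invariance: a homomorphism $\OO_{\PP^2}(1) \to T_{\PP^2}(-\log C)$ means $\theta$ lies in $H^0(T_{\PP^2}(-\log C)(-1))$, and one should check which invariant curves $C$ make this twisted-down space nonzero. Computing $H^0(T_{\PP^2}(-\log C)(-1))$ from the defining exact sequence of $T_{\PP^2}(-\log C)$ and chasing the resulting cohomology will pin down $C$; I expect the computation to show the section exists only when $C$'s equation is (a power-free product reducing to) a product of linear forms vanishing at a common point. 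So the plan is: (1) produce $\theta \in H^0(T_{\PP^2}(-1))$ tangent to $C$; (2) normalize $\theta$ by $\PGL_3$ into one of finitely many canonical forms; (3) in each case, either directly list invariant reduced curves and observe concurrency, or use the sharper cohomological constraint from the hypothesis to eliminate the non-concurrent possibilities; (4) conclude $C$ is a union of concurrent lines.
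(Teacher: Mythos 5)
There is a genuine gap, and it sits exactly where you flagged it. Your step (1) misidentifies the object produced by the hypothesis: a nonzero map $\OO_{\PP^2}(1)\to T_{\PP^2}$ is an element of $\Hom(\OO_{\PP^2}(1),T_{\PP^2})\cong H^0(T_{\PP^2}(-1))\cong \CC^3$ (twist the Euler sequence by $\OO(-1)$), \emph{not} a vector field $\sum_i \ell_i\,\partial/\partial x_i$ with $\ell_i$ linear modulo Euler --- the latter describes $H^0(T_{\PP^2})\cong \mathfrak{sl}_3$, which is $8$-dimensional. Because of this conflation, your steps (2)--(3) reduce the problem to classifying reduced curves invariant under a one-parameter subgroup of $\PGL_3$, and that classification cannot give the stated conclusion: a smooth conic such as $\{XZ=Y^2\}$ and the curves $X^pY^q=cZ^{p+q}$ are $\mathbb{G}_m$- or $\mathbb{G}_a$-invariant without being unions of concurrent lines (this weaker hypothesis is exactly the setting of \Cref{integralcurve}, whose conclusion is correspondingly weaker). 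You anticipate this and propose to rescue the argument by computing $H^0(T_{\PP^2}(-\log C)(-1))$, but that computation --- which is the entire content of the proposition --- is left as ``I expect the computation to show,'' so the proof is not complete as written.

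The missing step has a short direct resolution, which is the paper's route: applying $\Hom(\OO_{\PP^2}(1),-)$ to the Euler sequence shows the composite $\OO_{\PP^2}(1)\to T_{\PP^2}$ lifts to a map $\OO_{\PP^2}(1)\to\OO_{\PP^2}(1)^3$, i.e.\ is given by a constant vector, which after a linear change of coordinates is the inclusion into the first factor. Hence the hypothesis says that $L\,\partial/\partial X$ lies in $H^0(T_{\PP^2}(-\log C))$ for \emph{every} linear form $L$ --- a single fixed direction, much stronger than invariance under one flow in $\PGL_3$. Away from $[1:0:0]$ the direction $\partial/\partial X$ is then tangent to $C$ at every point, so in the affine chart $\{Z\neq 0\}$ the curve is an integral curve of a constant vector field, i.e.\ a union of lines parallel to the $x$-axis, possibly together with the line at infinity; all of these pass through $[1:0:0]$, giving concurrency at once. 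If you want to keep your classification-based outline instead, you must actually carry out the twisted-down cohomological constraint and show it kills every non-concurrent invariant curve; without that, the argument does not prove \Cref{O1vectorfield}.
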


\begin{proof}
First, a nontrivial map from $\OO_{\PP^2}(1)\to T_{\PP^2}(-\log C)$ induces a nontrivial map $\OO_{\PP^2}(1)\to T_{\PP^2}$. Consider the Euler sequence
\begin{align*}
    0\to \OO_{\PP^2}\to \OO_{\PP^2}(1)^3\to T_{\PP^2} \to 0.
\end{align*}
Applying $\Hom(\OO_{\PP^2}(1),\cdot)$ to the Euler sequence, we find $\Hom(\OO_{\PP^2}(1),T_{\PP^2})\cong \Hom(\OO_{\PP^2}(1),\OO_{\PP^2}(1)^3)$ and that the composite map $\OO_{\PP^2}(1)\to T_{\PP^2}(-\log C) \to T_{\PP^2}$ lifts to a map $\OO_{\PP^2}(1)\to \OO_{\PP^2}(1)^3$.

After a change of coordinates, we can assume the map $\OO_{\PP^2}(1)\to \OO_{\PP^2}(1)^3$ is inclusion into the first factor. 
The map $\OO_{\PP^2}(1)^3\to T_{\PP^2}$ sends a tuple of linear forms $(L_1,L_2,L_3)$ to $(L_1\frac{\partial}{\partial_X},L_2\frac{\partial}{\partial_Y},L_3\frac{\partial}{\partial_Z})$, so we conclude that $L\frac{\partial}{\partial_X}$ is a section of $T_{\PP^2}(-\log C)$ for all linear forms $L$. 

In particular, away from the point $[1:0:0]$, the tangent vector $\frac{\partial}{\partial_X}$ is in the tangent space of $C$ for every point of $C$. Restricting to the affine chart $\{Z\neq 0\}$ with coordinates $(x,y)$ and dehomogenizing, this means $C$ restricts to a union of lines parallel to the $x$-axis. Since these lines and the line at infinity are precisely the lines passing through $[1:0:0]$, we conclude $C$ is a union of concurrent lines. 
\end{proof}

\begin{proposition}
\label{integralcurve}
If $C$ is a reduced plane curve and $T_{\PP^2}(-\log C)$ has a section, then $C$ is equivalent to a union of orbits under one of the  two actions by $\mathbb{G}_m$ and $\mathbb{G}_a$ as follows:
\begin{align*}
    \mathbb{G}_m &\to GL_3\\
    t &\mapsto \begin{pmatrix} t^a & 0 & 0\\ 0 & t^b & 0 \\ 0 & 0 & 1\end{pmatrix} \qquad a,b\in\mathbb{N} \\
       \mathbb{G}_a&\to GL_3\\
    t&\mapsto \operatorname{exp}\left(t\begin{pmatrix} 0 & 1 & 0\\ 0 & 0 & 1\\ 0 & 0 & 0\end{pmatrix}\right) =\begin{pmatrix} 1 & t & \frac{1}{2}t^2\\ 0 & 1 & t \\ 0 & 0 & 1\end{pmatrix}.
\end{align*}
Explicitly, there are two cases:
\begin{enumerate}
    \item $C$ is projectively equivalent to a union of curves of the form $X^{p} Y^{q}=cZ^{p+q}$, $c\in \mathbb{C}^{\times}$, and possibly a subset of the three coordinate lines.
    \item $C$ is projectively equivalent to a union of members of the family $\{XZ-Y^2+cZ^2\mid c\in \mathbb{C}\}$ of conics quadritangent to $\{XZ-Y^2=0\}$ at $[0:0:1]$, and possibly the line $\{Z=0\}$. 
\end{enumerate}
\end{proposition}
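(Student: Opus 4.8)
The plan is to interpret a nonzero section $v \in H^0(T_{\PP^2}(-\log C))$ as a nonzero global vector field on $\PP^2$ that is everywhere tangent to $C$, and then to classify such pairs $(v, C)$ using the well-understood structure of vector fields on $\PP^2$. Concretely, a section of $T_{\PP^2}(-\log C)$ is, via the inclusion $T_{\PP^2}(-\log C) \hookrightarrow T_{\PP^2}$, an element of $H^0(T_{\PP^2}) \cong \mathfrak{sl}_3 \oplus \ker(\text{Euler})$ — more precisely, $H^0(T_{\PP^2})$ is $\mathfrak{pgl}_3$, the Lie algebra of the automorphism group, so $v$ is the infinitesimal generator of a one-parameter subgroup of $\PGL_3$. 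The tangency condition says exactly that the flow of $v$ preserves $C$ set-theoretically (this is the infinitesimal form of the statement in \Cref{logtangent} with $e=1$, $k=1$ replaced by the ambient $\PP^2$, or can be checked directly from the defining equation $v(F) = \lambda F$). Hence $C$ is a union of (closures of) orbits of a one-parameter subgroup $G \subset \PGL_3$, and the task reduces to enumerating one-parameter subgroups of $\PGL_3$ up to conjugacy and reading off which unions of orbit-closures are reduced plane curves of positive degree.

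The key steps, in order: (1) Identify $v$ with an element $M \in \mathfrak{pgl}_3 = \mathfrak{sl}_3 / (\text{center, trivially}) $, i.e., lift to a $3\times 3$ matrix $M$ well-defined up to scalar; note $v(F) = \lambda F$ for the cone equation $F$ means $C$ is $M$-invariant. (2) Put $M$ in Jordan canonical form under $\GL_3$-conjugacy (which descends to $\PGL_3$-conjugacy on $\PP^2$); up to scalar the possibilities are: $M$ diagonalizable with eigenvalues giving a torus action (the $\mathbb{G}_m$ case; after scaling and translating by the central scalar, we may take $\mathrm{diag}(a,b,0)$ with $a,b \in \N$ — one has to argue that only rational ratios $a:b$ produce an \emph{algebraic} curve, since an irrational ratio gives orbits that are not Zariski closed / not algebraic), a single Jordan block of size $3$ (the $\mathbb{G}_a$ case $\exp(tN)$ with $N$ the nilpotent shift), a Jordan block of size $2$ plus a $1\times 1$ block, or $M$ already diagonal with a repeated eigenvalue. (3) For each Jordan type, compute the orbit closures in $\PP^2$ explicitly: for $\mathrm{diag}(a,b,0)$ the invariant curves are the level sets $X^p Y^q = c Z^{p+q}$ (with $(p,q)$ determined by $(a,b)$) together with coordinate lines, giving case (1); for the regular nilpotent, writing $\exp(tN)$ acting on $[X:Y:Z]$, the invariant curves are the orbit closures, which are exactly the conics $XZ - Y^2 = cZ^2$ plus the line $Z = 0$, giving case (2). (4) Rule out the remaining Jordan types (size-$2$ block plus a $1\times 1$ block, and diagonalizable-with-repeated-eigenvalue) by showing that either they produce only unions of concurrent lines — hence are degenerate sub-cases already subsumed, or more precisely by noting that such $M$ is conjugate into one of the two listed families up to the adjustments allowed, or that the only reduced invariant curves are pencils of lines through a fixed point, which fall under \Cref{O1vectorfield}'s conclusion and can be realized inside case (1) with $a = b$; handle the case $\lambda \neq 0$ versus $\lambda = 0$ (whether $v$ generates a genuine one-parameter group of automorphisms of $C$ or only of $\PP^2$) uniformly since $v(F) = \lambda F$ still forces invariance of the zero locus.

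The main obstacle I expect is step (2)–(3): being careful that we capture \emph{all} conjugacy classes of one-parameter subgroups of $\PGL_3$ and correctly match each to its family of invariant reduced curves, in particular (a) the arithmetic argument that a $\mathbb{G}_m$-action yields algebraic orbit closures only when the eigenvalue ratio is rational, so that after clearing denominators we land on integer exponents $(a,b)$, and $X^pY^q = cZ^{p+q}$ is indeed the general invariant hypersurface; and (b) verifying that the regular nilpotent's orbit closures are precisely the stated quadritangent conics and no other curves (e.g. checking the action fixes $[0:0:1]$, the invariant line $Z=0$, and that the ring of invariant rational functions is generated appropriately so that $\{XZ - Y^2 + cZ^2\}$ exhausts the one-dimensional orbit closures). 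These are concrete linear-algebra and invariant-theory computations, but they are where the actual content lies; everything else is bookkeeping built on \Cref{logtangent}, \Cref{O1vectorfield}, and the identification $H^0(\PP^2, T_{\PP^2}) = \mathfrak{pgl}_3$.
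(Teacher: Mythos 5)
Your proposal is correct and follows essentially the same route as the paper: lift the section to a linear vector field (a matrix, up to the Euler field), reduce to Jordan canonical form, and identify the invariant curves case by case, with the diagonal case giving $X^pY^q=cZ^{p+q}$ plus coordinate lines, the regular nilpotent giving the quadritangent conics, and the degenerate Jordan types giving configurations of lines subsumed in case (1). The computations you defer are exactly what the paper carries out, by solving the linear ODE explicitly and using linear independence of the functions $t^m e^{\omega t}$ together with a lattice-point argument to pin down the defining polynomials (which in particular handles irrational eigenvalue ratios, where the only invariant curves are coordinate lines).
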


\begin{proof}
Let $s$ be a section of $T_{\PP^2}(-\log C)$.  Then, $s$ is also a section of $T_{\PP^2}$ and can be written as $L_X\frac{\partial}{\partial_X}+L_Y\frac{\partial}{\partial_Y}+L_Z\frac{\partial}{\partial_Z}$ where $L_X, L_Y, L_Z$ are homogenous linear forms in $X$, $Y$ and $Z$. 

Let $C_0$ be a component of $C$ and let $p\in C$ be a smooth point of $C_0$.  We lift $p\in \mathbb{P}^2$ to a point $\widetilde{p}\in \mathbb{C}^3 \setminus \{0\}$. 
Then, $C_0$ contains the projection under $\mathbb{C}^3\setminus\{0\}\to \mathbb{P}^2$ of the integral curve $\widetilde{C}$ through $\widetilde{p}$ which is the solution to the matrix differential equation
\begin{align}
\label{21b}
    \frac{d}{dt} \begin{pmatrix} X(t) \\ Y(t) \\ Z(t)\end{pmatrix} &= A \begin{pmatrix} X(t) \\ Y(t) \\ Z(t)\end{pmatrix},\quad \begin{pmatrix} X(0) \\ Y(0) \\ Z(0)\end{pmatrix} =\widetilde{p}
\end{align}
 where $A$ is the 3 by 3 matrix with complex entries such that 
\begin{align*}
    A\begin{pmatrix} X \\ Y \\ Z\end{pmatrix} &=\begin{pmatrix} L_X(X,Y,Z) \\L_Y(X,Y,Z)\\ L_Z(X,Y,Z)\end{pmatrix}.
\end{align*}
 If the projection of $\widetilde{C}$ to $\mathbb{P}^2$ is not a single point, then the image is dense in $C_0$. Therefore, $C$ must be (the closure of) a finite union of projections of integral curves in $\mathbb{C}^3 \setminus \{0\}$ and 1-dimensional components of the zero locus of $s$. 

After a linear change of coordinates, we can assume that $A$ is in Jordan block form. We keep this choice of coordinates from now on. We let $\widetilde{p}=(c_1,c_2,c_3) \in \mathbb{C}^{3}$ denote a lift of a point on $C_{0}$ (to be determined separately in each case)  and we let $P(X,Y,Z)$ be a homogenous polynomial defining $C_0$.

\textbf{Case 1: $A$ is diagonal.} In this case we will show that the first case of \eqref{21b} happens, so we can assume that $c_1,c_2,c_3\neq 0$ or else $\widetilde{p}$ is contained in a coordinate line. Let $\lambda_1,\lambda_2,\lambda_3$ be the eigenvalues of $A$. Then, the solution to \eqref{21b} is $\begin{pmatrix} X(t) \\ Y(t) \\ Z(t)\end{pmatrix}=\begin{pmatrix} e^{\lambda_1 t} c_1 \\ e^{\lambda_2 t} c_2 \\ e^{\lambda_3 t} c_3\end{pmatrix}$.

The defining equation $P(X,Y,Z)$ of $C_{0}$ is a homogenous polynomial of minimial degree satisfying
\begin{align}
    P(c_1e^{\lambda_1 t},c_2e^{\lambda_2 t},c_3e^{\lambda_3 t})=0.
\end{align} 
We can choose a new grading on  $\mathbb{C}[X,Y,Z]$ by the complex numbers $\mathbb{C}$ where the monomial $X^a Y^b Z^c$ has the grade $a\lambda_1+b\lambda_2+c\lambda_3$. Let $P_\omega$ be the homogenous component of $P$ with grade $\omega\in \mathbb{C}$. By linear independence of characters, the elements in $\{e^{\omega t} \mid \omega \in \mathbb{C}\}$ are linearly independent, and hence $P_{\omega}(c_1e^{\lambda_1 t},c_2e^{\lambda_2 t},c_3e^{\lambda_3 t})=0$. Therefore, $P$ divides $P_{\omega}$ for all $\omega\in \mathbb{C}$ so $P_{\omega}$ can be nonzero for only one value of $\omega$. 

We cannot have $\lambda_1,\lambda_2,\lambda_3$ all equal or else $s$ would be a multiple of $X\frac{\partial}{\partial_X}+Y\frac{\partial}{\partial_Y}+Z\frac{\partial}{\partial_Z}$ which induces the zero vector field on $\PP^2$. The monomials $X^{a}Y^{b}Z^{c}$ that can appear in $P$ with nonzero coefficients must be the solution to the two linear equations
\begin{align}
\label{twoeq}
    a+b+c&=\deg(C_0)\\
    \lambda_1a + \lambda_2 b+\lambda_3 c &= \omega
\end{align}
for some fixed $\omega$. The solution set to \eqref{twoeq} is some 1-dimensional complex line $\ell$ in $\mathbb{C}^3$ and we are are interested in the integer solutions $\ell\cap \mathbb{Z}^3$. If $\ell\cap \mathbb{Z}^3$ is empty or a single point, then $P$ is a monomial, hence degree 1 by irreducibility. So the only remaining case is if $\ell\cap \mathbb{Z}^3$ is a 1-dimensional lattice, which can be written in the form $\{(a_0,b_0,c_0)+m(a_1,b_1,c_1)\mid m\in \mathbb{Z}\}$. 

Thus, we know that the monomials $X^{a}Y^b Z^c$ that can appear with nonnegative coefficients in $P$ must be in $S=\{(a_0,b_0,c_0)+m(a_1,b_1,c_1)\mid m\in \mathbb{Z}\}\cap \mathbb{Z}_{\geq 0}^3$. If $S$ contains exactly one element, then $P$ is degree one by irreducibility. If $S$ contains exactly two elements, then $P$ is a binomial and must then be of the form $X^aY^b+kZ^{a+b}$ for some $k\neq 0$, because $P$ is irreducible. Finally, one can check $S$ cannot contain three or more elements assuming $P$ is irreducible. 

\textbf{Case 2: $A$ has exactly two Jordan blocks} Let $\lambda_1$ be the eigenvalue of the $2\times 2$ block and $\lambda_2$ be the eigenvalue of the $1\times 1$ block. In this case we will show that the first case of \Cref{integralcurve} happens. We can assume $C_0$ is not contained in a coordinate line, and therefore assume $\widetilde{p}$ is such that $c_2,c_3\neq 0$.  Then, a solution to \eqref{21b} is $\begin{pmatrix} X(t) \\ Y(t) \\ Z(t)\end{pmatrix}=\begin{pmatrix} e^{\lambda_1 t} c_1 + c_2 te^{\lambda_1 t} \\ e^{\lambda_1 t} c_2 \\ e^{\lambda_2 t} c_3\end{pmatrix}$ for $\widetilde{p}=(c_1,c_2,c_3)$.

This means $P(e^{\lambda_1 t} c_1 + c_2 te^{\lambda_1 t} , e^{\lambda_1 t} c_2 , e^{\lambda_2 t} c_3)=0$. Dividing by $e^{\deg(P)\lambda_1t}$ and letting $\lambda = \lambda_2-\lambda_1$, we find
\begin{align*}
    P(c_1+c_2 t, c_2, e^{\lambda t} c_3)=0.
\end{align*}
Reparameterizing $t$ by $t-\frac{c_1}{c_2}$, we can assume $c_1=0$. We claim now that the map $\mathbb{C}[X,Y,Z]\to \mathbb{C}[[t]]$ sending $P(X,Y,Z)$ to $P(c_2 t, c_2, e^{\lambda t} c_3)$ is an injection because $c_2 t, c_2, e^{\lambda t} c_3$ are algebraically independent. The latter claim follows from the fact that the  functions $\{t^me^{\omega t}\mid m\in \mathbb{Z}_{\geq 0}, \omega \in \mathbb{C}\}$ are linearly independent. Therefore, $P=0$, i.e. $C_0$ must be contained in either the $\{Y=0\}$ or $\{Z=0\}$ coordinate lines, establishing this case.  

\textbf{Case 3: $A$ has exactly one Jordan block}
Let $\lambda$ be the unique eigenvalue of $A$. Subtracting a diagonal matrix from $A$ is equivalent to subtracting the Euler vector field $X\frac{\partial}{\partial_X}+Y\frac{\partial}{\partial_Y}+Z\frac{\partial}{\partial_Z}$ from the vector field $s$, so we can assume $\lambda=0$. Then, a solution to \eqref{21b} is $\begin{pmatrix} X(t) \\ Y(t) \\ Z(t)\end{pmatrix}=\begin{pmatrix}  c_1 + c_2 t + c_3 \frac{1}{2}t^2 \\  c_2+c_3t \\c_3\end{pmatrix}$ for $\widetilde{p}=(c_1,c_2,c_3)$. In this case we will show that the second case of \Cref{integralcurve} happens, so we can assume that $c_3\neq 0$ or else $C_{0}$ is contained in $\{Z=0\}$.

We know that $P( c_1 + c_2 t + c_3 \frac{1}{2}t^2 , c_2+c_3t , c_3)=0$. Now we change coordinates on $t$. Letting $t\mapsto t-\frac{c_2}{c_3}$ yields $P( c_1 +\frac{1}{2}\frac{c_2^2}{c_3} + c_3 \frac{1}{2}t^2, c_3t, c_3)=0$. Dividing out by a power of $c_3$ and replacing $c_1$ with another constant $c_1'$, we find $P( c_1'+ \frac{1}{2}t^2, t, 1)=0$. 

As $t$ varies, the curve $(c_1'+ \frac{1}{2}t^2, t, 1)$ parameterizes the conic $XZ-\frac{1}{2}Y^2-c_1'Z^2$ in $\PP^2$, settling this case. 
\end{proof}

\begin{proof}[Proofs of \Cref{thmplanecurves} and \Cref{classification}]
We will prove \Cref{classification} which implies \Cref{thmplanecurves}. Let $f: \mathbb{P}^1\to \mathbb{P}^2$ be a general map of degree $e$. The log tangent sheaf $T_{\PP^2}(-\log C)$ is a vector bundle since it is a reflexive sheaf on a surface. Pulling back $T_{\PP^2}(-\log C)$ to $\PP^1$ yields a rank 2 vector bundle $E$ of degree $(3-d)e$. We split our analysis into cases.

\textbf{Case: $d\geq 5$ or $d=4$ and $e\geq 2$.} If $\Phi$ is not of maximal rank, then $E\cong \OO(a)\oplus \OO(b)$ where $a\geq 0$. Since the total degree of $E$ is at most $-2$, we get $a-b\geq 2$ and we can apply \Cref{thm-GrauertMulich} to find a line subbundle of $T_{\PP^2}(-\log C)$ of non-negative degree. This means $T_{\PP^2}(-\log C)$ has a section and we conclude by \Cref{integralcurve}.

\textbf{Case: $d=4$ and $e=1$.} If $\Phi$ is not of maximal rank, then $h^0(E)\geq 2$ in this case. This means $E\cong \OO(a)\oplus \OO(b)$ where $a\geq 1$. In this case $a-b\geq 3$, so we can apply \Cref{thm-GrauertMulich} to find a line subbundle $\OO(a)$ of  $T_{\PP^2}(-\log C)$. Applying \Cref{O1vectorfield}, we are done in this case.

\textbf{Case: $d=3$ and $e\geq 2$.} In this case, $\deg(E)=0$ and a dimension count shows that $\Phi$ is not of maximal rank whenever $h^0(E)\geq 3$. Hence, we can apply \Cref{thm-GrauertMulich} to find a line subbundle of $T_{\PP^2}(-\log C)$ of positive degree, so we again conclude using \Cref{O1vectorfield}. 

\textbf{Case: $d=3$ and $e=1$.} We can find a line $\ell$ meeting $C$ in three distinct points. This means $\Phi$ is automatically surjective, so it is of maximal rank. 

\textbf{Case: $d=2$.} In this case, $\deg(E)=e$ and $\Phi$ is not of maximal rank iff $E\cong O(a)\oplus O(b)$ where $a\geq e+2$ and $b\leq -2$. Applying \Cref{thm-GrauertMulich}, we find a line subbundle of $T_{\PP^2}(-\log C)$ of degree at least $\lceil\frac{e+2}{e}\rceil=2$. However, there are no nontrivial maps $\OO(2)\to T_{\PP^2}$, showing $\Phi$ must have maximal rank. 
\end{proof}

\section{Hyperplane sections}

We let $X$ be a smooth degree $d$ hypersurface in $\PP^n$. 
Using the notation from \cref{subsection:maptomoduli}, our objective is to prove that $\Phi$ has maximal rank when $k = n-1$ and $e=1$.  Unlike the plane curve case, we are unable to obtain a complete  classification statement like \Cref{classification}. However, we are able to prove that if $d$ is larger than $n+1$, the hyperplane sections of $X$ vary maximally in moduli. In this section, we prove \Cref{thm:hypsliceintro2} and some generalizations, captured below in \Cref{thm:knboundslice} and \Cref{thm:smallkslice}.

Our results all rely on a stability result from Guenancia \cite{guenancia}. The following version comes from Guenancia's Theorem A by observing that the canonical bundle of a degree $d$ hypersurface in $\PP^n$ is ample when $d \geq n+2$.

\begin{theorem}[{Thm A from \cite{guenancia}}]
\label{theorem:Guenancia}
If $X$ is a smooth hypersurface of degree $d \geq n+2$, then $T_{\PP^n}(-\log X)$ is semistable.
\end{theorem}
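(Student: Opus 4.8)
The plan is to obtain \Cref{theorem:Guenancia} as a direct specialization of Guenancia's Theorem A in \cite{guenancia}. That theorem says that for a projective log-canonical pair $(Z,\Delta)$ with log-canonical class $K_Z+\Delta$ ample (the form in which we need it), the log tangent sheaf $T_Z(-\log\Delta)$ is slope-semistable with respect to the polarization $K_Z+\Delta$. So the entire argument reduces to checking that the pair $(\PP^n,X)$ satisfies these hypotheses and that Guenancia's conclusion is exactly the notion of semistability used in this paper.

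First I would observe that $(\PP^n,X)$ is log canonical: since $\PP^n$ and $X$ are both smooth, $X$ is a smooth (hence simple normal crossings) divisor, so the pair is log smooth and in particular log canonical; moreover in this smooth setting the log tangent sheaf appearing in Guenancia's statement is exactly the locally free sheaf $T_{\PP^n}(-\log X)$ considered here. Next I would check the positivity hypothesis: since $K_{\PP^n}=\OO_{\PP^n}(-n-1)$ and $\OO_{\PP^n}(X)=\OO_{\PP^n}(d)$, the log-canonical class is
\[
  K_{\PP^n}+X=\OO_{\PP^n}(d-n-1)
\]
(equivalently, by the adjunction formula, $K_X=\OO_X(d-n-1)$), which is ample precisely when $d-n-1>0$, i.e. when $d\geq n+2$. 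Guenancia's theorem then gives that $T_{\PP^n}(-\log X)$ is semistable with respect to $K_{\PP^n}+X$.

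Finally I would reconcile the polarizations: on $\PP^n$ the class $K_{\PP^n}+X$ equals $(d-n-1)\,\OO_{\PP^n}(1)$, a positive integral multiple of the hyperplane class, and slope-(semi)stability of a torsion-free sheaf is unchanged when the polarizing class is rescaled by a positive rational number. Hence semistability with respect to $K_{\PP^n}+X$ coincides with semistability with respect to $\OO_{\PP^n}(1)$, which is the convention fixed earlier in the paper. This completes the deduction, and — since the same three checks go through whenever $(\PP^n,X)$ is merely a log-canonical pair with $d\geq n+2$ — it also explains why the later results hold at that level of generality.

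I do not expect any genuine obstacle: all of the substantive content is contained in Guenancia's theorem, and what remains is only the bookkeeping of hypotheses and conventions above. The single point deserving care is the last one, verifying that Guenancia's $(K_Z+\Delta)$-semistability agrees on $\PP^n$ with the $\OO(1)$-semistability used throughout this paper, and that is immediate once one notes the two polarizing classes are proportional.
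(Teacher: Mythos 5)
Your proposal is correct and matches the paper's treatment: the paper likewise obtains this statement directly from Guenancia's Theorem A, the only input being the observation (equivalent via adjunction to your computation of $K_{\PP^n}+X$) that the relevant log-canonical class is ample exactly when $d \geq n+2$. Your extra remarks on log canonicity of the smooth pair and on rescaling the polarization on $\PP^n$ are exactly the routine hypothesis checks implicit in the paper's citation.
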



Using Theorem \ref{theorem:Guenancia}, the basic strategy is to understand how large the degree $d$ can be such that the restricton of $T_{\PP^n}(-\log X)$ to the curve or $k$-plane can have a section. We use results from Section \ref{sec:Grauert-Mulich} to do this.

\begin{theorem}
\label{theorem:degEcurvesLargen}
If $X$ in $\mathbb{P}^n$ is a smooth hypersurface of degree $d$, then the space of degree $e$ rational curve sections of $X$ vary maximally in modulus when $d > \frac{n(n-1)}{2e} + n + 1$.
\end{theorem}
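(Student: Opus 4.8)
The plan is to combine the Grauert–Mulich machinery of \Cref{sec:Grauert-Mulich} with Guenancia's semistability theorem (\Cref{theorem:Guenancia}), exactly in the spirit of the plane curve case, but now tracking the numerics carefully for degree $e$ rational curves in $\PP^n$. Fix a general map $f\colon \PP^1 \to \PP^n$ of degree $e$, and set $E = f^* T_{\PP^n}(-\log X)$, a rank $n$ vector bundle on $\PP^1$ of degree $\deg(f^*T_{\PP^n}(-\log X))$. The first step is to compute this degree: from the defining sequence $0 \to T_{\PP^n}(-\log X) \to T_{\PP^n} \to \cO_X(X) \to 0$ (here $X$ is smooth, so the sequence is short exact and $T_{\PP^n}(-\log X)$ is a vector bundle), we get $\deg(T_{\PP^n}(-\log X)) = (n+1) - d$ in terms of $\cO_{\PP^n}(1)$, hence $\deg E = e(n+1-d)$ and $\mu(E) = \tfrac{e(n+1-d)}{n}$.

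Next I want to say that $\Phi$ fails to have maximal rank only if $E$ has a ``large'' summand, i.e. if in the splitting $E = \bigoplus_{i=1}^n \cO(a_i)$ with $a_1 \geq \cdots \geq a_n$ one has $a_1 \geq 1$ (this is where the condition that the general rational curve section of $X$ has no infinitesimal automorphisms, as discussed in \Cref{subsection:maptomoduli}, gets used: $h^0(E)$ controls the fiber dimension of $\Phi$, and a positive summand forces $h^0 \geq 1$, i.e. a nontrivial deformation preserving the section). Now I apply \Cref{thm-GrauertMulich}: since $T_{\PP^n}(-\log X)$ is semistable by \Cref{theorem:Guenancia} (valid as $d \geq n+2$, which follows from the hypothesis $d > \tfrac{n(n-1)}{2e} + n + 1 \geq n+1$), and $f^*T_{\PP^n}$ is globally generated for general $f$, the splitting type satisfies $|a_i - a_{i+1}| \leq 1$; combined with the degree computation this bounds $a_1$ from above. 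Concretely, $a_1 \leq \tfrac{1}{n}\bigl(\deg E + \binom{n}{2}\bigr) = \tfrac{1}{n}\bigl(e(n+1-d) + \tfrac{n(n-1)}{2}\bigr)$, since the most unbalanced allowed splitting with fixed degree puts the $a_i$ in an arithmetic-progression-like pattern with consecutive gaps $\le 1$. The hypothesis $d > \tfrac{n(n-1)}{2e} + n + 1$ is precisely equivalent to $e(n+1-d) + \tfrac{n(n-1)}{2} < 0$, forcing $a_1 < 0$, hence $a_1 \leq -1 < 1$, contradicting the assumption that $\Phi$ fails maximal rank. Therefore $\Phi$ has maximal rank.

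The main obstacle I anticipate is getting the threshold for $a_1$ exactly right and cleanly justifying it: given $\deg E$ fixed and all consecutive gaps at most $1$, one must argue that $a_1 \le \lfloor (\deg E + \binom n2)/n\rfloor$ or an equivalent bound, and then check this is $<1$ under the stated inequality — essentially the observation that $\sum_i (a_1 - a_i) \le \sum_{i=1}^{n}(i-1) = \binom n2$, so $n a_1 - \deg E \le \binom n2$. A secondary point requiring care is the precise bookkeeping relating ``$\Phi$ not of maximal rank'' to ``$h^0(E) \geq 1$'': when $\Phi$ is expected to be dominant onto the moduli space one needs the generic fiber dimension to exceed $\dim \Mor_e(\PP^1,\PP^n) - \dim \PP H^0(\OO_{\PP^1}(ed))$, and when it is expected generically finite one needs $h^0(E)$ strictly positive after subtracting automorphisms; in both regimes a positive summand $a_1 \ge 1$ is what produces the excess, and it suffices to rule that out. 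I would also remark, as the authors do elsewhere, that smoothness of $X$ is only used through \Cref{theorem:Guenancia}, so the statement extends to log canonical pairs $(\PP^n, X)$ with $d \ge n+2$.
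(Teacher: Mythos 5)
Your proposal is correct and follows essentially the same route as the paper: Guenancia's semistability of $T_{\PP^n}(-\log X)$ (\Cref{theorem:Guenancia}), the generalized Grauert--Mulich statement \Cref{thm-GrauertMulich} forcing consecutive gaps at most $1$ in the splitting type of $f^{*}T_{\PP^n}(-\log X)$, the degree count $\deg f^{*}T_{\PP^n}(-\log X)=e(n+1-d)<-\tfrac{n(n-1)}{2}$, and \Cref{logtangent} identifying $H^{0}(f^{*}T_{\PP^n}(-\log X))$ with the kernel of the derivative of $\Phi$. One small correction to your bookkeeping: under the stated hypothesis the target $\PP H^{0}(\OO_{\PP^1}(ed))$ is at least as large as $\Mor_e(\PP^1,\PP^n)$, so maximal rank already fails once $a_1\geq 0$ (i.e.\ $h^{0}\geq 1$), not only when $a_1\geq 1$ --- the no-infinitesimal-automorphisms condition only mediates between $\Phi$ and the quotient moduli map and does not shift this threshold --- but since your inequality gives $a_1\leq \frac{1}{n}\bigl(e(n+1-d)+\binom{n}{2}\bigr)<0$, it rules out $a_1\geq 0$ as well and the argument goes through unchanged.
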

\begin{proof}
Consider the bundle $T_{\PP^n}(-\log X)$. By Theorem \ref{theorem:Guenancia}, this bundle is semi-stable. For $d$ larger than $n+1$, we see that a section of this bundle would give a destabilizing subsheaf, so we know that $T_{\PP^n}(-\log X)$ has no sections.

Let $\cM=\operatorname{Mor}_e(\PP^1,\PP^n)$ be the space of parameterized degree $e$ rational curves in $\PP^n$. Given a choice of $F$ with $X=V(F)$, there is a natural map $\Phi: \cM \to H^0(\OO_{\PP^1}(ed))$ sending a map $f:\PP^1\to \PP^n$ to the pullback $f^{*}F\in H^0(\PP^1,\OO_{\PP^1}(de))$. We know by Lemma \ref{logtangent} that the tangent space to the fiber of $\Phi$ at a given map $f:\PP^1\to \PP^n$ is simply $H^0(f^{*}T_{\PP^n}(-\log X))$. To show that $\phi$ is generically finite, we need only show that $h^0(f^{*}T_{\PP^n}(-\log X)) = 0$.

By Theorem \ref{thm-GrauertMulich}, we see that $f^{*}T_{\PP^n}(-\log X)$ is a direct sum of line bundles $\bigoplus \OO(a_i)$ with consecutive $a_i$ differing by at most 1. Thus, any such bundle on $\PP^1$ that has a section will have degree larger than that of the bundle $\OO \oplus \OO(-1) \oplus \cdots \oplus \OO(-n+1)$. From this it follows that any semi-stable bundle $E$ on $\PP^n$ such that $f^{*}E$ has a section for a general map $f:\PP^1\to \PP^n$ will have degree at least $-\frac{n(n-1)}{2}$. Thus, if we want $h^0(f^{*}T_{\PP^n}(-\log X)) = 0$, we must have that $\deg f^{*} T_{\PP^n}(-\log X) \geq - \frac{n(n-1)}{2}$. Since $\deg f^{*}T_{\PP^n}(-\log X) = e(n+1-d)$, the result follows.
\end{proof}

We now consider $k$-plane sections of smooth hypersurfaces. By Lemma \ref{lem:slopesOfRestrictionsTokPlanes}, we need to understand torsion free sheaves on $\PP^k$ whose Harder-Narasimhan filtration has sub-quotients whose slopes do not decrease too quickly, namely $\mu_1>\mu_2>\cdots\mu_a$ with $\mu_{i}-\mu_{i+1}\leq \frac{1}{k}$ for all $i$. Understanding the possible slopes that may appear in the Harder-Narasimhan filtration is a combinatorially interesting problem, which we describe below.

\begin{definition}
Let a sequence $(d_1,r_1),(d_2,r_2),\ldots, (d_a,r_a)$ in $\mathbb{Z}_{\geq 0}\times \mathbb{Z}_{>0}$ be \emph{$k$-admissible} if $d_1\leq 0$ and $0\leq \frac{d_{i+1}}{r_{i+1}}-\frac{d_i}{r_i}\leq \frac{1}{k}$ for each $i$. Let $A_{k,n}$ denote the set of $k$-admissible sequences with $\sum_{i} r_i=n$ (where $a$ is arbitrary).
\end{definition}
\begin{definition}
 Define $B_k(n)$ to be $\max\{\sum_{i=1}^{a}{d_i}\mid (d_1,r_1),\ldots(d_a,r_a)\text{ in }A_{k,n}\}$.
\end{definition}

\begin{lemma}
\label{lem:PhikrelationtoDegree}
If $E$ is a semistable sheaf on $\PP^n$ of rank $n$ such that its restriction to a general $k$-plane has a section, then $\deg E \geq -B_k(n)$.
\end{lemma}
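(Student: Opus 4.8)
The plan is to combine \Cref{lem:slopesOfRestrictionsTokPlanes} with the definition of $k$-admissibility so that the Harder-Narasimhan filtration of the restriction $E|_\Lambda$ is forced to be ``gap-controlled,'' and then to read off the degree bound from the combinatorial quantity $B_k(n)$. Concretely, let $\Lambda$ be a general $k$-plane. Since $E$ is semistable, \Cref{lem:slopesOfRestrictionsTokPlanes} says that for \emph{every} sheaf $S \subsetneq E|_\Lambda$ appearing in the Harder-Narasimhan filtration of $E|_\Lambda$, we must have $\mu^{\min}(S) - \frac1k \le \mu^{\max}(E|_\Lambda / S)$. Writing the HN filtration of $E|_\Lambda$ as $0 = F_0 \subsetneq F_1 \subsetneq \cdots \subsetneq F_a = E|_\Lambda$ with semistable quotients $G_i = F_i/F_{i-1}$ of slope $\mu_i$ and rank $r_i$, strictly decreasing $\mu_1 > \mu_2 > \cdots > \mu_a$, I would apply the inequality to each $S = F_i$: then $\mu^{\min}(F_i) = \mu_i$ and $\mu^{\max}(E|_\Lambda/F_i) = \mu_{i+1}$, so the hypothesis of the non-semistability criterion failing (which it must, since $E$ \emph{is} semistable) gives exactly $\mu_i - \mu_{i+1} \le \frac1k$ for all $i$.

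Next I would record the remaining numerical constraints. First, $\deg(E|_\Lambda) = \deg(E)$ because $\Lambda$ is a general $k$-plane (restricting to a general linear subspace preserves $c_1 \cdot \OO(1)^{\dim - 1}$; this is how $\deg$ of a sheaf on $\PP^n$ is defined in the Preliminaries), and the rank is $n$ by hypothesis. Hence $\sum_i r_i = n$ and $\sum_i d_i = \deg(E)$ where I set $d_i = \deg(G_i)$, so $\mu_i = d_i/r_i$. Second, I need $\mu_1 \le 0$, i.e. $d_1 \le 0$: this follows from the hypothesis that $E|_\Lambda$ has a nonzero section. Indeed a nonzero global section gives a nonzero map $\OO_\Lambda \to E|_\Lambda$; composing with the projection $E|_\Lambda \twoheadrightarrow G_1 \oplus \cdots$ down to the first nonzero quotient, $\OO_\Lambda$ maps nontrivially to some semistable $G_j$, and since a nonzero map from $\OO_\Lambda$ to a torsion-free sheaf forces $\mu^{\max}$ of the target $\ge 0$... more carefully, $\OO_\Lambda$ is semistable of slope $0$, so $\Hom(\OO_\Lambda, G_i) = 0$ once $\mu_i < 0$, forcing the section to land in $F_j$ with $\mu_j \ge 0$; but $\mu_1 \ge \mu_j \ge 0$, giving $d_1 \ge 0$, hence $d_1 = 0$ after... wait, I need $d_1 \le 0$, not $d_1 \ge 0$. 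Let me restate: the admissibility definition wants $d_1 \le 0$, but a section forces $\mu_1 \ge 0$, i.e. $d_1 \ge 0$. So to fit the definition I would instead use the \emph{dual} or simply observe the bound is on $-B_k(n) \le \deg E$, i.e. I want to show $-\deg E \le B_k(n)$; applying the whole analysis to... the cleanest fix: the sequence $(d_1, r_1), \ldots, (d_a, r_a)$ obtained has $d_1 \ge 0$ and increasing-then-controlled; but $B_k(n)$ is defined via sequences with $d_1 \le 0$. The resolution is that the relevant sequence to feed into $A_{k,n}$ is $(-d_a, r_a), (-d_{a-1}, r_{a-1}), \ldots, (-d_1, r_1)$ reversed, whose partial slopes are $-\mu_a > -\mu_{a-1} > \cdots$, increasing with gaps $\le \frac1k$, and whose first slope $-\mu_a \le -\mu_1 \le 0$. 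This reversed sequence lies in $A_{k,n}$, so $\sum_i (-d_i) \le B_k(n)$, i.e. $\deg E = \sum_i d_i \ge -B_k(n)$, which is exactly the claim.

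So the key steps, in order, are: (1) fix a general $\Lambda$, take the HN filtration of $E|_\Lambda$, and show via \Cref{lem:slopesOfRestrictionsTokPlanes} (applied with $S = F_i$ for each $i$, using semistability of $E$ to negate the criterion) that consecutive slope gaps are $\le \frac1k$; (2) use $\deg(E|_\Lambda) = \deg E$ and $\rk(E) = n$ to get $\sum d_i = \deg E$, $\sum r_i = n$; (3) use the existence of a section of $E|_\Lambda$ to control $\mu_1 \ge 0$; (4) reverse-and-negate the sequence $(d_i, r_i)$ to land in $A_{k,n}$ and conclude $\deg E \ge -B_k(n)$ from the definition of $B_k(n)$.

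The main obstacle — and the step I would be most careful about — is step (3), nailing down the sign conventions so that the section hypothesis feeds correctly into the $k$-admissibility definition. The subtlety is purely bookkeeping (a section forces the top slope nonnegative, and admissible sequences are normalized with $d_1 \le 0$, so a reversal-and-sign-flip is needed), but getting it backwards would flip the inequality. A secondary point requiring a word of justification is that restricting to a \emph{general} $\Lambda$ both (a) preserves the degree and (b) is permissible in \Cref{lem:slopesOfRestrictionsTokPlanes}, which already has ``general $k$-plane'' in its hypotheses, so there is no conflict. Everything else is a direct unwinding of definitions.
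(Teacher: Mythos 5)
Your overall strategy is the same as the paper's: restrict to a general $\Lambda$, apply \Cref{lem:slopesOfRestrictionsTokPlanes} to each step $F_i$ of the Harder--Narasimhan filtration (using semistability of $E$ to negate the criterion and get $\mu_i-\mu_{i+1}\le \tfrac1k$), use the section to force $\mu_1\ge 0$, note $\deg(E|_\Lambda)=\deg E$ and $\sum r_i=n$, and then package the numerical data into a $k$-admissible sequence to invoke the definition of $B_k(n)$. Steps (1)--(3) are correct and are exactly what the paper does.

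However, the final bookkeeping step (4), which you yourself flagged as the delicate point, is wrong as written. With your convention $d_i=\deg G_i$, the sequence you propose to feed into $A_{k,n}$ is the \emph{reversed} negated sequence $(-d_a,r_a),\ldots,(-d_1,r_1)$. Its slopes, read in order, are $-\mu_a,-\mu_{a-1},\ldots,-\mu_1$, and since $\mu_1>\cdots>\mu_a$ these are strictly \emph{decreasing}, violating the admissibility requirement $0\le \frac{d_{i+1}}{r_{i+1}}-\frac{d_i}{r_i}\le\frac1k$; moreover your claimed inequality $-\mu_a\le-\mu_1\le 0$ is backwards ($\mu_a\le\mu_1$ gives $-\mu_a\ge-\mu_1$, and $-\mu_a$ is typically positive, e.g.\ whenever $\deg E<0$), so the first-entry condition fails as well. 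The correct conversion --- and what the paper's proof does --- is to negate \emph{without} reversing: the sequence $(-d_1,r_1),(-d_2,r_2),\ldots,(-d_a,r_a)$ has slopes $-\mu_1<-\mu_2<\cdots<-\mu_a$, increasing with gaps at most $\tfrac1k$ by your step (1), and its first entry satisfies $-d_1\le 0$ precisely because the section gives $\mu_1\ge 0$ by your step (3). Since reversal was irrelevant to the sum anyway, $\sum_i(-d_i)=-\deg E\le B_k(n)$ and the lemma follows. So the error is local and immediately repairable, but as stated the sequence you construct is not $k$-admissible and the argument does not close.
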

\begin{proof}
Let $\Lambda$ be a general $k$-plane and $0 = E_0 \subset E_1 \subset \cdots \subset E_a = E|_{\Lambda}$ be the Harder-Narasimhan filtration of $E|_{\Lambda}$. Let $-d_i$ be the degree of $E_i/E_{i-1}$ and $r_i$ be the rank of $E_i/E_{i-1}$. Since $E|_{\Lambda}$ has a section, we see that $d_1 \leq 0$. Since $E$ is semistable, by Lemma \ref{lem:slopesOfRestrictionsTokPlanes} it follows that the sequence $(-d_1,r_1), \cdots, (-d_a,r_a)$ will be $k$-admissible. The result follows.
\end{proof}
We can compute a bound for when $k$-plane sections of a degree $d$ hypersurface in $\PP^n$ will vary maximally in moduli in terms of $B_k(n)$. 

\begin{theorem}
\label{thm:Bbound}
Let $X$ be a smooth, degree $d$ hypersurface in $\PP^n$ with $d>B_k(n)+n+1$. Then,
\begin{align*}
    \Phi: \operatorname{Mor}_1(\PP^k,\PP^n)&\dashrightarrow \PP H^0(\PP^k,\OO_{\PP^k}(d))\\
    \iota &\mapsto [\iota^{-1}(X)].
\end{align*}
is of maximal rank.
\end{theorem}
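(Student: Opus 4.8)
The plan is to run the same argument as in the proof of \Cref{theorem:degEcurvesLargen}, but slicing by $k$-planes instead of rational curves, using \Cref{lem:slopesOfRestrictionsTokPlanes} in place of \Cref{thm-GrauertMulich}. First I would set up the map to moduli: let $\cM = \operatorname{Mor}_1(\PP^k, \PP^n)$, and for a fixed equation $F$ with $X = V(F)$, consider $\Phi: \cM \dashrightarrow \PP H^0(\PP^k, \OO_{\PP^k}(d))$ sending $\iota$ to $\iota^{*}F$. By \Cref{logtangent}, the tangent space to the fiber of $\Phi$ at $\iota$ is $H^0(\iota^{*}T_{\PP^n}(-\log X))$. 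Since the general $k$-plane slice of a smooth hypersurface of this degree has no infinitesimal automorphisms (as noted in \cref{subsection:maptomoduli}), showing $\Phi$ has maximal rank amounts to showing $h^0(\iota^{*}T_{\PP^n}(-\log X)) = 0$ for general $\iota$ in the regime where $\dim \cM \leq \dim \PP H^0(\OO_{\PP^k}(d))$, and more generally a dimension count reduces maximal rank to controlling this $h^0$.

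The core step is the following: suppose for contradiction that $\iota^{*}T_{\PP^n}(-\log X)$ has a nonzero section for general $\iota$; equivalently, the restriction of $E := T_{\PP^n}(-\log X)$ to a general $k$-plane $\Lambda$ has a section. By \Cref{theorem:Guenancia}, $E$ is semistable (here $d \geq n+2$ follows from $d > B_k(n) + n + 1 \geq n+1$, and in fact $B_k(n) \geq 0$ so $d \geq n+3 > n+2$). Then \Cref{lem:PhikrelationtoDegree}, applied to $E$ — which has rank $n$ — gives $\deg E \geq -B_k(n)$. But $\deg T_{\PP^n}(-\log X) = \deg T_{\PP^n} - d = (n+1) - d$ (the log tangent sheaf is the kernel of $T_{\PP^n} \to \OO_X(X)$ off the singular locus, so its first Chern class is $c_1(T_{\PP^n}) - d H = (n+1-d)H$). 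Hence $n + 1 - d \geq -B_k(n)$, i.e. $d \leq B_k(n) + n + 1$, contradicting the hypothesis $d > B_k(n) + n + 1$. Therefore $h^0(\iota^{*}T_{\PP^n}(-\log X)) = 0$ for general $\iota$, and $\Phi$ is generically finite onto its image.

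It remains to upgrade "generically finite" to "maximal rank" in the case $\dim \cM \geq \dim \PP H^0(\OO_{\PP^k}(d))$, i.e. to show $\Phi$ is dominant in that range. This is handled by the same vanishing: the derivative $d\Phi_\iota: T_\iota \cM \to T_{[\iota^{*}F]} \PP H^0(\OO_{\PP^k}(d))$ has kernel exactly $H^0(\iota^{*}T_{\PP^n}(-\log X))$ (after accounting for the one-dimensional space of rescalings, which is absorbed in passing to $\PP H^0$), so once this kernel vanishes, $d\Phi_\iota$ is injective, hence of maximal rank given the dimension comparison; over $\CC$ this is equivalent to $\Phi$ having maximal rank. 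I do not expect a serious obstacle here — the only subtlety is the bookkeeping of the one-dimensional scaling ambiguity between $\operatorname{Mor}_1(\PP^k,\PP^n) \subset \PP(H^0(\OO_{\PP^k}(1))^{\oplus n+1})$ and the target $\PP H^0$, which is exactly the same issue already dealt with in \Cref{logtangent} and in the plane curve arguments. The genuinely nontrivial inputs — semistability of the log tangent sheaf (\Cref{theorem:Guenancia}) and the $k$-plane Grauert–Mulich estimate (\Cref{lem:slopesOfRestrictionsTokPlanes}, hence \Cref{lem:PhikrelationtoDegree}) — have already been established, so this proof is essentially an assembly of those pieces together with the Chern class computation $\deg T_{\PP^n}(-\log X) = n+1-d$.
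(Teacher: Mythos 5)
Your proposal is correct and follows essentially the same route as the paper's proof: Guenancia's semistability of $T_{\PP^n}(-\log X)$, the identification of the fiber tangent space via \Cref{logtangent}, and \Cref{lem:PhikrelationtoDegree} to force $n+1-d \geq -B_k(n)$, contradicting $d > B_k(n)+n+1$. The extra bookkeeping you add about the dominant case is harmless (in this degree range the target always has dimension at least that of $\operatorname{Mor}_1(\PP^k,\PP^n)$, so maximal rank reduces to generic finiteness, exactly as in the paper).
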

\begin{proof}
 By Theorem \ref{theorem:Guenancia}, $T_{\PP^n}(-\log X)$ will be semi-stable. By \Cref{lem:slopesOfRestrictionsTokPlanes}, $T_{\PP^n}(-\log X)|_{\Lambda}$ will have Harder-Narasimhan filtration as described in the statement of the theorem. 
Given a hypersurface $X$ together with a choice of defining equation $f$, we get a map $\phi: \operatorname{Mor}_1(\PP^k,\PP^n) \to H^0(\OO_{\PP^k}(d))$ sending a $k$-plane to the pull-back of $f$ by the $k$-plane. We wish to show that $\phi$ is generically finite. 
 
 To get a contradiction, suppose $\phi$ has only positive-dimensional fibers. By Lemma \ref{logtangent}, the tangent space to a fiber of $\phi$ at a general point $\Lambda$ is $H^0(T_{\PP^n}(-\log X)|_{\Lambda})$, so we know that $T_{\PP^n}(-\log X)|_{\Lambda}$ has a global section. Thus, by Theorem \ref{theorem:Guenancia} and Lemma \ref{lem:PhikrelationtoDegree}, the degree of $T_{\PP^n}(-\log X)$ will be at least $- B_k(n)$. It follows that 
 $$ n+1-d \geq - B_k(n). $$
This is impossible given the assumptions in the statement of the theorem.
\end{proof}

Then, \Cref{thm:hypsliceintro2} follows from the following result on $B_k(n)$.

\begin{proposition}
\label{prop:k2n/3}
If $k \geq \frac{2n}{3}$, then $B_k(n) = 1$.
\end{proposition}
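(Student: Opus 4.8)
The plan is to show that the maximum $B_k(n)$ equals $1$ by proving both a lower and upper bound. The lower bound $B_k(n) \geq 1$ is immediate: the length-one sequence $(0,n-1),(1,1)$ — wait, we need $d_1 \le 0$, so take instead the two-term sequence $(0, r_1), (1, r_2)$; but admissibility requires $\frac{d_2}{r_2} - \frac{d_1}{r_1} = \frac{1}{r_2} \le \frac{1}{k}$, i.e. $r_2 \ge k$. So with $r_1 = n-k$ and $r_2 = k$ (possible since $k \le n$, and $n - k \ge 0$; when $n = k$ just take the single term $(1,n)$ with $d_1 = 1$ — but $d_1 \le 0$ fails, so instead $(0, n-1),(1,1)$ needs $1 \le \frac1k$, false for $k>1$; the clean choice is $(0,n-k),(1,k)$ which gives $\sum d_i = 1$). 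Hence $B_k(n)\ge 1$. The real content is the upper bound $B_k(n)\le 1$ under the hypothesis $k \ge \tfrac{2n}{3}$.

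For the upper bound, suppose $(d_1,r_1),\dots,(d_a,r_a)$ is a $k$-admissible sequence with $\sum_i r_i = n$, and let $\mu_i = d_i/r_i$ denote the (non-decreasing) slopes. The key inequalities are $\mu_1 = d_1/r_1 \le 0$ and $0 \le \mu_{i+1} - \mu_i \le \tfrac1k$ for each $i$. Summing, $\mu_i \le \tfrac{i-1}{k}$ for every $i$, and more usefully $\mu_i \le \mu_a \le \tfrac{a-1}{k}$. Since each $r_i \ge 1$ and $\sum r_i = n$, we have $a \le n$; but we need something sharper. I would argue: if $a$ is large then many $r_i$ equal $1$, and a term $(d_i, 1)$ forces $d_i = \mu_i$ to be an integer, which combined with $\mu_i \le \tfrac{i-1}{k} < 1$ for $i \le k$ forces $d_i \le 0$ for those terms. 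The heart of the matter is then to bound $\sum_i d_i = \sum_i r_i \mu_i$ using $\mu_i \le \min(\tfrac{i-1}{k}, \text{(integrality when }r_i=1))$ and $\sum r_i = n$, and to show that the constraint $k \ge \tfrac{2n}{3}$ — equivalently $n \le \tfrac{3k}{2}$, so $a \le n \le \tfrac{3k}{2}$ and hence $\mu_a \le \tfrac{a-1}{k} < \tfrac{3}{2}$ — pins the sum down to at most $1$.

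Concretely, I expect the decisive case analysis to run as follows. Write $m$ for the number of indices $i$ with $r_i \ge 2$; since $\sum r_i = n$ we get $a - m + 2m \le a + m \le n$, so $a + m \le n \le \tfrac{3k}{2}$, giving $a \le \tfrac{3k}{2} - m$. For indices with $r_i = 1$ the slope $\mu_i = d_i$ is an integer in $[0, \tfrac{a-1}{k}]$; since $a - 1 \le \tfrac{3k}{2} - 1 < 2k$, such $d_i \in \{0, 1\}$, and $d_i = 1$ is only possible once $\tfrac{i-1}{k} \ge 1$, i.e. $i \ge k+1$. For indices with $r_i \ge 2$, bound $d_i = r_i \mu_i \le r_i \cdot \tfrac{a-1}{k}$. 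Then $\sum d_i \le (\#\{i : r_i = 1, d_i = 1\}) + \sum_{r_i \ge 2} r_i\tfrac{a-1}{k} \le (a - k) + (n - (a-m))\tfrac{a-1}{k}$ in the worst case; substituting $a + m \le \tfrac{3k}{2}$ and optimizing over the free parameters, one checks this never exceeds $1$. The main obstacle I anticipate is making this last optimization airtight: the bound $\mu_i \le \tfrac{a-1}{k}$ is lossy for small $i$, and one may instead need to sum $\sum r_i\mu_i$ against the sharper per-index bound $\mu_i \le \tfrac{i-1}{k}$ together with a careful accounting of how the ranks $\ge 2$ must be "front-loaded" or "back-loaded" to maximize the total — i.e. the extremal admissible sequence is essentially forced to be $(0,\ast),\dots,(0,\ast),(1,\ast)$, and ruling out any configuration that does better requires the full strength of $k \ge \tfrac{2n}{3}$ rather than merely $k \ge \tfrac{n}{2}$ or so. A clean way to organize this is to observe that replacing a block of consecutive equal-slope terms by a single term, and "integralizing" slopes, only increases $\sum d_i$, reducing to the case $a \le 2$ where the bound $B_k(n) \le 1$ is transparent.
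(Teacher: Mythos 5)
Your lower bound is fine and matches the paper: the sequence $(0,n-k),(1,k)$ is $k$-admissible (when $k<n$, which is the relevant range) and has total degree $1$. The problem is the upper bound, which is exactly where your argument stops being a proof. The displayed estimate $\sum_i d_i \leq (a-k)+\bigl(n-(a-m)\bigr)\tfrac{a-1}{k}$ does not "never exceed $1$": take $n=\lfloor \tfrac{3k}{2}\rfloor$, $a=k+1$, with $a-1$ terms of rank $1$ and one block of rank $n-a+1$; then the right-hand side is roughly $1+\tfrac{k}{2}$. So no optimization of the free parameters can close the argument from that bound — you acknowledge this yourself ("the bound $\mu_i\leq\tfrac{a-1}{k}$ is lossy"), but the proposed repair is only a hope. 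Likewise, the final suggestion that merging consecutive equal-slope blocks and "integralizing" slopes only increases $\sum d_i$ and "reduces to the case $a\leq 2$" is unproved and does not follow: merging equal-slope terms preserves the total but does nothing to a sequence whose slopes are pairwise distinct, and it is not clear what "integralizing" means or why it is monotone. As written, the case $a\leq 2$ is never actually reached, so the heart of the statement is missing.

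The missing idea is a rank lower bound on every positive-degree term, not a slope upper bound applied term by term. After merging all slope-zero terms into the first one (this is harmless and preserves $\sum d_i$), you may assume $d_1=0$ and $d_i\geq 1$ for $i\geq 2$. Since the slopes increase by at most $\tfrac1k$ at each step, $\tfrac{d_i}{r_i}\leq\tfrac{i-1}{k}$, and $d_i\geq 1$ then forces $r_i\geq \tfrac{k}{i-1}$. If there were at least three terms, $r_1+r_2+r_3\geq 1+k+\tfrac{k}{2}>\tfrac{3k}{2}\geq n$, a contradiction; so $a\leq 2$. Finally $d_2\leq 1$, because $d_2\geq 2$ together with $\tfrac{d_2}{r_2}\leq\tfrac1k$ would give $r_2\geq 2k>n$. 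Hence $B_k(n)\leq 1$, and with your sequence $(0,n-k),(1,k)$ this yields $B_k(n)=1$. This is the paper's argument; note that the integrality-of-$d_i$ observation you lean on for rank-one terms is not needed once you have the rank bound $r_i\geq k/(i-1)$ for every term with $d_i\geq 1$.
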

\begin{proof}
Let $(d_1, r_1), \cdots, (d_a,r_a)$ be an admissible sequence of total degree $B_k(n)$. Without loss of generality, we may assume $d_1=0$, $d_i > 0$ for $i > 1$. Then it follows that $r_2 \geq k$, since $\frac{d_2}{r_2} \leq \frac{1}{k}$. Since $\frac{d_3}{r_3} \leq \frac{2}{k}$, we see that $r_3 \geq \frac{k}{2}$, provided that there are at least three terms in the sequence. However, in this case, $r_1+r_2+r_3 \geq 1+k+\frac{k}{2} = 1+\frac{3k}{2} > n$, which is impossible. Thus, $a \leq 2$.

Next, we observe that $d_2 \leq 1$, since if $d_2 \geq 2$, then $r_2 \geq d_2 k \geq 2k > n$, a contradiction. It follows that the sum of the $d_i$ is at most $1$, and since we know that 1 is achievable with the admissible sequence $(0,n-k), (1, k)$, the result follows.
\end{proof}

We defer more detailed analysis of $B_k(n)$ to \Cref{Phiknsection}. From the results in \Cref{Phiknsection} and \Cref{thm:Bbound} we get the following results.

\begin{theorem}
\label{thm:knboundslice}
If $X\subset\PP^n$ is a smooth hypersurface of degree $d$ with $d>4(\frac{n^2}{k^{\frac{3}{2}}}+k^{\frac{3}{2}})$, then the map \begin{align*}
    \Phi: \operatorname{Mor}_1(\PP^k,\PP^n)&\dashrightarrow \PP H^0(\PP^k,\OO_{\PP^k}(d))\\
    \iota &\mapsto [\iota^{-1}(X)].
\end{align*}
is of maximal rank.
\end{theorem}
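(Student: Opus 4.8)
The plan is to combine \Cref{thm:Bbound} with a suitable upper bound on $B_k(n)$, so the real content is an estimate $B_k(n) \leq O\!\left(\frac{n^2}{k^{3/2}} + k^{3/2}\right)$, after which the theorem is immediate by substituting into the inequality $d > B_k(n) + n + 1$ (the $n+1$ term is absorbed into the constant, since $n+1 \leq 2\cdot\frac{n^2}{k^{3/2}}+\ldots$ is comfortably satisfied, or one simply tracks constants carefully). So I would first reduce the statement to the combinatorial \Cref{prop:k2n/3}-style bound and then prove that bound.

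To bound $B_k(n)$, take a $k$-admissible sequence $(d_1,r_1),\dots,(d_a,r_a)$ of total degree $B_k(n)$; as in the proof of \Cref{prop:k2n/3} we may assume $d_1 = 0$ and $d_i > 0$ for $i > 1$. The admissibility condition forces $\frac{d_i}{r_i} \leq \frac{i-1}{k}$, hence $r_i \geq \frac{d_i k}{i-1}$ for $i \geq 2$, and the rank constraint gives $\sum_{i\geq 2} \frac{d_i k}{i-1} \leq \sum_i r_i = n$, i.e. $\sum_{i\geq 2}\frac{d_i}{i-1} \leq \frac{n}{k}$. We want to maximize $\sum_{i\geq 2} d_i$ subject to this, with the $d_i$ positive integers and $a$ (the number of terms) free. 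Ignoring integrality, the extreme case puts all weight on large index: with $a$ terms of equal value $d$, the constraint reads roughly $d \cdot H_{a-1} \leq n/k$ where $H_{a-1} = \sum_{j=1}^{a-1} \frac1j \approx \log a$, giving total $\approx (a-1)d \lesssim \frac{(a-1)}{\log a}\cdot \frac{n}{k}$ — which grows with $a$, so one must also use the constraint $a \leq n$ (there are only $n$ units of rank) together with $d_i \geq 1$ and the finer inequality. The honest optimization is: maximize $\sum_{i=2}^a d_i$ over positive integers $d_i$ and $a \le n$ with $\sum_{i=2}^a \frac{d_i}{i-1} \le \frac{n}{k}$. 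I would split the index range at a threshold $i_0 \sim \sqrt{k}$ (so that $\frac{i-1}{k} \le \frac{1}{\sqrt k}$ for $i \le i_0$, forcing $r_i$ to be large there): for $i \le i_0$ the rank cost $r_i \geq d_i k/(i-1) \geq \sqrt{k}\, d_i$ so those $d_i$'s contribute at most $n/\sqrt k$ in total; for $i > i_0$ there are at most $a \le n$ indices but more usefully, the total rank $n$ caps $\sum_{i>i_0} r_i$, and combined with $r_i \ge 1$ and the slope bound, an $\ell^1/\ell^\infty$ or rearrangement argument yields $\sum_{i>i_0} d_i \lesssim \sqrt k \cdot (\text{number of large-index terms used}) \lesssim \sqrt k \cdot \frac{n}{k}\cdot i_0 \sim \frac{n}{\sqrt k} + k^{?}$; balancing the two regimes at $i_0 \asymp \sqrt k$ produces the claimed shape $\frac{n^2}{k^{3/2}} + k^{3/2}$ (the first term dominant when $k \ll n^{2/3}$, the second when $k$ is large). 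The constant $4$ in the statement comes out of tracking these two contributions and the $+n+1$ slack.

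The main obstacle is getting the combinatorial optimization clean enough to land exactly on $4\!\left(\frac{n^2}{k^{3/2}}+k^{3/2}\right)$ rather than a messier expression: one has to choose the threshold $i_0$ carefully, handle the integrality of the $d_i$ and $r_i$ (rounding $r_i \geq \lceil d_i k/(i-1)\rceil$), and make sure the two regime-contributions each come in under half the target so their sum respects the bound. I expect the argument is cleanest by comparing an arbitrary admissible sequence to a near-extremal one of the form "$(0, n-\text{stuff})$ followed by blocks that saturate the slope inequality," and then invoking a convexity/exchange argument showing any deviation only decreases $\sum d_i$. The rest — feeding the bound into \Cref{thm:Bbound} and checking $B_k(n)+n+1 < 4(\frac{n^2}{k^{3/2}}+k^{3/2})$ under the hypothesis — is routine.

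(A remark for the writeup: since \Cref{Phiknsection} is referenced as the home of the detailed $B_k(n)$ analysis, the proof here should really just say "this follows from \Cref{thm:Bbound} together with the bound $B_k(n) \le 4(\frac{n^2}{k^{3/2}}+k^{3/2}) - n - 1$ proved in \Cref{Phiknsection}," and defer the combinatorics; I would structure it that way to avoid duplicating the optimization.)
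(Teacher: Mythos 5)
Your outer reduction is exactly the paper's: feed an upper bound on $B_k(n)$ of the shape $3\left(\frac{n^2}{k^{3/2}}+k^{3/2}\right)$ into \Cref{thm:Bbound}, absorbing the $n+1$ by AM--GM since $\frac{n^2}{k^{3/2}}+k^{3/2}\geq 2n\geq n+2$; and your closing remark is the paper's actual proof, which simply cites \Cref{Phibound}. Had you stopped there, the argument would be complete and essentially identical to the paper's.

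However, the combinatorial argument you sketch for the key estimate on $B_k(n)$ has a genuine gap. From admissibility you retain only $\frac{d_i}{r_i}\leq\frac{i-1}{k}$, $\sum_i r_i=n$ and $a\leq n$, and then try to maximize $\sum_i d_i$ subject to $\sum_{i\geq 2}\frac{d_i}{i-1}\leq\frac{n}{k}$. That relaxation is too lossy: taking $a=n$, $d_i=1$ for $2\leq i\leq n-1$ and dumping all remaining slack into $d_n$ gives an objective of order $\frac{n^2}{k}$ (already once $n\gg k\log n$), a factor $\sqrt{k}$ above the target, so no choice of threshold $i_0\sim\sqrt{k}$ and no rearrangement argument inside this constraint set can recover $\frac{n^2}{k^{3/2}}$; this is why your large-index estimate terminates in an unresolved exponent ``$k^{?}$''. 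The constraint you discarded, and which is the source of the $k^{3/2}$, is arithmetic rather than positional: a slope realized by a term of rank $r$ is a multiple of $\frac{1}{r}$, the slopes are nondecreasing with increments at most $\frac{1}{k}$, so crossing each unit interval of slope forces on the order of $k$ distinct slope values, and realizing $k$ distinct fractions in a unit interval costs total denominator (hence rank) at least roughly $\sum_{r\leq\sqrt{2k}}r\cdot r\sim\frac{2\sqrt{2}}{3}k^{3/2}$. This ``rank cost per unit of slope'' is exactly how \Cref{Phibound} argues (grouping the sequence into unit slope bands $n(j)$ and bounding each band's rank from below before summing $\sum_j j\,n(j)$), and it is the missing idea in your proposal: the resource to charge degree against is rank per unit slope, not the index $i$ at which a term appears. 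The remaining steps you describe (the reduction via \Cref{thm:Bbound}, the WLOG $d_1=0$ as in \Cref{prop:k2n/3}, and the final numerical check) are fine.
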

\begin{proof}
This follows from \Cref{Phibound}, where it is shown $B_k(n) \leq 3 \left(\frac{n^2}{k^{\frac{3}{2}}}+k^{\frac{3}{2}} \right)$. To finish, one checks that $\frac{n^2}{k^{\frac{3}{2}}}+k^{\frac{3}{2}}\geq 2n\geq n+2$. This follows from the AM-GM equality and the fact $n\geq 2$. 
\end{proof}

In \Cref{thm:knboundslice}, we prioritized giving a clean statement and proof over giving an optimal constant. Still, one can wonder what the optimal constant by computing $B_k(n)$ for small $k$ and all $n$. In this case, \Cref{cor:manyquadrics} gives the following result:
\begin{theorem}
\label{thm:smallkslice}
For $k\leq 5$. Then, there exists a linear function $\ell(n)$ and an integer $C_k$ such that $|B_k(n)-\frac{ n^2}{C_k}|\leq \ell(n)$. Here, $C_2=3,C_3=7,C_4=11,C_5=19$. In particular, the map
\begin{align*}
    \Phi: \operatorname{Mor}_1(\PP^k,\PP^n)&\dashrightarrow \PP H^0(\PP^k,\OO_{\PP^k}(d))\\
    \iota &\mapsto [\iota^{-1}(X)].
\end{align*}
is of maximal rank if $X\subset \PP^n$ is smooth and has degree $d\geq C_k n^2+\ell(n)+n+2$.
\end{theorem}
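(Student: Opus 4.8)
The plan is to reduce the statement to \Cref{thm:Bbound} together with an asymptotic estimate of the combinatorial quantity $B_k(n)$. The maximal-rank assertion is then immediate: once we know $B_k(n)\le\tfrac{n^2}{C_k}+\ell(n)$, any $d\ge\tfrac{n^2}{C_k}+\ell(n)+n+2$ satisfies $d>B_k(n)+n+1$, so \Cref{thm:Bbound} applies (recall that $T_{\PP^n}(-\log X)$ is semistable by \Cref{theorem:Guenancia} since $d\ge n+2$, that the tangent space to a fibre of $\Phi$ is a space of sections of a restriction of $T_{\PP^n}(-\log X)$ by \Cref{logtangent}, and that \Cref{lem:slopesOfRestrictionsTokPlanes} bounds those sections in terms of $B_k(n)$). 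So the work lies entirely in proving $\bigl|B_k(n)-\tfrac{n^2}{C_k}\bigr|\le\ell(n)$ for an explicit linear $\ell$.

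To estimate $B_k(n)$ I would first reformulate. In a $k$-admissible sequence the defining condition $d_1\le 0$ forces $d_1=0$ (as $d_1\ge 0$), so the slopes $\mu_i=d_i/r_i$ increase from $0$ with steps $\delta_i:=\mu_i-\mu_{i-1}\in[0,\tfrac1k]$, and Abel summation gives $\sum_i d_i=\sum_i r_i\mu_i=\sum_{i\ge2}\delta_i R_i$ with $R_i:=\sum_{j\ge i}r_j$. The key quantitative input is a lower bound on the rank budget needed to raise the slope: since $d_ir_{i-1}-d_{i-1}r_i$ is a positive integer whenever $\delta_i>0$, one has $\delta_i\ge\tfrac1{r_ir_{i-1}}$, so $\delta_i\le\tfrac1k$ already forces $r_ir_{i-1}\ge k$; more sharply, a slope staircase realized with small ranks must use, position by position, at least the ranks of a suitable eventually-periodic pattern, which has a well-defined ``rank per unit of slope'' $C_k$. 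Feeding $\delta_i\le\tfrac1k$ into the identity gives $\sum_i d_i\le\tfrac1k\sum_i(i-1)r_i$, which I would maximize over rank sequences with $\sum r_i=n$ obeying the growth constraint; since a length-$a$ such sequence costs rank $\gtrsim(C_k/k)\,a$ and hence caps the top slope at $\lesssim n/C_k$, optimizing the ``ramp, then one heavy term'' profile yields $B_k(n)\le\tfrac{n^2}{C_k}+O(n)$, the error being linear (boundary effects and integer rounding of the optimal ramp length).

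For the matching lower bound I would exhibit the extremal family explicitly: slopes $0,\tfrac1k,\tfrac2k,\dots$ --- or, where advantageous, a cleverer increasing sequence of rationals in $[0,1)$ with all gaps $\le\tfrac1k$ --- each realized at its minimal possible rank, run for a chosen length, followed by a last term placed at an integer slope (hence of unconstrained rank) that absorbs the remaining budget; its $\sum_i d_i$ is an explicit quadratic in the length parameter, and optimizing it matches the upper bound and fixes $\ell(n)$. For $k=2,3,4$ the naive all-$\tfrac1k$-steps staircase is optimal, so $C_k=\sum_{j=1}^k k/\gcd(j,k)$, giving $3,7,11$; for $k=5$ this naive value is $1+5+5+5+5=21$, but routing the forced stretch from $\tfrac15$ to $\tfrac45$ through the low-denominator slopes $\tfrac13,\tfrac12,\tfrac23$ (ranks $3,2,3$) instead of through $\tfrac25,\tfrac35$ (ranks $5,5$) saves $2$ per unit of slope, so $C_5=19$.

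The main obstacle --- and the reason a clean formula is claimed only for $k\le 5$ --- is showing that the chosen slope pattern is \emph{globally} optimal: one must rule out every trade between a fast ($\tfrac1k$-sized, large-denominator) step and a chain of slower, cheaper-denominator steps. This is a finite Farey/Stern--Brocot-type optimization over increasing sequences of rationals with bounded gaps and controlled denominators; it is elementary but genuinely fiddly, with no transparent closed form in $k$ (whence the irregular list $3,7,11,19$). I expect this verification, together with the bookkeeping of the linear error term $\ell(n)$, to be essentially all the work.
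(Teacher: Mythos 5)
Your reduction to \Cref{thm:Bbound} is exactly right, and your combinatorial picture of the extremal admissible sequences (a staircase of slopes realized at minimal denominators, with $C_k$ the total rank needed to raise the slope by one, and the $k=5$ saving $21\to 19$ by routing through $\tfrac13,\tfrac12,\tfrac23$) matches the structure the paper identifies --- compare the explicit sequence $(0,1)(1,5)(1,3)(1,2)(2,3)(4,5)(1,1)\cdots$ realizing $B_5(39)=39$, whose period has total rank $1+5+3+2+3+5=19$. But the decisive step is missing: you yourself flag that proving \emph{global} optimality of the chosen slope pattern (ruling out every trade between a large step of size $\tfrac1k$ and chains of cheaper-denominator steps, which is precisely what makes $C_5=19$ rather than $21$) is ``essentially all the work,'' and you give no argument for it. The inequality $\delta_i\ge\frac{1}{r_ir_{i-1}}$ and the Abel-summation bound $\sum_i d_i\le\frac1k\sum_i(i-1)r_i$ do not by themselves pin down the rank cost per unit of slope, and the phrase ``must use, position by position, at least the ranks of a suitable eventually-periodic pattern'' is an assertion of the theorem's hard content, not a proof of it. As written, the proposal is a plan whose core is deferred.

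The paper closes exactly this gap differently: it sets up recursive upper and lower bounds $B_k^{\up}$, $B_k^{\low}$ for $B_k(n)$ via the quantity $\mu^{\max}(n)$ (\Cref{lem:muMaxComputation}, \Cref{rbounds}), proves a periodicity criterion (\Cref{periodic}) showing that if $\mu^{\max}(i_0)=\frac{k-1}{k}$ and $B_k^{\up}(i)=B_k^{\low}(i)$ for all $i\le 3i_0$, then $B_k(n)$ is exactly periodic-quadratic in $n$, and then verifies this finite criterion by computer for $k\le 5$, yielding the closed forms of \Cref{cor:manyquadrics} from which the theorem follows. If you want to complete your route by hand, you would need an argument replacing that finite check --- e.g.\ an exchange/induction argument showing the greedy slope path is optimal --- and you should also watch the constant: your own ``ramp plus heavy term'' computation integrates to $\sum_i d_i\approx\frac{n^2}{2C_k}$, not $\frac{n^2}{C_k}$, so your claimed upper and lower bounds as stated do not actually meet (the exact formulas in \Cref{cor:manyquadrics} confirm the $\frac{n^2}{2C_k}$ normalization, with $C_k$ the period $i_0$).
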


We expect \Cref{thm:smallkslice} to hold for all values of $k$, but we can only check a finite number of cases with a computer. Roughly up to $k=100$ is what is reasonable with our methods. 

Given \Cref{thm:smallkslice}, one can ask how fast $C_k$ grows with $k$. We trivially know $C_k=O(k^2)$ by relaxing the condition that the $d_i$ are integers in the definition of an admissible sequence to compute $B_k(n)$ (in which case we let all the $r_i$ be equal to 1). We also get $C_k=\Omega(k^{\frac{3}{2}})$ from \Cref{Phibound}. From experimental evidence, we think that the actual answer is strictly between $k^{\frac{3}{2}}$ and $k^{2}$ but closer to $k^{\frac{3}{2}}$. 
\appendix
\section{Bounds and computations for $B_k(n)$}
\label{Phiknsection}
We will bound $B_k(n)$ for all $k,n$ in \Cref{Phibound}. We also compute $B_{k}(n)$ for $k$ small and arbitrary $n$, and give some conjectures about $B_{k}(n)$ in general. 

To give an idea of how the function $B_k(n)$ behaves, we note the results in \Cref{Phiknsection} can show $B_5(39)=39$, corresponding to the admissible sequence $$(0,1)(1,5)(1,3)(1,2)(2,3)(4,5)(1,1)(6,5)(4,3)(3,2)(5,3)(9,5)(2,1).$$

There are a couple of features of this admissible sequence we believe hold in general that we will only prove in special cases. First, this admissible sequence can be generated greedily, where we use greed to maximize the ratio $\frac{d}{r}$ of the last piece of the sequence. Second, the admissible sequence is essentially periodic in that the $(1,1)(6,5)(4,3)(3,2)(5,3)(9,5)$ is obtained from $(0,1)(1,5)(1,3)(1,2)(2,3)(4,5)$ by replacing each $(d,r)$ with $(d+r,r)$. We give a finite criterion that can be applied to show both the greedy property and the periodicity in \Cref{periodic} in \Cref{Phiknsection} below. 

We expect there are many other interesting patterns that can be found. For example, the segment $(0,1)(1,5)(1,3)(1,2)(2,3)(4,5)(1,1)$ of the admissible sequence above is preserved under reversing the order and replacing each $(d,r)$ with $(r-d,r)$. This pattern continues to hold for larger $k$ and suggests that these optimal admissible sequences can also be generated greedily backwards as well as forwards. 

\begin{proposition}
\label{Phibound}
We have $B_k(n) \leq 3 \left(\frac{n^2}{k^{\frac{3}{2}}}+k^{\frac{3}{2}} \right)$
\end{proposition}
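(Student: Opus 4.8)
The plan is to bound the total degree $\sum d_i$ of an arbitrary $k$-admissible sequence $(d_1,r_1),\dots,(d_a,r_a)$ with $\sum r_i = n$ by splitting the sequence into two regimes according to the size of the running slope $\mu_j = d_j/r_j$. Recall $\mu_1 \le 0$ and the slopes increase by steps of size at most $1/k$, so $\mu_j \le (j-1)/k$ along the way; also $d_j = r_j \mu_j \le r_j \cdot \mu_a$. Fix a threshold $t > 0$ (to be optimized; I expect $t \sim \sqrt{k}$). Let $j_0$ be the first index with $\mu_{j_0} > t$. For the \emph{low-slope} part $j < j_0$, every term satisfies $d_j \le r_j t$, so the contribution to $\sum d_i$ is at most $t \sum_{j<j_0} r_j \le tn$. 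The point of the threshold is that the low-slope terms are cheap per unit of rank spent.

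For the \emph{high-slope} part $j \ge j_0$, the key observation is that once $\mu_j > t$ we must have $r_j \ge k\cdot(\text{gap to previous slope})^{-1}$-type lower bounds; more directly, since consecutive slopes differ by at most $1/k$ and $d_j, r_j$ are integers with $d_j/r_j > t - 1/k$, a term with slope exceeding $t$ forces $r_j$ to be reasonably large — precisely, to realize a slope in $(t-1/k, \cdot]$ with integer $d_j$ one needs $r_j \gtrsim k/(\text{something})$, but the cleaner bound is: the number of distinct terms in the high part is limited, and each consumes rank proportional to $k/t$ roughly, while contributing degree at most $r_j \mu_a$. Since the total rank available is $n$, the number of high terms times their minimal rank is at most $n$, bounding how large $\mu_a$ can grow: $\mu_a \le \mu_{j_0-1} + (\#\text{high terms})/k \le t + (\text{bound})/k$. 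Combining, the high part contributes at most $(\sum_{j\ge j_0} r_j)\mu_a \le n\mu_a$, and one shows $\mu_a = O(t + n/(kt))$ so the high contribution is $O(nt + n^2/(kt))$.

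Adding the two regimes gives $B_k(n) = O\!\left(nt + \frac{n^2}{kt}\right)$, and optimizing over $t$ by AM–GM (take $t$ of order $\sqrt{n/k}\cdot\sqrt{k} = \sqrt{n}$, or rather the balance point $nt = n^2/(kt)$, i.e. $t = \sqrt{n/k}$) would yield a bound of order $n^{3/2}/\sqrt{k}$ — which is better than claimed, so presumably the actual argument is cruder: it likely takes $t$ a fixed power of $k$ (namely $t = \sqrt{k}$), giving low part $\le n\sqrt{k} \le n\cdot k^{3/2}$ trivially bounded, and high part controlled by noticing that with slope steps $1/k$ and ranks $\ge 1$, after the slope passes $\sqrt k$ only $O(n/\sqrt k)$ further terms fit, pushing $\mu_a \le \sqrt k + O(n/k^{3/2})$, hence $\sum d_i \le n\mu_a = n\sqrt k + O(n^2/k^{3/2})$, and then a uniform manipulation absorbs everything into $3(n^2 k^{-3/2} + k^{3/2})$. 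I would carry this out by: (1) setting up the running-slope inequalities $\mu_j \le (j-1)/k$ and $d_j \le r_j\mu_a$; (2) choosing the threshold $\sqrt k$ and bounding the number and total rank of super-threshold terms; (3) assembling the two bounds and checking the final inequality against $3(n^2/k^{3/2} + k^{3/2})$ by brute comparison of terms.

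The main obstacle I expect is step (2): getting a clean lower bound on the rank consumed by high-slope terms. The naive bound "$r_j \ge 1$" is too weak; one needs that a term \emph{increasing} the slope from near $\sqrt k$ by at most $1/k$ while keeping $d_j/r_j$ a ratio of integers forces $r_j$ to be large, or alternatively that the \emph{cumulative} rank of high terms is at least (number of high terms) times $\Theta(\sqrt k)$. Making this rigorous — perhaps via a telescoping/convexity argument on $\sum r_j \mu_j$ versus $\mu_a \sum r_j$, or by directly estimating $\sum_{j} r_j(\mu_a - \mu_j) \ge$ something — is where the real work lies; everything else is bookkeeping and an AM–GM-style final inequality using $n \ge 2$.
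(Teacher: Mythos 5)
Your two-regime scheme has the right flavor, but the step you yourself flag as ``where the real work lies'' is exactly the missing content, and the placeholder estimates you use in its stead are not correct. An individual term with slope above your threshold $t$ is in no way forced to have large rank: $(d_j,r_j)=(\lceil t\rceil+1,1)$ is a perfectly admissible high-slope term, so neither ``$r_j\gtrsim k/t$'' nor ``each high term consumes rank $\ge t$'' holds, and with only the honest bound $r_j\ge 1$ your argument gives $\mu_a\le t+n/k$ and hence $B_k(n)=O(nt+n^2/k)$ --- which does not imply the stated $3(n^2k^{-3/2}+k^{3/2})$ (take $n\approx k^{3/2}$). Your side remark that optimizing $t$ would give $O(n^{3/2}/\sqrt{k})$, ``better than claimed,'' is a further red flag: $B_k(n)$ grows quadratically in $n$ for fixed $k$ (e.g.\ $B_2(3n+1)=\frac{3n^2+n}{2}$ in \Cref{cor:manyquadrics}), so that bound is false, confirming that the high-part estimate $\mu_a=O(t+n/(kt))$ cannot be right as proposed.

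The idea that closes the gap (and is how the paper argues) is a rank-per-unit-of-slope bound rather than a rank-per-term bound. Partition the terms by which unit interval $[j-1,j)$ their slope lies in, and let $n(j)$ be the rank spent there; then $B_k(n)\le\sum_j j\,n(j)$. Crossing one unit of slope requires at least $k$ terms, since consecutive slopes increase by at most $1/k$. The crucial arithmetic observation is that a term of rank $m$ has slope a rational with denominator $m$, and a unit interval contains at most $m$ such values; so among the $\ge k$ terms in a completed band, at most $m$ can have rank $m$, forcing the band's total rank to be at least $1+2\cdot1+3\cdot2+\cdots\ \gtrsim \frac{2\sqrt2}{3}k^{3/2}$. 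This is what bounds the number of bands by roughly $n/k^{3/2}$ (equivalently $\mu_a\lesssim n/k^{3/2}$, with no threshold needed) and yields $B_k(n)\lesssim n^2/k^{3/2}$ plus lower-order terms absorbed via AM--GM. Your suggested telescoping/convexity on $\sum r_j\mu_j$ does not by itself produce this $k^{3/2}$; without the denominator-counting input, the scheme tops out at $n^2/k$.
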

\begin{proof}
Let $(d_1,r_1), \cdots, (d_a,r_a)$ be an admissible sequence with $\sum_i d_i = B_k(n)$. Let $\mu_i = \frac{d_i}{r_i}$. Let $n(j)$ be the sum of the $r_i$ such that $\mu_i\in [j-1,j)$.

Since the $\mu_i$ contributing to $n(j)$ are all less than $j$, we observe that
\[ B_k(n) \leq \sum_{j=1}^\infty jn(j)  .\]
Thus, understanding the $n(j)$ allows us to bound $B_k(n)$. The sum of all of the $n(j)$ is $n$. Let $J$ be the last nonzero $n(j)$, so $B_k(n) \leq \sum_{j=1}^{J} j n(j)$.

Let $n_k^{\min}$ be a positive number that is at most $n(j)$ for any $j < J$. Then we obtain an upper bound for $B_k(n)$
\begin{align*}
     B_k(n)& \leq \sum_{i=1}^J jn(j)\leq n_k^{\min}+2n_k^{\min}+\cdots+\lceil \frac{n}{n_k^{\min}}\rceil n_k^{\min}\\
     &= n_k^{\min} \frac{\lceil \frac{n}{n_k^{\min}}\rceil(1+\lceil\frac{n}{n_k^{\min}}\rceil)}{2}\\
     &\leq n_k^{\min} \frac{(\frac{n}{n_k^{\min}}+1)(\frac{n}{n_k^{\min}}+2)}{2}
\end{align*}

Thus, it remains to give a bound for $n_k^{\min}$. Fix $j<J$ and let $(d_{i_j+1},r_{i_j+1}),\ldots,(d_{i_j+c(j)},r_{i_j+c(j)})$ be the part of the admissible sequence with slopes $\frac{d_{i_j+1}}{r_{i_j+1}},\ldots,\frac{d_{i_j+c(j)}}{r_{i_j+c(j)}}$ in $[j-1,j)$. By definition, $r_{i_j+1}+\cdots+r_{i_j+c(j)}=n(j)$. First, we show $c(j)\geq k$. To see this, we first note that $\frac{d_{i_j+1}}{r_{i_j+1}}< (j-1)+\frac{1}{k}$. If $j=1$, then this is true because $\frac{d_{i_j+1}}{r_{i_j+1}}\leq 0$ by definition. If $j>1$, then this is true because $\frac{d_{i_j+1}}{r_{i_j+1}}\leq \frac{d_{i_j}}{r_{i_j}}+\frac{1}{k}<(j-1)+\frac{1}{k}$. 

Since $\frac{d_{i_j+1}}{r_{i_j+1}}< (j-1)+\frac{1}{k}$, we then see that 
\begin{align*}
    \frac{d_{i_j+2}}{r_{i_j+2}}&\leq \frac{d_{i_j+1}}{r_{i_j+1}}+\frac{1}{k}< (j-1)+\frac{2}{k}\\
    \vdots\quad &\quad  \vdots\\
    \frac{d_{i_j+k}}{r_{i_j+k}}&\leq  \frac{d_{i_j+k-1}}{r_{i_j+k-1}}+\frac{1}{k}<j,
\end{align*}
so $c(j)\geq k$. 

Now, we want to bound $r_{i_j+1}+\cdots+r_{i_j+c(j)}=n(j)$. In the multi-set $\{r_{i_j+1},\ldots,r_{i_j+c(j)}\}$, we know that there is at most element that is equal to 1, fewer than two elements that are equal to 2, fewer than three elements that are equal to 3 and so on. Therefore, if $m$ is the largest integer such that $1+(1+\cdots + m-1) = 1+\frac{m(m-1)}{2}$ is at most $k$, then $\frac{(m-1)^2}{2}<\frac{m(m_1)}{2}+1\leq k$, so $m \leq \sqrt{2k}+1$. Thus,
\begin{align*}
    n(j) = r_{i_j+1}+\cdots+r_{i_j+c(j)}&\geq 1+(2\cdot 1+3\cdot 2+\cdots m\cdot(m-1))\\
    &= 1+2(\binom{2}{2}+\binom{3}{2}+\cdots+\binom{m}{2})\\
    &= 1+ \frac{(m+1)m(m-1)}{3}\\
    &\geq \frac{(\sqrt{2k}+2)(\sqrt{2k}+1)\sqrt{2k}}{3}>\frac{2\sqrt{2}}{3}k^{\frac{3}{2}}.
\end{align*}
Thus, choosing $n_k^{\min}$ to be $\frac{2\sqrt{2}}{3}k^{\frac{3}{2}}$ suffices. Plugging into our earlier bound, we get an upper bound for $B_k(n)$ as 
\begin{align*}
    n_k^{\min} \frac{(\frac{n}{n_k^{\min}}+1)(\frac{n}{n_k^{\min}}+2)}{2}=\frac{n^2}{2n_k^{\min}}+ \frac{3n}{2}+n_k^{\min}=\frac{n^2}{k^{\frac{3}{2}}}\frac{9}{4\sqrt{2}}+\frac{3n}{2}+\frac{2\sqrt{2}}{3}k^{\frac{3}{2}},
\end{align*}
which is at most $2(\frac{n^2}{k^{\frac{3}{2}}}+n+k^{\frac{3}{2}})$. Applying the AM-GM inequality yields
\begin{align*}
    \frac{n^2}{k^{\frac{3}{2}}}+k^{\frac{3}{2}}\geq 2n,
\end{align*}
yielding the claimed bound.

\end{proof}

We now move on to computing exact values of $B_k(n)$ for small $k$. Our strategy is a recursive algorithm that requires some conditions to be met, and we suspect that these conditions are always met. In the course of our proof, we will use the three quantities $\mu^{\max}(n)$, $B_k^{\up}(n)$ and $B_k^{\low}(n)$. We define $\mu^{\max}(n)$ by
\[ \mu^{\max}(n) := \max\{\frac{r_a}{d_a}\mid (d_1,r_1),\ldots(d_a,r_a)\text{ in }A_n\}. \]

\begin{lemma}
\label{lem:muMaxComputation}
We can compute $\mu^{\max}(n)$ inductively by $\mu^{\max}(1) = 0$ and
\[ \mu^{\max}(n) = \max\{\frac{\lfloor(\mu^{\max}(i)+\frac{1}{k})(n-i)\rfloor}{n-i}\mid 0<i<n\} .\]
\end{lemma}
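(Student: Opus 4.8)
\textbf{Proof proposal for \Cref{lem:muMaxComputation}.}

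The plan is to prove the recursion by establishing two inequalities, matching the ``$\geq$'' and ``$\leq$'' directions of the claimed formula, after first isolating the correct structural feature of an optimal admissible sequence. The key observation is that $\mu^{\max}(n)$ is the largest value of $\frac{d_a}{r_a}$ over all $k$-admissible sequences with $\sum r_i = n$, so by definition we may split any such sequence after its first entry $(d_1,r_1) = (0,r_1)$ (we may always assume $d_1 = 0$, since replacing $d_1$ by $0$ only makes the later slopes larger, hence can only increase $\frac{d_a}{r_a}$): the remaining entries $(d_2,r_2),\dots,(d_a,r_a)$ have ranks summing to $n - r_1$, and they form a sequence that is ``$k$-admissible relative to the starting slope $\frac{d_1}{r_1} = 0$.'' So the first step is to set up this relative version cleanly, writing $i := r_1$ so that $0 < i < n$.

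For the lower bound (``$\geq$''), I would fix $i$ with $0 < i < n$ and take an admissible sequence on $A_{n-i}$ achieving slope $\mu^{\max}(n-i)$ in its last term; I would then prepend $(0,i)$ and shift every later slope up as far as admissibility over the gap allows — concretely, the last term can have slope at most $\mu^{\max}(n-i) + \frac{1}{k}$ relative to an internal slope of $0$, but the degree must be an integer and the rank is fixed, so the best achievable last slope over a block of total rank $n-i$ is $\frac{\lfloor (\mu^{\max}(i) + \frac1k)(n-i)\rfloor}{n-i}$; wait — more carefully, one shifts the whole tail by a constant integer amount per term to keep admissibility, and the honest bookkeeping is that the optimum over all $i$ of $\frac{\lfloor(\mu^{\max}(i) + \frac1k)(n-i)\rfloor}{n-i}$ is realizable. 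Taking the maximum over $i$ gives ``$\geq$''. The base case $\mu^{\max}(1) = 0$ is immediate since the only sequence is $(0,1)$.

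For the upper bound (``$\leq$''), I would take any admissible sequence $(d_1,r_1),\dots,(d_a,r_a)$ on $A_n$ with $d_1 = 0$, and consider the largest index $j$ with $d_j \leq 0$, equivalently the last ``nonpositive-slope'' block; set $i := r_1 + \cdots + r_j$. The terminal segment $(d_{j+1},r_{j+1}),\dots,(d_a,r_a)$ starts with slope at most $\frac{d_j}{r_j} + \frac1k \leq \mu^{\max}(i) + \frac1k$ by the inductive hypothesis applied to the initial segment (which lives in $A_i$), and then the slopes increase by at most $\frac1k$ at each of the remaining steps while the ranks sum to $n-i$; a telescoping/convexity estimate then bounds $\frac{d_a}{r_a}$ by $\frac{\lfloor(\mu^{\max}(i)+\frac1k)(n-i)\rfloor}{n-i}$, using that $d_a$ is an integer. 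This gives ``$\leq \max_i$'' and completes the induction.

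The main obstacle I expect is the upper-bound step: making rigorous that, given a bound on the starting slope and the per-step increment $\frac1k$ over a block of prescribed total rank, the \emph{integrality} of the final degree $d_a$ forces exactly the floor expression $\frac{\lfloor(\mu^{\max}(i)+\frac1k)(n-i)\rfloor}{n-i}$ rather than something merely close to it. One has to argue that intermediate ranks can be chosen (and the adversary's sequence bounded) so that no ``rounding slack'' accumulates in the wrong direction — essentially a discrete optimization over partitions of $n-i$ into the $r$'s — and verify that the greedy ``push each slope up by $\frac1k$'' strategy is genuinely optimal for the last coordinate. The lower bound is comparatively routine once the relative-admissibility setup is in place.
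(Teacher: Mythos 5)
There is a genuine gap: you decompose an admissible sequence at the wrong end, and both directions of your argument suffer for it. The recursion corresponds to splitting off the \emph{last} entry, not the first. If $(d_1,r_1),\ldots,(d_a,r_a)$ realizes $\mu^{\max}(n)$ and $a>1$, set $i=r_1+\cdots+r_{a-1}$; the initial segment is itself an admissible sequence of total rank $i$, so $\frac{d_{a-1}}{r_{a-1}}\leq\mu^{\max}(i)$, hence $\frac{d_a}{r_a}\leq\mu^{\max}(i)+\frac{1}{k}$, and since $r_a=n-i$ and $d_a$ is an integer, $d_a\leq\lfloor(\mu^{\max}(i)+\frac{1}{k})(n-i)\rfloor$. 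The floor thus appears in one line, and the ``discrete optimization over partitions of $n-i$'' you flag as the main obstacle never arises. Your upper-bound step instead splits after the last nonpositive-degree entry and claims the final slope of the entire remaining tail is bounded by $\frac{\lfloor(\mu^{\max}(i)+\frac{1}{k})(n-i)\rfloor}{n-i}$ for that particular $i$; this is false, because the tail may contain many entries, each raising the slope by up to $\frac{1}{k}$. Concretely, for $k=5$ the admissible sequence $(0,1)(1,5)(1,3)(1,2)(2,3)(4,5)(1,1)(6,5)(4,3)(3,2)(5,3)(9,5)(2,1)$ of total rank $39$ (from \Cref{Phiknsection}) has final slope $2$, while your split forces $i=1$ and the bound $\frac{\lfloor 38/5\rfloor}{38}=\frac{7}{38}$; the maximum in the recursion is instead attained at $i=38$. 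Knowing only that the final slope is at most \emph{some} term of the maximum requires choosing the right $i$, and your choice does not work.

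Your lower bound is likewise not set up to produce the stated formula: you prepend $(0,i)$ to a sequence of rank $n-i$ and try to ``shift every later slope up,'' but a uniform non-integral shift of slopes destroys integrality of the degrees, and the quantity this construction would control involves $\mu^{\max}(n-i)$ rather than the $\mu^{\max}(i)$ appearing in the recursion --- a mismatch you notice mid-argument but do not resolve. The construction matching the recursion goes the other way, as in the paper: take an admissible sequence of total rank $i$ whose last slope is $\mu^{\max}(i)$ and append the single block $(\lfloor(\mu^{\max}(i)+\frac{1}{k})(n-i)\rfloor,\,n-i)$. With the decomposition at the last entry, both inequalities follow by the straightforward induction, and no ``relative admissibility'' apparatus is needed.
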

\begin{proof}
Let $\mu^{\max'}_n = \max\{\frac{\lfloor(\mu^{\max}(i)+\frac{1}{k})(n-i)\rfloor}{n-i}\mid 0<i<n\}$.

We use induction. The base case $n=1$ is vacuous, so assume $n>1$. First, we show that $\mu^{\max\prime}_n\leq \mu^{\max}_n$. Given any $0<i<n$, $\frac{\lfloor(\mu^{\max\prime}_i+\frac{1}{k})(n-i)\rfloor}{n-i}$ is a slope achieved by taking an admissible sequence $(d_1,r_1),\ldots,(d_a,r_a)$ in $A_i$ and appending $(\lfloor(\mu^{\max\prime}_i+\frac{1}{k})(n-i)\rfloor,n-i)$. So by definition $\mu^{\max\prime}_n\leq \mu^{\max}_n$. 

Now we show $\mu^{\max\prime}_n\geq \mu^{\max}_n$. Let $(d_1,r_1),\ldots(d_a,r_a)$ be an admissible sequence in $A_n$ achieving $\frac{d_a}{r_a}= \mu^{\max}_n$. If $a=1$, then $d_a=0$ so $\mu^{\max}_n=0$ while $\mu^{\max\prime}_n$ is by definition nonnegative. Otherwise, let $i=r_1+\cdots+r_{a-1}$, so $\frac{d_{a-1}}{r_{a-1}}\leq \mu^{\max}_i=\mu^{\max\prime}_i$ by definition and the assumption hypothesis. Then, $r_a = n-i$
\begin{align*}
    \frac{d_a}{r_a}\leq \mu^{\max\prime}_i+\frac{1}{k},
\end{align*}
so $d_a\leq \lfloor(\mu^{\max\prime}_i+\frac{1}{k})(n-i)\rfloor$. Then, by definition $\mu^{\max\prime}_n\geq \mu^{\max}_n$. Therefore, we are done and $\mu^{\max\prime}_n=\mu^{\max}_n$ for all $n$. 
\end{proof}

We define $B_k^{\up}(n)$ recursively by $B_k^{\up}(1) = 0$ and
\[ B_k^{\up}(n) = \max\{B_k^{\up}(i)+\lfloor(\mu^{\max}_i+\frac{1}{k})(n-i)\rfloor\mid 0<i<n\} .\]

For $B_k^{\low}$, we let $B_k^{\low}(1) = 0$ and let $i(n)$ be the smallest $i$ that maximizes $\frac{\lfloor(\mu^{\max}(i)+\frac{1}{k})(n-i)\rfloor}{n-i}$. Then define $B_k^{\low}$ inductively by
\[ B_k^{\low}(n)=B_k^{\low}(i(n))+\lfloor(\mu^{\max}(i(n))+\frac{1}{k})(n-i(n))\rfloor.  \]

We now show that $B_k(n)$ is bounded by $B_k^{\up}(n)$ and $B_k^{\low}(n)$.
\begin{lemma}
\label{rbounds}
We have $B_k^{\low}(n) \leq B_k(n) \leq B_k^{\up}(n)$.
\end{lemma}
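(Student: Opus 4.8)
The plan is to prove the two inequalities separately, both by induction on $n$, using the recursive definitions of $B_k^{\up}(n)$, $B_k^{\low}(n)$, and $\mu^{\max}(n)$ together with \Cref{lem:muMaxComputation}; the base case $n=1$ is trivial since $A_{k,1}=\{(0,1)\}$ and all three quantities vanish.

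For the upper bound $B_k(n)\leq B_k^{\up}(n)$ I would fix an admissible sequence $(d_1,r_1),\dots,(d_a,r_a)$ in $A_{k,n}$ with $\sum_i d_i=B_k(n)$ and assume $a\geq 2$ (the case $a=1$ forces $d_1=0$ and is trivial). Setting $i=r_1+\cdots+r_{a-1}=n-r_a$, so that $0<i<n$, the truncation $(d_1,r_1),\dots,(d_{a-1},r_{a-1})$ is admissible in $A_{k,i}$; hence $\sum_{j<a}d_j\leq B_k(i)\leq B_k^{\up}(i)$ by the inductive hypothesis, and $\tfrac{d_{a-1}}{r_{a-1}}\leq\mu^{\max}(i)$ by definition of $\mu^{\max}$. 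Admissibility of the final step then gives $d_a\leq\bigl(\mu^{\max}(i)+\tfrac{1}{k}\bigr)(n-i)$, and since $d_a\in\Z$ also $d_a\leq\bigl\lfloor(\mu^{\max}(i)+\tfrac{1}{k})(n-i)\bigr\rfloor$. Summing these and comparing with the $i$-th term in the defining recursion of $B_k^{\up}$ yields $B_k(n)\leq B_k^{\up}(n)$.

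For the lower bound $B_k^{\low}(n)\leq B_k(n)$ I would instead prove by induction the sharper statement that there is an admissible sequence in $A_{k,n}$ whose degrees sum to $B_k^{\low}(n)$ and whose last slope equals $\mu^{\max}(n)$. Starting from such a sequence for $i(n)$, one appends the block $\bigl(\lfloor(\mu^{\max}(i(n))+\tfrac{1}{k})(n-i(n))\rfloor,\ n-i(n)\bigr)$: by \Cref{lem:muMaxComputation} and the choice of $i(n)$ as a maximizer this block has slope exactly $\mu^{\max}(n)$, its degree adds to $B_k^{\low}(i(n))$ to give $B_k^{\low}(n)$ by definition, and the enlarged sequence then has the prescribed last slope, so that $B_k(n)\geq B_k^{\low}(n)$ --- assuming the enlarged sequence is admissible.

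Verifying this admissibility is the step I expect to be the real obstacle: one must check $0\leq\mu^{\max}(n)-\mu^{\max}(i(n))\leq\tfrac{1}{k}$, and although the upper bound is immediate from the floor, the inequality $\mu^{\max}(n)\geq\mu^{\max}(i(n))$ is genuinely a statement about the function $\mu^{\max}$. I would deduce it from the monotonicity of $\mu^{\max}$, proved by a padding argument: in the admissible sequence realizing $\mu^{\max}(m)$ the first block has degree $0$, so increasing the rank of that block by $n-m$ changes no slope and produces an admissible sequence in $A_{k,n}$ with the same last slope; hence $\mu^{\max}(n)\geq\mu^{\max}(m)$ whenever $n\geq m$, and in particular for $m=i(n)<n$. (The same trick shows $B_k$ is non-decreasing in $n$, which we will not need.)
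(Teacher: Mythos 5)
Your proof is correct and follows essentially the same route as the paper: the upper bound by truncating a degree-maximizing admissible sequence at its last block and inducting, and the lower bound by inductively producing an admissible sequence of total degree $B_k^{\low}(n)$ whose last slope equals $\mu^{\max}(n)$ and appending the block $\bigl(\lfloor(\mu^{\max}(i(n))+\tfrac{1}{k})(n-i(n))\rfloor,\,n-i(n)\bigr)$. The admissibility check you single out (that $\mu^{\max}(n)\geq\mu^{\max}(i(n))$, which you get from weak monotonicity of $\mu^{\max}$ via padding the degree-zero first block) is left implicit in the paper's proof, so your explicit verification is a harmless refinement rather than a different approach.
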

\begin{proof}


First we show $B_k^{\low}(n) \leq B_k(n)$ by induction. To do this, we show by induction that $B_k^{\low}(n)$ is always achieved by an admissible sequence $(d_1,r_1),\ldots,(d_a,r_a)$ with $\frac{d_a}{r_a} = \mu^{\max}(n)$ and $d_1+\cdots+d_a = B_k^{\low}(n)$. The base case $n=1$ vacuous, so we assume $n>1$. Let $i$ be the minimal index maximizing $\frac{\lfloor(\mu^{\max}_i+\frac{1}{k})(n-i)\rfloor}{n-i}$. By the induction assumption, there is an admissible sequence $(d_1,r_1),\ldots,(d_{a-1},r_{a-1})$ achieving $\frac{d_{a-1}}{r_{a-1}} = \mu^{\max}_i$ and $d_1+\cdots+d_{a-1} = B_k^{\low}(i)$. By appending $(\lfloor(\mu^{\max}_i+\frac{1}{k})(n-i)\rfloor,n-i)$ to the sequence we get an admissible sequence $(d_1,r_1),\ldots,(d_a,r_a)$ with  $\frac{d_a}{r_a} = \mu^{\max}_n$ and $d_1+\cdots+d_a = B_k^{\low}(n)$. 

Finally, we show $B_k^{\up}(n) \geq B_k(n)$ by induction. The base case $n=1$ is vacuous, so assume $n>1$. Let $(d_1,r_1),\ldots(d_a,r_a)$ be an admissible sequence in $A_n$ achieving $d_1+\cdots+d_a=B_k(n)$. If $a=1$, then $B_k(n)=0$ and $B_k^{\up}(n)$ is always nonnegative by definition. If $a>1$, then let $i=r_1+\cdots+r_{a-1}$ so $(d_1,r_1),\ldots(d_{a-1},r_{a-1})$ is an admissible sequence in $A_i$. By the inductive hypothesis, $d_1+\cdots+d_{a-1}\leq B_k^{\up}(i)$. We have $r_a=n-i$ and the maximum $d_a$ can be is $\lfloor(\mu^{\max}(i)+\frac{1}{k})(n-i)\rfloor$. Therefore,
\begin{align*}
   d_1+\cdots+d_{a}\leq B_k^{\up}(i)+\lfloor(\mu^{\max}(i)+\frac{1}{k})(n-i)\rfloor\leq B_k^{\up}(n),
\end{align*}
finishing the proof.
\end{proof}

From experimental evidence, we suspect $B_k^{\low}(n)$ and $B_k^{\up}(n)$ always coincide, which would give a recursive algorithm for $B_k(n)$. However, to give results for small values of $k$ and all $n$, we want to have a finite criterion that can be verified by a computer. We believe admissible sequences achieving $B_k(n)$ will always following a periodic structure in $n$ with $k$ fixed reflected in \Cref{periodic} below.
\begin{lemma}
\label{periodic}
Suppose $\mu^{\max}(i_0)=\frac{k-1}{k}$ for some $i_0$. Then $\mu^{\max}(n) = \mu^{\max}(n-i_0) + 1$ all $n\geq i_0$. If in addition $B_k^{\up}(i)=B_k^{\low}(i)$ for each $i\leq 3i_0$, then $B_k^{\low}(n)=B_k(n)=B_k^{\up}(n)$ for all $n$. 
\end{lemma}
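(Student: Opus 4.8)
The plan is to establish the periodicity statement for $\mu^{\max}$ first, then bootstrap it to $B_k^{\up}$ and $B_k^{\low}$. The elementary observation that powers everything is that admissibility of a sequence $(d_1,r_1),\dots,(d_a,r_a)$ depends only on the slopes $\mu_j=d_j/r_j$ and their consecutive differences, not on the sizes $r_j$; since $d_1=0$, replacing the first term $(0,r_1)$ by $(0,r_1+t)$ for any $t\geq 0$ gives another admissible sequence with the same last slope and the same $\sum_i d_i$ but with $\sum_i r_i$ larger by $t$. This shows at once that $\mu^{\max}$, $B_k$, and $B_k^{\up}$ are non-decreasing, and it is also the engine for the ``$\geq$'' half of the periodicity. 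A second recurring device is the ``raise all slopes by $1$'' operation $(d,r)\mapsto(d+r,r)$, which preserves consecutive slope differences and adds $\sum_i r_i$ to $\sum_i d_i$.

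For $\mu^{\max}(n)=\mu^{\max}(n-i_0)+1$ (I would prove this for $n>i_0$; the case $n=i_0$ is the hypothesis), the lower bound comes from concatenating a sequence in $A_{k,i_0}$ with last slope $\frac{k-1}{k}$ with a $\mu^{\max}$-optimal sequence for $n-i_0$ having all slopes raised by $1$: at the splice the slope jumps by exactly $1-\frac{k-1}{k}=\frac1k$, so the result is admissible, has total rank $n$, and ends at slope $\mu^{\max}(n-i_0)+1$. The upper bound is a strong induction on $n$ using \Cref{lem:muMaxComputation}: the term $\frac{\lfloor(\mu^{\max}(i)+\frac1k)(n-i)\rfloor}{n-i}$ is at most $1\leq\mu^{\max}(n-i_0)+1$ when $i\leq i_0$ (monotonicity gives $\mu^{\max}(i)\leq\mu^{\max}(i_0)=\frac{k-1}{k}$, so $\mu^{\max}(i)+\frac1k\leq 1$), is exactly $1$ when $i=i_0$, and for $i>i_0$ equals $1+\frac{\lfloor(\mu^{\max}(i-i_0)+\frac1k)((n-i_0)-(i-i_0))\rfloor}{(n-i_0)-(i-i_0)}\leq 1+\mu^{\max}(n-i_0)$ by the inductive periodicity at $i$ together with \Cref{lem:muMaxComputation} at $n-i_0$. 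A byproduct to record is that $i_0$ is the \emph{unique} index with $\mu^{\max}=\frac{k-1}{k}$: once that value is attained, the next value is at least $\lfloor\frac{k-1}{k}+\frac1k\rfloor=1$, so by monotonicity $\mu^{\max}(i)<\frac{k-1}{k}$ strictly for $i<i_0$.

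For the second statement it suffices, by \Cref{rbounds}, to prove $B_k^{\up}(n)=B_k^{\low}(n)$ for all $n$, which I would do by strong induction: it holds for $n\leq 3i_0$ by hypothesis, and for $n>3i_0$ it follows once both quantities satisfy the shifted recursion $B(n)=B(i_0)+B(n-i_0)+(n-i_0)$ (then apply the inductive hypothesis at $n-i_0$ and at $i_0$). For $B_k^{\low}$ this recursion holds for all $n>i_0$: unwinding the definition through $i(n)$, the smallest maximizer of $\frac{\lfloor(\mu^{\max}(i)+\frac1k)(n-i)\rfloor}{n-i}$, the term identities above give $i(n)=i_0$ when $\mu^{\max}(n)=1$ — and then $\mu^{\max}(n-i_0)=0$ forces all the relevant floors to vanish, so $B_k^{\low}(n-i_0)=0$ and the recursion is immediate — while $i(n)=i_0+i(n-i_0)$ when $\mu^{\max}(n)>1$, where substituting $\mu^{\max}(i_0+i(n-i_0))=\mu^{\max}(i(n-i_0))+1$ and applying the recursion inductively at $i_0+i(n-i_0)<n$ yields the claim. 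For $B_k^{\up}$ one works from its max-recursion: the ``$\geq$'' direction uses the term at index $i_0+j$ with $j$ the $B_k^{\up}$-optimal index for $n-i_0$, combined with $\mu^{\max}(i_0+j)=\mu^{\max}(j)+1$ and the inductive recursion at $i_0+j$; the ``$\leq$'' direction splits on the optimal index $i^{*}$ exactly as in the $\mu^{\max}$ argument, the only genuine case being $i^{*}<i_0$, where $\mu^{\max}(i^{*})<\frac{k-1}{k}$ costs a ``$+1$'' worth of degree which must be recovered from the bound $B_k^{\up}(n-i_0)\geq B_k(n-i_0)\geq (n-i_0)-i_0>i_0$ — obtained by the ``concatenate and raise slopes'' construction — which is precisely where $n>3i_0$ enters, forcing the hypothesis to cover $i\leq 3i_0$.

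The hard part will be exactly this last case, $i^{*}<i_0$, in the $B_k^{\up}$ recursion: one must rule out that a short prefix (total rank below $i_0$) followed by one long final block can outperform the ``$i_0$, then the raised remainder'' configuration, and the margin only becomes favorable once $n$ exceeds roughly $3i_0$. Everything in the $\mu^{\max}$ part is routine once the ``inflate the first term'' monotonicity is noted, and the persistent minor nuisance throughout is that the floor functions interact with the increment bound $\frac1k$ so as to demand case distinctions on whether a block's slope sits exactly on an integer boundary; these are all elementary.
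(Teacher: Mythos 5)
Your proposal is correct and follows essentially the same strategy as the paper's proof: establish the periodicity of $\mu^{\max}$ from the recursion in \Cref{lem:muMaxComputation} (your two-sided argument — concatenation for the lower bound, termwise estimates for the upper bound — is a cleaner packaging of the paper's analysis of the maximizing index $i(n)$, and your case split $i(n)=i_0$ versus $i(n)=i_0+i(n-i_0)$ is exactly the paper's identity \eqref{eq:i(n)}), then run the induction for $n>3i_0$ by shifting the defining maximum by $i_0$ and ruling out split indices $i^{*}\leq i_0$ by comparing the crude bound ``term $<n$'' against ``$B_k^{\low}(n)>n$'', which is precisely where $n>3i_0$ enters in the paper as well. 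Your explicit shifted recursion $B(n)=B(i_0)+B(n-i_0)+(n-i_0)$ is the same computation the paper performs in display form (and records immediately after the lemma), so this is a reorganization rather than a different route.
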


Using \Cref{periodic}, one can show $B_k(n+i_0) = B_k(n)+n+B_k(i_0)$. Iterating this shows
\begin{align*}
     B_k(n+Ni_0) = B_k(n)+nN+NB_k(i_0) + \frac{N(N-1)i_0}{2}
\end{align*}
for $1<n\leq i_0$ and $N\geq 0$. In particular, $B_k(n)=\Theta(\frac{1}{i_0}N^2)$. 

\begin{proof}
First note that if $\mu^{\max}(i)=m+\frac{k-1}{k}$ for $m$ an integer, then 
\begin{align}
\label{eq:k1k}
    \mu^{\max}(i+1)&=\max_j \{\frac{\lfloor(\mu^{\max}(j)+\frac{1}{k})(i+1-j)\rfloor}{i+1-j}\mid 0<j\leq i\}\\
    &= \lfloor m+\frac{k-1}{k}+\frac{1}{k}\rfloor = m+1.
\end{align}
In particular, there is a unique $i_0$ for which $\mu^{\max}(i_0)=\frac{k-1}{k}$ and $\mu^{\max}(i_0+1)=1=1+\mu^{\max}(1)$. 

We will now show $\mu^{\max}(n) = \mu^{\max}(n-i_0) + 1$ for all $n\geq i_0$ using induction on $n$. For the case $n=i_0+1$, $\mu^{\max}(i_0+1)=1$ from above. 

Now suppose $n>i_0+1$.  We first note that $$\mu^{\max}(i_0 \lfloor \frac{n-1}{i_0}\rfloor)=(\lfloor\frac{n-1}{i_0}\rfloor-1)+\frac{k-1}{k}$$ is $\frac{k-1}{k}$ by induction.

Now, we claim that $\mu^{\max}(i)<\mu^{\max}(i_0 \lfloor \frac{n-1}{i_0}\rfloor)$ for all $i<i_0 \lfloor \frac{n-1}{i_0}\rfloor$. Since $\mu^{\max}(j)$ is weakly increasing in $j$, the point is to prove they are not equal. If $\mu^{\max}(i)$ was equal to $\mu^{\max}(i_0 \lfloor \frac{n-1}{i_0}\rfloor)$, then $\mu^{\max}(i+1)= \mu^{\max}(i)$, which contradicts \eqref{eq:k1k}. 

By Lemma \ref{lem:muMaxComputation}, $\mu^{\max}(n)$ will be determined by the $i$ between $0$ and $n$ such that $\frac{\lfloor(\mu^{\max}(i)+\frac{1}{k})(n-i)\rfloor}{n-i}$ is maximized. Let $i(n)$ be this $i$. We claim $i(n) \geq i_0\lfloor \frac{n-1}{i_0}\rfloor$. To get a contradiction, suppose that $i(n) <i_0 \lfloor \frac{n-1}{i_0}\rfloor$.

Since we have shown above that $\mu^{\max}(i(n))<\mu^{\max}(i_0 \lfloor \frac{n-1}{i_0} \rfloor)$, $\mu^{\max}(i(n))+\frac{1}{k}< \mu^{\max}(i_0 \lfloor \frac{n-1}{i_0} \rfloor)+\frac{1}{k}=\lfloor \frac{n-1}{i_0}\rfloor$, contradicting $i(n) <i_0 \lfloor \frac{n-1}{i_0}\rfloor$.

Since $i(n) \geq i_0\lfloor \frac{n}{i_0}\rfloor$, 
\begin{align*}
   \mu^{\max}(n) &=  \frac{\lfloor(\mu^{\max}(i(n))+\frac{1}{k})(n-i(n))\rfloor}{n-i(n)}\\
   &= \frac{\lfloor(\mu^{\max}(i(n)-i_0)+1+\frac{1}{k})(n-i(n))\rfloor}{n-i(n)}\qquad\text{by induction}\\
    &= \frac{\lfloor(\mu^{\max}(i(n)-i_0)+\frac{1}{k})((n-i_0)-(i(n)-i_0))\rfloor}{(n-i_0)-(i(n)-i_0)}+1\\
    &= \max_j \left\{\frac{\lfloor(\mu^{\max}(j)+\frac{1}{k})(n-i_0-j)\rfloor}{n-i_0-j}\mid 0<j\leq n-i_0\right\}+1\qquad\text{by induction}\\
   &= \mu^{\max}(n-i_0)+1\qquad\text{by definition}
\end{align*}
This concludes our induction for $\mu^{\max}$. From our proof, we also see that
\begin{align}
\label{eq:i(n)}
    i(n)-i_0 &= i(n-i_0). 
\end{align}

Next, we want to show the statement regarding $B_k(n)$. It suffices to show that $B_k^{\low}(n)=B_k^{\up}(n)$ for all $n$. We will show this by induction and can assume $n> 3i_0$ and $B_k^{\low}(i)=B_k^{\up}(i)$ for all $0<i<n$. As before, let $i(n)$ be the minimum $i$ that maximizes $\frac{\lfloor(\mu^{\max}(i)+\frac{1}{k})(n-i)\rfloor}{n-i}$. 
By definition, we want to show
\begin{align}
\label{eq:Bkeq}
    B_k(i(n))+\lfloor(\mu^{\max}(i(n))+\frac{1}{k})(n-i(n))\rfloor&= \max\{B_k(i)+\lfloor(\mu^{\max}(i)+\frac{1}{k})(n-i)\rfloor\mid 0<i<n\}.
\end{align}
The inequality $\leq$ is clear as the left side is one of the terms on the right side. Let $i'$ be an index maximizing the right side. We want to show that $i'> i_0$. If $i'\leq i_0$, then $B^{\up}_k(i')+\lfloor(\mu^{\max}(i')+\frac{1}{k})(n-i')\rfloor$ is less than $n$ as $\mu^{\max}_{i'}+\frac{1}{k}\leq 1$ and $B^{\up}(i')\leq \mu^{\max}(i')i'<i'$. We also crudely bound $B_k^{\low}(n)$ from below. 

To do so, we first bound $B_k^{\low}(3i_0)$ by $3i_0$. From the statement of \Cref{periodic} regarding $\mu^{\max}$, we know $\mu^{\max}(j)\geq m$ for all $i>m\cdot i_0$. Then, 
\begin{align*}
    B_k^{\low}(n)&\geq 0\cdot i_0+1\cdot i_0+2\cdot i_0+3(n-3i_0).
\end{align*}
 Since $n>3i_0$, $3i_0 + 3(n-3i_0)=2(n-3i_0)+n>n$, yielding a contradiction.
 
 Since $i'>i_0$, the right side of \eqref{eq:Bkeq} is
 \begin{align*}
     \max\{B_k(i)+\lfloor(\mu^{\max}(i)+\frac{1}{k})(n-i)\rfloor\mid 0<i<n\}&=\\
     \max\{B_k(i)+\lfloor(\mu^{\max}(i)+\frac{1}{k})(n-i)\rfloor\mid i_0<i<n\}&=\\
     \max\{B_k(i+i_0)+\lfloor(\mu^{\max}(i+i_0)+\frac{1}{k})(n-i-i_0)\rfloor\mid 0<i<n-i_0\}&=\\
    \max\{B_k(i)+i+B_k(i_0)+\lfloor(\mu^{\max}(i)+1+\frac{1}{k})(n-i-i_0)\rfloor\mid 0<i<n-i_0\}&=\\
    \max\{B_k(i)+\lfloor(\mu^{\max}(i)+\frac{1}{k})(n-i-i_0)\rfloor\mid 0<i<n-i_0\}+n-i_0+B_k(i_0)&.
 \end{align*}
 But $\max\{B_k(i)+\lfloor(\mu^{\max}(i)+\frac{1}{k})(n-i-i_0)\rfloor\mid 0<i<n-i_0\}=B(n-i_0)$ by induction. Looking at the left side of \eqref{eq:Bkeq}, we get
 \begin{align*}
     B_k(i(n))+\lfloor(\mu^{\max}(i(n))+\frac{1}{k})(n-i(n))\rfloor&=\\
     B_k(i(n)-i_0)+(i(n)-i_0)+B_k(i_0)+\lfloor(\mu^{\max}(i(n)-i_0+i_0)+\frac{1}{k})(n-i(n))\rfloor&=\\
     B_k(i(n)-i_0)+(i(n)-i_0)+B_k(i_0)+\lfloor(\mu^{\max}(i(n)-i_0)+1+\frac{1}{k})(n-i(n))\rfloor&=\\
    B_k(i(n)-i_0)+\lfloor(\mu^{\max}(i(n)-i_0)+\frac{1}{k})(n-i_0-(i(n)-i_0)\rfloor+n-i_0+B_k(i_0)&=\\
    B_k(n-i_0)+B_k(i_0)+n-i_0&.\qquad\text{by \eqref{eq:i(n)}}
 \end{align*}
 Therefore, both sides of \eqref{eq:Bkeq} are equal, which is what we wanted. 
\end{proof}

We can verify the conditions of \Cref{periodic} using a Python program for small $k$. For example, the answer for $k=2,3,4,5$ are given below. 
\begin{corollary}
\label{cor:manyquadrics}
We have the following closed-form expressions for $B_k(n)$ for $k=2,3,4,5$. For $k=2$ and $n\geq 0$
\begin{align*}
    B_2(3n+1)&=\frac{3n^2+n}{2} \quad
    B_2(3n+2)=\frac{3n^2+3n}{2} \quad
    B_2(3n+3)=\frac{3n^2+5n+2}{2}.
\end{align*}
For $k=3$,
\begin{align*}
    B_3(7n+1)&=\frac{7n^2+n}{2}\quad
    B_3(7n+2)=\frac{7n^2+3n}{2}\quad
    B_3(7n+3)=\frac{7n^2+5n}{2}\\
    B_3(7n+4)&=\frac{7n^2+7n+2}{2}\quad
    B_3(7n+5)=\frac{7n^2+9n+2}{2}\quad
    B_3(7n+6)=\frac{7n^2+11n+4}{2}\\
    B_3(7n+7)&=\frac{7n^2+13n+6}{2}.
\end{align*}
For $k=4$,
\begin{align*}
    B_4(11n+1)&=\frac{11n^2+n}{2}\quad
    B_4(11n+2)=\frac{11n^2+3n}{2}\quad
    B_4(11n+3)=\frac{11n^2+5n}{2}\\
    B_4(11n+4)&=\frac{11n^2+7n}{2}\quad
    B_4(11n+5)=\frac{11n^2+9n+2}{2}\quad
    B_4(11n+6)=\frac{11n^2+11n+2}{2}\\
    B_4(11n+7)&=\frac{11n^2+13n+4}{2}\quad
    B_4(11n+8)=\frac{11n^2+15n+4}{2}\quad
    B_4(11n+9)=\frac{11n^2+17n+6}{2}\\
    B_4(11n+10)&=\frac{11n^2+19n+8}{2}\quad
    B_4(11n+11)=\frac{11n^2+21n+10}{2}.\quad
\end{align*}
For $k=5$,
\begin{align*}
    B_5(19n+1)&=\frac{19n^2+n}{2}\quad
    B_5(19n+2)=\frac{19n^2+3n}{2}\quad
    B_5(19n+3)=\frac{19n^2+5n}{2}\\
    B_5(19n+4)&=\frac{19n^2+7n}{2}\quad
    B_5(19n+5)=\frac{19n^2+9n}{2}\quad
    B_5(19n+6)=\frac{19n^2+11n+2}{2}\\
    B_5(19n+7)&=\frac{19n^2+13n+2}{2}\quad
    B_5(19n+8)=\frac{19n^2+15n+2}{2}\quad
    B_5(19n+9)=\frac{19n^2+17n+4}{2}\\
    B_5(19n+10)&=\frac{19n^2+19n+4}{2}\quad
    B_5(19n+11)=\frac{19n^2+21n+6}{2}\quad
    B_5(19n+12)=\frac{19n^2+23n+6}{2}\\
    B_5(19n+13)&=\frac{19n^2+25n+8}{2}\quad
    B_5(19n+14)=\frac{19n^2+27n+10}{2}\quad
    B_5(19n+15)=\frac{19n^2+29n+10}{2}\\
    B_5(19n+16)&=\frac{19n^2+31n+12}{2}\quad
    B_5(19n+17)=\frac{19n^2+33n+14}{2}\quad
    B_5(19n+18)=\frac{19n^2+35n+16}{2}\\
    B_5(19n+19)&=\frac{19n^2+37n+18}{2}.
\end{align*}
\end{corollary}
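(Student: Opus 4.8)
The statement to prove is \Cref{cor:manyquadrics}, giving closed-form quadratic expressions for $B_k(n)$ for $k=2,3,4,5$.

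\medskip

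The plan is to reduce the computation to a finite verification via \Cref{periodic}. First I would observe that by \Cref{lem:muMaxComputation}, both $\mu^{\max}(n)$ and the auxiliary quantities $B_k^{\up}(n)$, $B_k^{\low}(n)$ are computed by explicit recursions that only reference smaller values of $n$; hence for each fixed $k$ these can be tabulated by a finite computation up to any desired bound. The key structural input is \Cref{periodic}: if one can exhibit an index $i_0$ with $\mu^{\max}(i_0) = \frac{k-1}{k}$, and if one verifies $B_k^{\up}(i) = B_k^{\low}(i)$ for all $i \le 3i_0$, then $B_k^{\low}(n) = B_k(n) = B_k^{\up}(n)$ for \emph{all} $n$, and moreover (as noted after \Cref{periodic}) the recursion $B_k(n+i_0) = B_k(n) + n + B_k(i_0)$ holds. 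The values of $i_0$ are exactly the periods appearing in the statement: $i_0 = 3$ for $k=2$, $i_0 = 7$ for $k=3$, $i_0 = 11$ for $k=4$, $i_0 = 19$ for $k=5$.

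\medskip

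Next I would carry out, for each $k \in \{2,3,4,5\}$, the following finite checks on a computer: (i) compute $\mu^{\max}(n)$ for $n \le 3i_0$ using \Cref{lem:muMaxComputation} and confirm $\mu^{\max}(i_0) = \frac{k-1}{k}$; (ii) compute $B_k^{\up}(n)$ and $B_k^{\low}(n)$ for $n \le 3i_0$ from their recursions and confirm they agree throughout this range. By \Cref{periodic} this establishes $B_k(n) = B_k^{\up}(n) = B_k^{\low}(n)$ for all $n$ together with the periodicity relation. From the iterated relation displayed after \Cref{periodic},
\begin{align*}
    B_k(n + N i_0) = B_k(n) + nN + N B_k(i_0) + \frac{N(N-1) i_0}{2},
\end{align*}
valid for $1 < n \le i_0$, the function $B_k$ on each residue class modulo $i_0$ is a quadratic polynomial in $N$; substituting $N = (m-n)/i_0$ for $m \equiv n \pmod{i_0}$ rewrites this as a quadratic in $m$. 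The base values $B_k(n)$ for $n \le i_0$ (equivalently $n \le 2i_0$, to pin down each quadratic from three data points) are read off from the same table, and one checks that the resulting polynomials are precisely those listed in the statement. This last step is a routine interpolation: three consecutive values in a fixed residue class determine the degree-$\le 2$ polynomial, and one verifies consistency against further tabulated values.

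\medskip

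The main obstacle is purely computational rather than conceptual: one must actually verify the hypotheses of \Cref{periodic} for each $k$, which requires correctly implementing the (somewhat delicate) recursions for $\mu^{\max}$, $B_k^{\up}$, and $B_k^{\low}$ and running them out to $n = 3i_0$ (at most $n = 57$ for $k = 5$). Everything downstream of that verification is forced. The only genuine mathematical content beyond \Cref{periodic} is the bookkeeping that translates the recursion $B_k(n+i_0) = B_k(n) + n + B_k(i_0)$ into the stated closed forms; I would present this translation once in general (as already sketched after \Cref{periodic}) and then simply list the $k=2,3,4,5$ outputs, with the underlying table of small values recorded for reproducibility.
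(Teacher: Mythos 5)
Your proposal is correct and follows essentially the same route as the paper: the authors likewise verify the hypotheses of \Cref{periodic} (with $i_0=3,7,11,19$ for $k=2,3,4,5$) by a finite computer computation of $\mu^{\max}$, $B_k^{\up}$, and $B_k^{\low}$, and then extract the quadratic closed forms on each residue class modulo $i_0$ from the iterated relation $B_k(n+Ni_0)=B_k(n)+nN+NB_k(i_0)+\frac{N(N-1)i_0}{2}$ by interpolation against the tabulated small values. No gap to report.
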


\bibliographystyle{alpha} \bibliography{references.bib}

\end{document}